\newcommand{\R}{\mathbb R}
\newcommand{\Q}{\mathbb Q}
\newcommand{\Z}{\mathbb Z} 
\newcommand{\hf}{\widehat{HF}}
\newcommand{\spc}{{\rm Spin}^c}
\newcommand{\leg}{\mathcal{L}} 	
\newcommand{\knot}{K}  			
\newtheorem{theorem}{Theorem}[section]
\newtheorem{lemma}[theorem]{Lemma}
\newtheorem{cor}[theorem]{Corollary}
\newtheorem{prop}[theorem]{Proposition}
\newtheorem{conj}[theorem]{Conjecture}
\theoremstyle{definition}
\newtheorem{dfn}[theorem]{Definition}
\begin{document}
\title[Tight contact structures on $- \Sigma(2,3,6n-1)$]{Tight contact structures on the 
Brieskorn spheres $- \Sigma(2,3,6n-1)$ and contact invariants}

\author{Paolo Ghiggini}
\address{CNRS, Universit\'e de Nantes, FRANCE}
\email{paolo.ghiggini@univ-nantes.fr}

\author{Jeremy Van Horn-Morris}
\address{American Institute of Mathematics, Palo Alto, CA}
\email{jvanhorn@aimath.org}


\begin{abstract}We compute the Ozsv\'ath--Szab\'o contact invariants for all tight 
contact structures on the manifolds $- \Sigma(2,3,6n-1)$ using twisted coefficient
and a previous computation by the first author and Ko Honda. This computation 
completes the classification of the tight contact structures in this family of $3$-manifolds.
\end{abstract}
\maketitle

\section{Introduction}
The family of $3$-manifolds $- \Sigma(2,3,6n-1)$ defined by the surgery diagram
in Figure \ref{topsurg.fig} has been an exciting playground for contact 
topologists for many years, and any progress in the knowledge of the tight contact 
structures in this family has led us to progress in our understanding of 
three-dimensional contact topology.

These manifolds first were used by Lisca and Mati\'c in \cite{lisca-matic:1} to give 
an example of the power of the recently discovered Seiberg--Witten invariants
in distinguishing tight contact structures. Later Etnyre and Honda 
\cite{etnyre-honda:1} proved that $-\Sigma (2,3,5)$ supports no tight
contact structure, giving the first example of such a manifold. Tight contact structures 
on $-\Sigma (2,3,17)$ were instrumental in the first vanishing theorem for the 
Ozsv\'ath--Szab\'o contact invariant in \cite{ghiggini:3}. Finally the first author proved 
in \cite{ghiggini:nonstein} that $- \Sigma(2,3,6n-1)$ carries a
strongly fillable contact structure which is not Stein fillable when 
$n \ge 3$, thus showing that strong and Stein fillability are different concepts in dimension
three. 

The goal of this paper is to give a complete classification of tight contact structures
on manifolds in this family, and to do that we will compute their Ozsv\'ath--Szab\'o 
contact invariants. The proof is a delicate computation using Heegaard Floer homology 
with twisted coefficients.

 \begin{figure} \centering
 \psfrag{0}{$0$}
 \psfrag{a}{$-n$}
 \includegraphics[width=4cm]{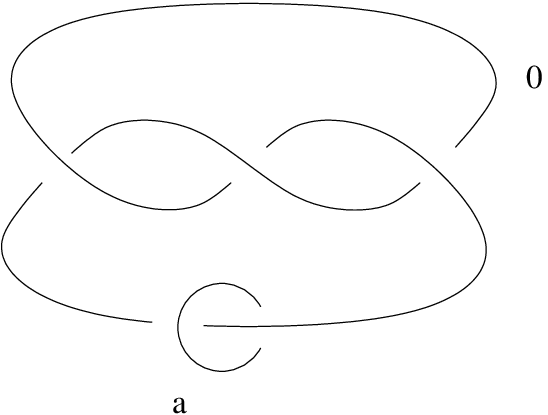}
 \caption{Surgery diagram for $- \Sigma(2,3,6n-1)$}
\label{topsurg.fig}
 \end{figure}

It has been known for a while that $- \Sigma(2,3,6n-1)$ supports at most 
$\frac{n(n-1)}{2}$ distinct contact structures up to isotopy. We will denote 
them by $\eta_{i,j}^n$ where $0 \leq i \leq n-2$ and $-n+i+2 \leq j \leq n-i-2$ 
with $j \equiv n-i$  (mod $2$). The geometric meaning of the indices $i$ and 
$j$ will be explained 
in the next section. In order to simplify the exposition we define the 
following notation for the sets of indices of the contact structures $\eta_{i,j}^n$:
\begin{dfn}
For any $n \geq 2$ we define
\[ {\mathcal P}_n = \left \{ (i,j) \in \Z \times \Z : \begin{array}{l} 0 \leq i \leq n-2, \\ 
|j| \leq n-i-2 \text{ with } j \equiv n-i \; ({\rm mod}\, 2) \end{array} \right
\}. \]
\end{dfn}
We can visualize ${\mathcal P}_n$ (and the contact structures indexed by its
elements) as a
triangle with $n-1$ rows and $(n-2,0)$ at its upper vertex. 
For example for $n=5$ we have:

\begin{equation}\label{pascal}
\begin{matrix}
     &      &      &\eta_{3,0\phantom{-}} ^5 &     &     &     \\
     &      & \eta_{2,-1}^5 &     & \eta_{2,1\phantom{-}}^5 &     &     \\
     & \eta_{1,-2}^5 &      & \eta_{1,0\phantom{-}}^5 &     & \eta_{1,2\phantom{-}}^5 &     \\
\eta_{0,-3}^5 &   & \eta_{0,-1}^5 &   & \eta_{0,1\phantom{-}}^5 &   & \eta_{0,3\phantom{-}}^5 \\
\end{matrix}
\end{equation}

For any $n$, the contact structures on the bottom row (i.e. those with $i=0$) 
are obtained by Legendrian surgery on all possible Legendrian realizations of 
the link in Figure \ref{topsurg.fig} (see Figure \ref{legsurg.fig}), and therefore 
are Stein fillable. All other contact structures are strongly symplectically fillable, 
and the top one (i.e. $\eta_{n-2.0}^n$) is known not to be Stein fillable by 
\cite{ghiggini:nonstein}. No Stein filling is known for $\eta^n_{i,j}$
when $i>0$. Therefore we conjecture the following:
 
\begin{conj}
The contact structures $\eta_{i,j}^n$ are not Stein fillable if $i>0$.
\end{conj}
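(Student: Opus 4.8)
The plan is to argue by contradiction: assume that $(-\Sigma(2,3,6n-1),\eta^n_{i,j})$ with $i>0$ admits a Stein filling $W$ and derive a contradiction, either by classifying all possible $W$ or by extracting an obstruction strictly finer than the contact invariant itself. Note at the outset that the bare contact class cannot do the job here: every $\eta^n_{i,j}$ with $(i,j)\in{\mathcal P}_n$ is strongly symplectically fillable, so $c(\eta^n_{i,j})\neq 0$ for all $(i,j)$, and on a homology sphere there is no twisting to refine this — hence the bottleneck is precisely to separate strong from Stein fillability. The first step is therefore to pin down the topology of $W$. Since $-\Sigma(2,3,6n-1)$ is a homology sphere, $H_1(W;\Q)=0$; using the $d$-invariant inequality for negative-definite fillings, Donaldson's diagonalization theorem applied to the closed negative-definite manifold obtained by capping $W$ with the canonical negative-definite plumbing $X_n$ bounded by $\Sigma(2,3,6n-1)$, and the known bounds on Stein fillings of small Seifert fibered spaces, one expects to conclude that $W$ is negative definite with $H_1(W;\Z)=0$, so that $c_1(W,\eta^n_{i,j})$ is torsion and $b_2(W)=-\sigma(W)$ is controlled.

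The second step is the matching. Capping $W$ with $X_n$ realizes the lattice $(H_2(W),Q_W)$ as the orthogonal complement of the explicit plumbing lattice $Q_{X_n}$ inside a standard negative-definite lattice, and a Lisca-type enumeration then lists the finitely many admissible intersection forms $Q_W$. For each survivor one reconstructs the Stein handlebody, reads off $c_1(W,\eta^n_{i,j})$ from the rotation numbers of the attaching Legendrian knots, and computes the induced map on Heegaard Floer homology, under which — by functoriality of the contact invariant — $c(\eta^n_{i,j})$ must be carried to the generator of $\hf(S^3)$. Comparing this with the description of $c(\eta^n_{i,j})$ obtained in the body of the present paper — which places it in a definite Maslov grading and records its interaction with the $U$-action and with the twisted-coefficient refinements — should eliminate every candidate filling except those realizing the bottom row $i=0$, i.e.\ Legendrian surgery on the Legendrian links of Figure~\ref{legsurg.fig}. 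This leaves $\eta^n_{i,j}$ with $i>0$ with no Stein filling; the apex case $\eta^n_{n-2,0}$ of \cite{ghiggini:nonstein} is recovered as the instance where the lattice enumeration already admits no valid $Q_W$ compatible with the Stein data.

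The main obstacle is making this uniform in $n$: the number of lattice embeddings of $Q_{X_n}$ grows, and what one really wants is a structural statement — that the locus of ${\mathcal P}_n$ carrying Stein fillings is exactly its bottom edge — rather than an $n$-by-$n$ verification, and the hardest part of step two is that narrowing the lattice is not enough (the apex admits negative-definite fillings topologically; only the contact-structure-specific Floer data rules them out), so the grading/$U$-module comparison must be done with care. Here I would also try to bypass the classification of fillings altogether and instead exploit the genus-one, one-boundary open book descriptions of the $\eta^n_{i,j}$ (in the spirit of the work of Van~Horn-Morris and Baldwin on this family): for $i>0$ one would show that no supporting genus-one open book has monodromy a product of positive Dehn twists, and then invoke a rigidity theorem for contact structures supported by genus-one open books in the spirit of Wendl's classification of fillings in the planar case. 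The delicate point — and the reason this is only a conjecture — is that such rigidity is false for general open books (a contact structure has many supporting open books, only some positively factorizable), so one must genuinely use the very restricted shape of the genus-one open books occurring on $-\Sigma(2,3,6n-1)$, and it is plausible that establishing the needed rigidity itself consumes the twisted Heegaard Floer input developed here.
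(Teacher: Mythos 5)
This statement is stated in the paper as a \emph{conjecture}: the authors prove it only in the apex case $\eta^n_{n-2,0}$, by citing \cite{ghiggini:nonstein}, and explicitly leave the cases $0<i<n-2$ open. What you have written is not a proof but a research program, and you in effect concede this yourself (``one expects to conclude'', ``should eliminate every candidate filling'', ``the reason this is only a conjecture''). Concretely, the gaps are these. First, negative definiteness and the homological control of an arbitrary Stein filling $W$ of $(-\Sigma(2,3,6n-1),\eta^n_{i,j})$ are asserted, not established; nothing in the $d$-invariant or Donaldson diagonalization package by itself pins down $b_2^+(W)=0$ or $H_1(W)=0$ for all Stein fillings here, and no uniform-in-$n$ statement is proved. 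Second, and more seriously, the passage from a lattice-theoretic list of admissible intersection forms to contact-geometric conclusions is circular as written: from a candidate form $Q_W$ you cannot ``reconstruct the Stein handlebody, read off $c_1(W,\eta^n_{i,j})$ from the rotation numbers of the attaching Legendrian knots'' --- that presumes the filling is a standard Legendrian-surgery handlebody, which is exactly what is unknown. The only Floer-theoretic constraint you actually have available is that $F_{\overline W,\mathfrak{k}}$ carries $c(\eta^n_{i,j})$ to the generator of $\hf(S^3)$, and the present paper's computation (Theorem \ref{main}, together with Lemma \ref{hf2}) places all the invariants $c(\eta^n_{i,j})$, $i>0$, in the same grading and the same $\spc$-structure as the Stein-fillable bottom row, as explicit integer combinations of the $c(\eta^n_{0,j})$; you give no argument that such an element cannot be hit in this way, and the known proof for the apex uses additional structure (the conjugation symmetry of Theorem \ref{coniugazione} and the specific shape of $c(\eta^n_{n-2,0})$), which does not transparently extend to the interior of ${\mathcal P}_n$.

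Your fallback route is also unsupported: a Wendl-type rigidity theorem for genus-one open books does not exist, and, as you note, cannot exist in the naive form --- the very phenomenon at issue (strongly fillable, conjecturally non-Stein-fillable structures supported by the genus-one open books of Figure \ref{fig:books}) lives precisely where such rigidity breaks down. Moreover, even granting rigidity, you would still need to prove that \emph{no} supporting genus-one open book of $\eta^n_{i,j}$, $i>0$, admits a positive factorization, which is a statement about all open books supporting the contact structure, not just the ones constructed in Section \ref{construction}; you offer no mechanism for that. In short, the proposal identifies reasonable lines of attack but proves nothing beyond what is already in the literature, and the statement remains, as in the paper, a conjecture.
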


Now we can state the main result of this article:
\begin{theorem}\label{main}
Let $c(\eta^n_{i,j})$ denote the Ozsv\'ath--Szab\'o contact invariant of $\eta^n_{i,j}$.
We can choose representatives for $c(\eta^n_{0,j})$ such that, for any 
$(i,j) \in {\mathcal P}_n$, the contact invariant of $\eta_{i,j}^n$ is computed by 
the formula:
\begin{equation} \label{f:main}
 c(\eta_{i,j}^n)= \sum \limits_{k=0}^i (-1)^k  \binom{i}{k} c(\eta_{0,j-i+2k}^n).
\end{equation}
\end{theorem}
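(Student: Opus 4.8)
The plan is to realize the contact structures $\eta^n_{i,j}$ as contact surgeries and then track the contact invariant through the associated maps on Heegaard Floer homology with twisted coefficients. The key geometric input is the description, recalled in the next section, of each $\eta^n_{i,j}$ as obtained from a Stein fillable one by a sequence of Legendrian surgeries along stabilized unknots, or equivalently by attaching Weinstein $2$-handles; the index $i$ counts how many ``extra'' handles one attaches relative to the bottom row. I would first write down explicitly, for each $(i,j)\in\mathcal P_n$, a cobordism $W_{i,j}$ from $-\Sigma(2,3,6n-1)$ (with $\eta^n_{i,j}$) to $-\Sigma(2,3,6n-1)$ (with $\eta^n_{0,j'}$ for an appropriate $j'$), built from the surgery picture in Figure~\ref{legsurg.fig}. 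By naturality of the contact invariant under Stein (or Weinstein) cobordisms — the Ozsv\'ath–Szab\'o/Lisca–Stipsicz functoriality statement — the induced map sends $c(\eta^n_{i,j})$ to $\pm c(\eta^n_{0,j'})$, and more importantly the reverse cobordism lets one express $c(\eta^n_{i,j})$ in terms of the bottom row.

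**The inductive step.**

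The heart of the argument is an induction on $i$. For the inductive step I would isolate a single Legendrian stabilization/handle move: passing from $\eta^n_{i,j}$ to $\eta^n_{i-1,j\pm 1}$ is effected by a Legendrian surgery on an unknot, and the two choices of stabilization (zig-zag up vs.\ down) give the two neighbours $\eta^n_{i-1,j-1}$ and $\eta^n_{i-1,j+1}$ in the triangle~\eqref{pascal}. The corresponding $2$-handle cobordism fits into a surgery exact triangle, and the contact class behaves additively: the class $c(\eta^n_{i,j})$ maps to the \emph{difference} (up to sign) of the images of $c(\eta^n_{i-1,j-1})$ and $c(\eta^n_{i-1,j+1})$ under the relevant maps. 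Unwinding this relation and feeding in the inductive hypothesis
\[
c(\eta^n_{i-1,j\pm1})=\sum_{k=0}^{i-1}(-1)^k\binom{i-1}{k}c(\eta^n_{0,(j\pm1)-(i-1)+2k}),
\]
then combining the two sums with the Pascal identity $\binom{i-1}{k-1}+\binom{i-1}{k}=\binom{i}{k}$, produces exactly the alternating binomial sum \eqref{f:main}. The choice of representatives for $c(\eta^n_{0,j})$ referred to in the statement is fixed at the base case $i=0$ and is precisely what absorbs the sign ambiguities inherent in the contact-invariant functoriality, so these must be pinned down once and carried consistently through the induction.

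**The main obstacle.**

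The hard part will be controlling signs and the twisted coefficients simultaneously. Over $\Z$ (or over the Novikov-type ring used for twisted coefficients) the maps in the surgery triangle and the functoriality maps are only defined up to an overall sign, and a naive induction would only give \eqref{f:main} up to a sign depending on $(i,j)$; getting the \emph{uniform} sign $(-1)^k\binom{i}{k}$ requires a careful bookkeeping of orientations of the handle attachments and compatibility of the chosen generators along the whole triangle. This is where the twisted-coefficient refinement and the earlier Ghiggini–Honda computation enter: the twisted theory is needed both because the relevant $\mathrm{Spin}^c$ structures have $b_1>0$ pieces appearing in the surgery formula and because it is what makes the contact classes nonzero and hence the relation \eqref{f:main} non-vacuous. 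I would therefore expect the bulk of the work to be (i) a precise lemma computing the effect of a single stabilization move on the twisted contact class, with signs, and (ii) verifying that the cobordism maps respect the module structure over the coefficient ring in the way needed to iterate. Once those are in hand, the Pascal-triangle combinatorics that yields \eqref{f:main} is routine.
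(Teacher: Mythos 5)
Your overall plan---realize the $\eta^n_{i,j}$ by Legendrian surgeries, push contact classes through the induced maps, and finish with the Pascal identity---resembles the paper only superficially, and the central step of your induction is asserted rather than proved. You claim that passing from $\eta^n_{i,j}$ to its neighbours $\eta^n_{i-1,j\pm1}$ is effected by a Legendrian surgery on an unknot and that, in the associated surgery exact triangle, the contact class ``behaves additively,'' so that $c(\eta^n_{i,j})$ is (up to sign) the difference of the two neighbours' classes. First, the geometric picture is wrong: $\eta^n_{i,j}$ and $\eta^n_{i-1,j\pm1}$ live on the \emph{same} manifold $Y_n$, and no Legendrian surgery relates them; the surgeries actually available (Proposition \ref{prop:surgery}) relate $(Y_\infty,\xi_{i+1})$, $(Y_{n+1},\eta^{n+1}_{i+1,j})$ and $(Y_n,\eta^n_{i,j})$, so a single handle attachment shifts both $i$ and $n$, and the paper's induction is on $n$, transporting invariants by the Stein cobordism map $F_{W_n}(c(\eta^n_{i,j}))=c(\eta^{n+1}_{i+1,j})$. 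Second, and more seriously, there is no ``additivity of the contact class in a surgery triangle'' to invoke: the identity $c(\eta^{n+1}_{1,j})=c(\eta^{n+1}_{0,j+1})-c(\eta^{n+1}_{0,j-1})$ is precisely the nontrivial content of the theorem (the case $i=1$), from which the general case follows by the routine binomial manipulation you describe; taking it as an input makes your induction circular at its key step.

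In the paper this relation is extracted homologically, not from an exact triangle. The composite cobordism from $Y_\infty$ to $Y_n$ is decomposed in two ways, $W_\infty\cup_{Y_\infty} V_n$ versus $V_{n+1}\cup_{Y_{n+1}} W_n$, giving the commutative square (\ref{diagramma}) of maps into $\underline{\hf}(-Y_\infty)$ with twisted coefficients. The role of twisting is not the sign/${\rm Spin}^c$ bookkeeping you describe: the $Y_n$ are integer homology spheres, but the intermediate manifold $Y_\infty$ has $b_1=1$ and the \emph{untwisted} invariants of the torsion contact structures $\xi_i$, $i\ge 1$, vanish there, so an untwisted argument through $Y_\infty$ carries no information. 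One needs the Ghiggini--Honda computation $[c(\xi_{i+1})]=[(t-1)\,c(\xi_i)]$ to see that $F_{W_\infty}$ is multiplication by $(t-1)$ up to a unit, a conjugation-symmetry argument (invariance of $\eta^n_{n-2,0}$ under conjugation) to pin the unit down to $t^{1/2}-t^{-1/2}$, Lemma \ref{pluto} giving $F_{V_n}(c(\eta^n_{0,j}))=t^{j/2}$ via the twisted composition law and Plamenevskaya's result, and the resulting injectivity of $F_{V_{n+1}}$ on the span of the bottom-row classes to pull the relation back from $\underline{\hf}(-Y_\infty)$ to $\hf(-Y_{n+1})$. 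None of these ingredients appears in your outline, so the inductive step is a genuine gap rather than a deferred technicality.
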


We can reformulate Theorem \ref{main} in plain English as follows.
Any $(i,j) \in {\mathcal P}_n$ determines a sub-triangle ${\mathcal P}_n(i, j) \subset 
{\mathcal P}_n$ with top vertex at $(i,j)$ defined as
\[ {\mathcal P}_n(i, j)= \left \{ (k,l) \in {\mathcal P}_n : 0 \leq k \leq i 
\quad \text{and} \quad j - k \leq l \leq j+k \right \}. \]
The contact invariant of $\eta_{i,j}^n$ is then a linear 
combination of the invariants of the contact structures parametrized
by the pairs in the base of ${\mathcal P}_n(i, j)$. In order to compute the coefficients 
we associate natural numbers to the elements of ${\mathcal P}_n(i, j)$, starting
by associating $1$ to the vertex $(i,j)$, and going downward following the rule
of the Pascal triangle. Then the numbers associated to the elements in the
bottom row, taken with alternating signs, are the coefficients of the contact
invariants of the corresponding contact structures in the sum in Equation 
(\ref{f:main}).

Olga Plamenevskaya proved in \cite{plam:1} that the contact invariants of the
contact structures parametrized by the elements in the bottom row of 
${\mathcal P}_n$ (i.e. those with $i=0$) are linearly independent, so all
$\eta_{i,j}^n$ have distinct contact invariants. Thus we have the following corollary:
\begin{cor}\label{classification}
$- \Sigma (2,3,6n-1)$ admits exactly $\frac{n(n-1)}{2}$ distinct isotopy classes 
of tight contact structures with nonzero  and pairwise distinct Ozsv\'ath--Szab\'o
contact invariants.
\end{cor}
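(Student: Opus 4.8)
The plan is to feed the formula of Theorem~\ref{main} into Plamenevskaya's linear independence statement, thereby reducing the comparison of the invariants $c(\eta^n_{i,j})$ to an elementary fact about their coefficient vectors, and then to combine this with the already-known upper bound on the number of tight contact structures.

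First, a few preliminaries. All of the $\eta^n_{i,j}$ are tight, being strongly symplectically fillable (as recalled in the introduction), and it is known that up to isotopy these are the only tight contact structures on $-\Sigma(2,3,6n-1)$, with at most $\#\mathcal P_n = \frac{n(n-1)}{2}$ of them. Since the Ozsv\'ath--Szab\'o contact invariant is an isotopy invariant, the corollary follows once we show that the $c(\eta^n_{i,j})$ are non-zero and pairwise distinct: non-vanishing gives the first clause; pairwise distinctness forces the $\eta^n_{i,j}$ to be pairwise non-isotopic, and together with the upper bound this yields exactly $\frac{n(n-1)}{2}$ isotopy classes of tight contact structures.

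Now fix the representatives of the bottom-row invariants $c(\eta^n_{0,l})$ furnished by Theorem~\ref{main}, write $e_l = c(\eta^n_{0,l})$, and for $(i,j) \in \mathcal P_n$ set
\[ v_{i,j} = \sum_{k=0}^{i} (-1)^k \binom{i}{k}\, e_{j-i+2k}, \]
so that $c(\eta^n_{i,j}) = v_{i,j}$ by \eqref{f:main}. Every index $j-i+2k$ occurring here is a valid bottom-row index: from $|j| \le n-i-2$ one gets $|j-i+2k| \le |j|+i \le n-2$, with the correct parity, so the right-hand side makes sense. The index $j-i$ is hit only by the term $k=0$, with coefficient $1$, and the index $j+i$ only by $k=i$, with coefficient $(-1)^i$; every other index appearing lies strictly between $j-i$ and $j+i$. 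Hence, in any coefficient ring, $v_{i,j}$ has a non-zero coefficient on $e_{j-i}$ and on $e_{j+i}$, so its support (the set of $l$ with non-zero coefficient on $e_l$) has minimum $j-i$ and maximum $j+i$.

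By Plamenevskaya's theorem the classes $e_l = c(\eta^n_{0,l})$ are linearly independent in the relevant coefficient module. Since $v_{i,j}$ has a non-zero coefficient on $e_{j-i}$, it is itself non-zero, proving $c(\eta^n_{i,j}) \ne 0$. If $c(\eta^n_{i,j}) = c(\eta^n_{i',j'})$ then $v_{i,j} = v_{i',j'}$, so their supports coincide; comparing minima gives $j-i = j'-i'$ and comparing maxima gives $j+i = j'+i'$, whence $(i,j) = (i',j')$. Thus $(i,j) \mapsto c(\eta^n_{i,j})$ is injective, and the corollary follows. I do not anticipate a genuine obstacle; the one point requiring care is purely bookkeeping, namely verifying that the representatives of the $c(\eta^n_{0,l})$ appearing in \eqref{f:main} are exactly those for which Plamenevskaya establishes linear independence (i.e.\ in the appropriate twisted coefficient setting), so that the two inputs may legitimately be combined.
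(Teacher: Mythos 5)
Your proposal is correct and follows essentially the same route as the paper: feed the formula of Theorem~\ref{main} into Plamenevskaya's linear independence of the bottom-row invariants to get non-vanishing and pairwise distinctness, then combine with the upper bound of $\frac{n(n-1)}{2}$ from the convex-surface-theory argument. The only difference is that you spell out the distinctness step (comparing the extreme indices $j-i$ and $j+i$ of the supports of the coefficient vectors), which the paper leaves implicit.
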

The same classification result could probably be derived also from Wu's work 
on Legendrian surgeries \cite{wu:surgery} and from
the computation of the contact invariants with twisted coefficients of 
contact manifolds with positive Giroux's torsion in \cite{ghiggini-honda:1}. 
However it is not clear how to obtain a complete description of the contact
invariants as in Theorem \ref{main} from that approach.

\subsection*{Acknowledgement}
This work was started when the authors met at the 2008
France-Canada meeting; we therefore  thank the Canadian Mathematical
 Society, the Soci\'et\'e Math\'ematique de France and CIRGET for their hospitality.
 We also thank Ko Honda for suggesting the problem to the first author and helping him 
to work out the upper bound in 2001, and Thomas Mark for useful explanations about
Heegaard Floer homology with twisted coefficients. We finally thank the anonymous referees 
for helping us improve the exposition. The
 first author acknowledges partial support from the ANR project `Floer
 Power.'  
\section{Contact structures on $- \Sigma(2,3,6n-1)$}
\subsection{Construction of the tight contact structures} 
\label{sec:construction}
We introduce the notation 
$$Y_n= - \Sigma (2,3,6n-1)$$ 
and, coherently with the
standard surgery convention, we define $Y_{\infty}$ to be the $3$-manifold obtained 
by $0$-surgery on the right-handed trefoil knot. We describe $Y_{\infty}$ as a quotient of 
$T^2 \times \R$ (with coordinates $(x,y)$ on $T^2$ and $t$ on $\R$):
\[ Y_{\infty} = T^2 \times \R / (\mathbf{v}, t) = (A \mathbf{v}, t-1) \]
where $A \colon T^2 \to T^2$ is induced by the matrix
$\left ( \begin{matrix} 1 & 1 \\
                       -1 & 0
\end{matrix} \right )$.  
In \cite{giroux:3} Giroux constructed a family of weakly symplectically
fillable contact structures $\xi_i$ on $Y_{\infty}$ for $i \geq 0$ as follows.
For any $i \geq 0$, fix a function 
$\varphi_i \colon \R \to \R$ such that:
\begin{itemize}
\item[(1)]  $\varphi_i'(t)>0$ for any $t \in \R$, and
\item[(2)] $(2i+1) \pi \leq \sup \limits_{t \in \R} (\varphi_i(t+1)- \varphi_i(t))< 2(i+1) \pi$.
\end{itemize}
By condition $(1)$ the $1$-form 
\[ \alpha_i = \sin(\varphi_i(t))dx+ \cos(\varphi_i(t))dy \]
 defines a contact structure $\tilde{\xi}_i = \ker \alpha_i$ on $T^2 \times \R$. Moreover 
it is possible to choose $\varphi_i$ such that the contact structure $\tilde{\xi}_i$
(but not the $1$-form $\alpha_i$ in general) is invariant under the action 
$({\mathbf v},t) \mapsto (A {\mathbf v}, t-1)$ and therefore defines a contact 
structure $\xi_i$ on $Y_{\infty}$.

\begin{prop}[{\cite[Proposition 2]{giroux:3}}] \label{prop:contact forms}
For any fixed integer $i \ge 0$ the contact structure $\xi_i$ is tight, and its isotopy 
class does not depend on the chosen function $\varphi_i$.
\end{prop}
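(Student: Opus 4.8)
The plan is to establish tightness by exhibiting each $\xi_i$ as the restriction of a weakly (in fact, as Giroux observes, weakly) symplectically fillable contact structure, and to establish uniqueness of the isotopy class by a convexity/normalization argument on the slopes of the characteristic foliation on the tori $T^2 \times \{t\}$. For tightness, the key observation is that the contact forms $\alpha_i = \sin(\varphi_i(t))\,dx + \cos(\varphi_i(t))\,dy$ are exactly of the type produced by a symplectic form on $T^2 \times \R \times [0,1]$ (or equivalently one recognizes $\tilde\xi_i$ as the standard contact structure on $T^2\times\R$ with linearly— up to reparametrization— rotating planes). Since the rotation is monotone by condition $(1)$, the contact structure is universally tight on the cover $T^2\times\R$; passing to the quotient $Y_\infty$ one must check that no overtwisted disk is created, which follows because the quotient inherits a weak symplectic filling (this is precisely the content of Giroux's construction that we are allowed to cite). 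Alternatively, tightness of $\xi_i$ on $Y_\infty$, and hence on the $-\Sigma(2,3,6n-1)$ obtained by Legendrian surgery, follows from the fact that $Y_\infty$ fibers and $\tilde\xi_i$ is an $\R$-invariant perturbation of the fibration; I would invoke the fillability statement as the cleanest route.

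For the independence of the isotopy class from the choice of $\varphi_i$, the main step is to show that any two admissible functions $\varphi_i, \varphi_i'$ satisfying $(1)$ and $(2)$ can be connected through a path of admissible functions, and that along such a path the contact structures $\xi_i^{(s)}$ vary by an isotopy. The path can be taken to be the straight-line homotopy $\varphi_i^{(s)} = (1-s)\varphi_i + s\varphi_i'$: condition $(1)$ (positivity of the derivative) is preserved under convex combinations, and condition $(2)$ — that $\sup_t(\varphi(t+1)-\varphi(t))$ lies in the half-open interval $[2(n-1)\pi, (2n-1)\pi)$ — is a convex condition on the function $t\mapsto \varphi(t+1)-\varphi(t)$, hence also preserved. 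One must also arrange that each $\varphi_i^{(s)}$ can be chosen so that $\tilde\xi_i^{(s)}$ descends to $Y_\infty$; here one uses that the $A$-invariance condition is about the behavior of the plane field, not the $1$-form, and is again an open/convex-type condition compatible with the homotopy (if necessary one perturbs within the isotopy class). Then Gray stability applied to the compact quotient $Y_\infty$ converts the homotopy of contact structures into an ambient isotopy, giving $\xi_i \cong \xi_i'$.

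The hard part will be the descent issue: making precise that one can choose the homotopy $\varphi_i^{(s)}$ so that at every stage the contact structure $\tilde\xi_i^{(s)}$ on $T^2\times\R$ is genuinely $A$-invariant (or at least isotopic through $A$-invariant ones), since the naive convex combination of functions need not interact well with the twisting introduced by the monodromy matrix $A$. I expect this to require either an explicit normal form for $A$-invariant rotating-plane structures — reducing an arbitrary admissible $\varphi_i$ to a preferred model (e.g. one that is affine near the ends, or genuinely periodic-plus-linear) by an $A$-equivariant isotopy — or a direct classification argument on the quotient along the lines of Giroux's and Honda's convex surface theory, where one cuts $Y_\infty$ along an incompressible torus and checks that the resulting tight contact structure on $T^2\times[0,1]$ with prescribed boundary slopes is unique. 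Either way, once the normalization is in place, Gray stability finishes the proof; the bookkeeping with the interval $[2(n-1)\pi,(2n-1)\pi)$ in condition $(2)$ is exactly what guarantees that all admissible $\varphi_i$ lie in a single normalization class and hence give isotopic $\xi_i$.
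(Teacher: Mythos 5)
Your proposal does not close either half of the statement, and note that the paper itself offers no argument to compare against in detail: Proposition \ref{prop:contact forms} is quoted directly from Giroux \cite{giroux:3}. For tightness your argument is circular: you reduce it to the assertion that the quotient ``inherits a weak symplectic filling, which is precisely the content of Giroux's construction that we are allowed to cite'' --- but that fillability statement \emph{is} the cited result. A self-contained argument would instead pull $\xi_i$ back to the universal cover $\R^3$, where it becomes $\ker\bigl(\sin(\varphi_i(t))\,dx+\cos(\varphi_i(t))\,dy\bigr)$, and invoke the (nontrivial) theorem that such monotonically rotating plane fields are tight, or else exhibit an explicit weak filling; you do neither.

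For the independence of the isotopy class there are two genuine gaps. First, the convexity claim for condition $(2)$ fails in the direction that matters: writing $\psi(t)=\varphi(t+1)-\varphi(t)$, the upper bound survives convex combination, but the lower bound need not, since $\sup_t\bigl((1-s)\psi_0+s\psi_1\bigr)$ can drop below $\min(\sup\psi_0,\sup\psi_1)$ when the two suprema are attained at different values of $t$. Second, and decisively, the straight-line homotopy $\varphi^{(s)}$ produces plane fields on $T^2\times\R$ that are in general not invariant under $(\mathbf v,t)\mapsto(A\mathbf v,t-1)$, hence do not define contact structures on $Y_\infty$ at all, so Gray stability has nothing to act on; your parenthetical ``if necessary one perturbs within the isotopy class'' assumes exactly what is to be proved. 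This descent issue cannot be treated as bookkeeping: condition $(1)$ alone \emph{is} convex, so if invariance could be ignored or cheaply restored, the identical argument would connect $\varphi_i$ to $\varphi_{i'}$ for $i\neq i'$ and ``prove'' that all the $\xi_i$ are isotopic, which is false --- they have different Giroux torsion, and the rest of the paper depends on the contact structures built from them being distinct. You do flag the invariance problem as ``the hard part'' and gesture at two plausible strategies (an $A$-equivariant normal form, or cutting along a fiber torus and classifying tight structures on $T^2\times[0,1]$ with prescribed boundary data), but that step is precisely the content of Giroux's theorem and is not supplied; as it stands the proposal is a plan rather than a proof.
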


The knot 
\[ F = \{ \mathbf{0} \} \times \R / (\mathbf{0}, t)=(\mathbf{0}, t-1) \subset Y_{\infty}\]
is Legendrian with respect to $\xi_i$ for any $i$. 
Given a framing on $F$, we define the {\em twisting number} of $F$ with respect to $\xi_i$,
denoted by $tn(F, \xi_i)$, as the number of times $\xi_i|_F$ rotates with respect to the 
framing on $F$. The twisting number depends on the framing and is a generalization of the
Thurston-Bennequin number to knots which are not necessarily null-homologous.

In \cite{ghiggini:3} the first 
author proved the following properties of $F$:
\begin{prop}[{\cite[Lemma 3.5]{ghiggini:3}}]\label{framing on F}
There exists a framing on $F$ such that:
\begin{enumerate} 
\item $tn(F, \xi_i)= -i-1$ 
\item performing surgery on $Y_{\infty}$ along $F$ with surgery coefficient $-n$ 
yields $Y_n$.
\end{enumerate}
\end{prop}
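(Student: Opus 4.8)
The plan is to establish the two assertions separately, starting with the topology.

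\emph{Topological identification and the framing.} Since the right-handed trefoil $K$ is a fibered knot of genus one, the $0$-surgery $Y_\infty$ is the surface bundle over $S^1$ whose fiber is the closed torus obtained by capping off the fiber surface and whose monodromy is the order-six extension of the trefoil monodromy; this is exactly the description $Y_\infty=T^2\times\R/(\mathbf v,t)\sim(A\mathbf v,t-1)$ recalled above, and under it the surgery dual of $K$ becomes a section of $Y_\infty\to S^1$. Because $A$ has no eigenvalue equal to $1$, any two sections are homotopic and hence isotopic, so the surgery dual of $K$ is isotopic to $F=\{\mathbf 0\}\times\R/\sim$; consequently $Y_\infty\setminus\nu(F)$ is diffeomorphic to the trefoil exterior $S^3\setminus\nu(K)$, the meridian $\mu_F$ going to the Seifert longitude $\lambda_K$. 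I would take as the framing of $F$ the curve corresponding to a meridian $\mu_K$ of $K$. Then $(-n)$-surgery on $F$ fills the trefoil exterior along $-n\mu_F+\lambda_F$, which equals $\mu_K-n\lambda_K$ under this identification, i.e.\ it performs $(-1/n)$-surgery on the right-handed trefoil; by the classical identification of Brieskorn spheres with surgeries on the trefoil this is $-\Sigma(2,3,6n-1)=Y_n$ (the case $n=1$ being $(-1)$-surgery on $K$, the reversed Poincar\'e sphere). This establishes the topological claim (2), up to the bookkeeping of orientation conventions.

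\emph{The Thurston--Bennequin computation.} With the framing of $F$ fixed as above, I would compute $tn(F,\xi_i)$ directly from the contact form. Along $F$ the plane field $\tilde\xi_i$ is spanned by $\partial_t$ and by the line in the $(x,y)$-plane of direction $-\varphi_i(t)$, so a nowhere-zero section of $\xi_i|_F$ is obtained from the vector $\cos(\varphi_i(t))\partial_x-\sin(\varphi_i(t))\partial_y$, and the contact framing of $F$ is read off by transporting this section over a fundamental interval $t\in[0,1]$ and then closing up with the gluing map $A$. Relative to the constant $(x,y)$-frame the section turns by $-(\varphi_i(1)-\varphi_i(0))$, a prescribed quantity governed by condition (2) on $\varphi_i$, while closing up by $(\mathbf v,1)\sim(A\mathbf v,0)$ contributes the rotation induced by $A$ on the projectivized fiber at the fixed point, which has order three. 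Combining the two contributions and re-expressing the result with respect to $\lambda_F=\mu_K$ — using the explicit correspondence between the $(x,y)$-directions and the classes $\mu_K,\lambda_K$ on $\partial\nu(F)$ coming from the argument for the topological statement — gives the twisting of $\xi_i$ along $F$; the arithmetic yields $-i-1$. Equivalently, one can realize $F$ on a suitable convex surface and read off $tn$ from its dividing set.

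The step I expect to be the real obstacle is the last one: matching the two normal framings of $F$. The rotation of the contact planes is naturally recorded in the torus coordinates $(x,y)$, whereas $\lambda_F$ is defined through the diffeomorphism onto the trefoil exterior, so one must make that diffeomorphism explicit on $\partial\nu(F)$ — deciding which primitive class on the boundary torus is $\mu_K$ and which is $\lambda_K$ — in order to translate one count into the other. It is precisely this dictionary that fixes the signs and the additive constant, and hence pins down the value $-i-1$ rather than $-i$ or $-i-2$; once it is in hand, both halves of the proposition reduce to short computations.
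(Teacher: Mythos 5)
The paper itself gives no proof of this proposition (it is quoted from \cite[Lemma 3.5]{ghiggini:3}), so your attempt can only be judged on its own terms. Your overall plan is the natural one and surely close in spirit to the cited lemma: identify $Y_\infty\setminus\nu(F)$ with the trefoil exterior with $\mu_F$ corresponding to $\lambda_K$, take the framing of $F$ given by $\mu_K$, and read the twisting off the explicit linear model together with the holonomy of $A$. However, part (2) as written rests on a false identification: $(-1/n)$-surgery on the right-handed trefoil is $\Sigma(2,3,6n+1)$, not $-\Sigma(2,3,6n-1)$, and $(-1)$-surgery on it is $\Sigma(2,3,7)$, not the reversed Poincar\'e sphere. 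What you actually need is the filling slope $\mu_K+n\lambda_K$, i.e. $(+1/n)$-surgery on the right-handed trefoil, which is exactly what Figure \ref{topsurg.fig} gives after a slam dunk ($0-\tfrac{1}{-n}=\tfrac1n$). Your chosen framing does produce this once orientations are tracked consistently (if $\mu_F=\lambda_K$ and the intersection pairing on the boundary torus is respected, then $\lambda_F=-\mu_K$, so the $-n$ filling slope is $-n\lambda_K-\mu_K\sim\mu_K+n\lambda_K$), but the sign bookkeeping you wave at is precisely where your argument went wrong, and the ``classical identification'' you invoke to conclude is incorrect as stated. A smaller point: ``homotopic hence isotopic'' for sections is not automatic; it is cleaner to note that since the trefoil is fibered of genus one, $Y_\infty\setminus\nu(F)$ is the once-punctured torus bundle with monodromy $A$, hence is the trefoil exterior, with no isotopy of sections needed.

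For part (1) there is a genuine gap rather than a slip: you never carry out the framing comparison, and you say so yourself. The value $-i-1$ comes exactly from the step you defer --- expressing the class $\mu_K$ on $\partial\nu(F)$ in the $(x,y,t)$-coordinates of the model, then combining the rotation $\varphi_i(t+1)-\varphi_i(t)$ (which must be pinned down by the $i$-dependent bound in the construction of $\xi_i$) with the order-three holonomy of $A$ on the projectivized normal fiber, and converting that count into twisting relative to $\mu_K$ rather than relative to the non-invariant constant frame. As written, ``the arithmetic yields $-i-1$'' is an assertion, not a computation, and since that dictionary is also what would have caught the sign error in (2), the proposal does not yet establish either half of the proposition.
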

If $Y_\infty$ is identified with the $0$-surgery on the right-handed trefoil knot, then $F$ 
corresponds to a meridian, i.e. it is the knot labeled by ``$-n$'' in Figure \ref{topsurg.fig}.
The framing on $F$ from Proposition \ref{framing on F} then corresponds to the Seifert 
framing of the meridian in the surgery diagram shown in Figure \ref{topsurg.fig}.

Moreover, even though $F$ is nontrivial in homology, we can define a rotation number 
${\tt rot}(\leg, \xi_i)$ for an oriented Legendrian knot $\leg \subset (Y_{\infty}, \xi_i)$ 
smoothly isotopic to $F$: we set ${\tt rot}(F, \xi_i)=0$ for all $i$ and define 
${\tt rot}(\leg, \xi_i) = {\tt rot}(\leg \cup \overline{F}, \xi_i)$, where $\overline{F}$ 
denotes $F$ with the opposite orientation. We do not need to reference a Seifert 
surface for $\leg \cup \overline{F}$ because $c_1(\xi_i)=0$.
We are finally in position to give a precise definition of the contact
structures $\eta_{i,j}^n$ and, at the same time, to explain the topological meaning 
of the indices $i$ and $j$.
\begin{dfn}
For any $(i,j) \in {\mathcal P}_n$ the contact manifold $(Y_n, \eta_{i,j}^n)$ is obtained
 by Legendrian surgery on $(Y_{\infty}, \xi_i)$
along a Legendrian knot $F_{i,j}$ which is obtained by applying $n-i-2$ 
stabilizations to $F$, choosing their signs so that ${\tt rot}(F_{i,j}, \xi_i)=j$.
\end{dfn}

In order to complete the classification of tight contact structures on $Y_n$ we
need two steps: 
\begin{enumerate}
\item prove that there are at most $\frac{n(n-1)}{2}$ distinct tight contact 
structures on $Y_n$ up to isotopy, and
\item prove that the contact structures $\eta_{i,j}^n$ are all pairwise nonisotopic.
\end{enumerate}
The first step is a folklore result which follows from the arguments of 
\cite{ghiggini-schonenberger}, but nevertheless we are going to sketch its proof in the 
next subsection. The second step is a corollary of Theorem \ref{main}, 
which will be proved in the last section. 

\subsection{Upper bound}
The upper bound on the number of tight contact structures 
on $Y_n$ can be easily obtained following the strategy in 
\cite{ghiggini-schonenberger}, where the tight 
contact structures on $- \Sigma(2,3,11)$ have been classified. In fact, the manifold
denoted by $Y_n$ in this paper corresponds to the manifold denoted by 
$M(- \frac 12, \frac 13, \frac{n}{6n-1})$ in \cite{ghiggini-schonenberger}.
We recall the conventions of that paper.

\begin{figure}\centering
\psfrag{e0}{\small $0$}
\psfrag{r1}{\small $2$}
\psfrag{r2}{\small $-3$}
\psfrag{r3}{\small $- \frac{6n-1}{n}$}
\includegraphics[width=5cm]{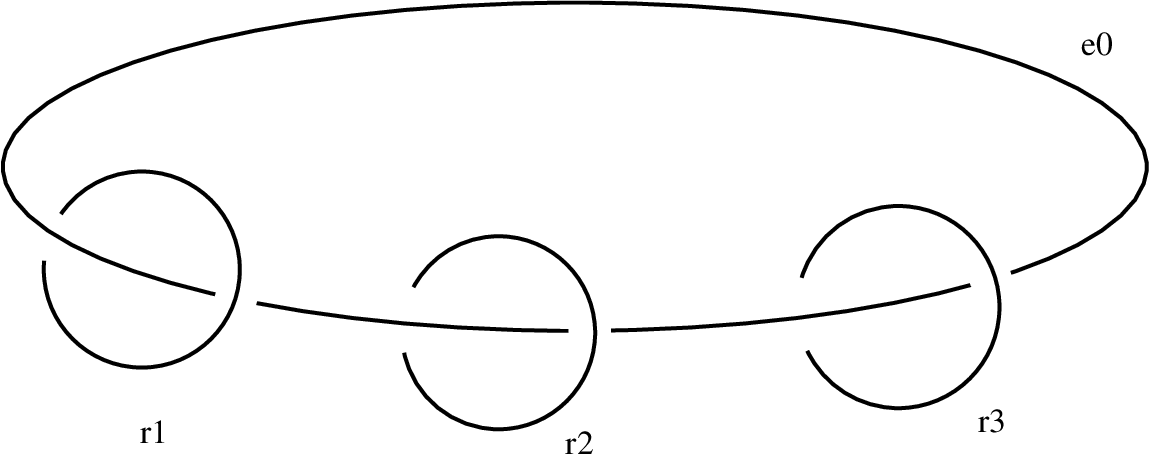}
\caption{Surgery diagram for $- \Sigma(2,3,6n-1)$}
\label{topsurg2.fig}
\end{figure}
The manifold $Y_n$ can be described also by the surgery diagram shown in Figure 
\ref{topsurg2.fig}.
See \cite[Figure 17]{ghiggini-schonenberger} for a sequence of Kirby move
from the diagram in Figure \ref{topsurg2.fig} to the diagram in Figure
\ref{topsurg.fig}.

The surgery diagram \ref{topsurg2.fig} describes a splitting of $Y_n$ into four
pieces:
\[ Y_n = (\Sigma \times S^1) \cup V_1 \cup V_2 \cup V_3 \]
where $\Sigma$ is a three-punctured sphere, i.e. a pair of pants, and $V_1$, $V_2$, 
and $V_3$ are solid tori. We orient the boundary of $\Sigma \times S^1$ by the ``inward 
normal vector first'' convention (i.e. we give it the opposite of the usual boundary 
orientation), and identify each component $\partial(S^1 \times \Sigma)_i$ of 
$\partial(S^1 \times \Sigma)$ with
$\R^2/\Z^2$ by setting $\binom{0}{1}$ as the direction of the $S^1$-fibers and 
$\binom{1}{0}$ as the direction of $\partial(\{pt\}\times \Sigma)$. 

We also fix 
identifications of $\partial V_i$ with $\R^2/\Z^2$ by setting $\binom{1}{0}$ as the 
direction of the meridian. Then we obtain the manifold $Y_n$
by attaching the solid tori $V_i$ to $S^1\times \Sigma$, where the attaching maps
$A_i \colon  \partial V_i \to \partial(S^1\times\Sigma)_i$ are given by
\[ A_1= \left( \begin{matrix} 2 & -1 \\  1 &  0  \end{matrix} \right),\qquad
A_2= \left( \begin{matrix} 3 &  1 \\ -1 &  0  \end{matrix} \right),\qquad
A_3= \left( \begin{matrix}6n-1 &  6 \\ -n & -1  \end{matrix} \right). \]
This construction induces a Seifert fibration on $Y_n$ where the curves
$S^1 \times \{ pt \}$ are regular fibers, and the cores of the solid tori $V_i$ are
the singular fibers. The regular fibers have a natural framing coming from 
the Seifert fibration, and the singular fibers have a framing coming from
the chosen identification of $\partial V_i$ with $\R^2/ \Z^2$. 
These framings can be extended in a unique way to all curves which are 
isotopic to fibers because   the manifolds $Y_n$ are integer homology spheres.
Therefore, for a contact structure $\xi$ on $Y_n$, we can speak about 
the twisting number $tn(L, \xi)$ of a Legendrian curve $L$ which is isotopic 
to a fiber of the Seifert fibration.


 \begin{dfn}
 For any contact structure $\xi$ on $Y_n$, we define 
 the {\em maximal twisting number} of $\xi$ as
 \[
 t(\xi)= \max_{L \in  {\mathfrak L}} \min \{ tn(L, \xi), 0 \}
 \]
where ${\mathfrak L}$ is the set of all Legendrian curves in $Y_n$ which are
smoothly isotopic to a regular fiber.
 \end{dfn}
The maximal twisting number is clearly an isotopy invariant of the contact 
structure $\xi$.

\begin{prop}
Let $\xi$ be a tight contact structure on $Y_n$. Then $t(\xi)<0$.
\end{prop}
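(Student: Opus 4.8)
The plan is to argue by contradiction, following the convex surface machinery used for $- \Sigma(2,3,11)$ in \cite{ghiggini-schonenberger}. Suppose $t(\xi) \geq 0$. Then some Legendrian curve $L \in {\mathfrak L}$ has $tn(L,\xi) \geq 0$, and after stabilising $L$ the required number of times we may assume $tn(L,\xi) = 0$. It therefore suffices to show that $Y_n$ carries no tight contact structure admitting a Legendrian regular fibre with vanishing twisting number; this is a statement about the Seifert fibration that is proved by convex surface theory on its pieces.

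First I would replace $L$ by a fibred tubular neighbourhood $N(L)$ (a union of regular fibres) with convex boundary; since $tn(L,\xi)=0$, its dividing set $\Gamma_{\partial N(L)}$ consists of two curves parallel to the fibre. The complement $M = Y_n \setminus N(L)$ is Seifert fibred over the disc with the three exceptional fibres of $Y_n$, so $M = (\Sigma \times S^1) \cup V_1 \cup V_2 \cup V_3$ where $\Sigma$ is a four-punctured sphere and $\partial N(L)$ is one component of $\partial(\Sigma \times S^1)$. Using Giroux flexibility, the Legendrian realisation principle along the vertical annuli joining $\partial N(L)$ to the tori $\partial V_i$, and the classification of tight contact structures on $T^2 \times I$, I would arrange all of $\partial(\Sigma \times S^1)$ to be convex with dividing curves parallel to the fibre; if a thickening of $N(L)$ obstructs this step, the obstruction produces a convex torus of some other slope which is analysed the same way. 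With vertical boundary, the restriction of $\xi$ to $\Sigma \times S^1$ lies in Honda's finite list of tight contact structures on a surface times $S^1$, and its restriction to each $V_i$ lies in the finite list of tight contact structures on a solid torus with the given convex boundary slope; hence $\xi$ is one of finitely many candidates, each assembled from a dividing multicurve on the four-punctured sphere together with a choice of tight structure on each $V_i$.

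The last step is to rule out every candidate. The clean way is to find a convex surface violating Giroux's tightness criterion: a horizontal three-punctured sphere $S$ (a section of the fibration over $S^2$ with the three exceptional points removed), isotoped to be convex with Legendrian boundary on $\partial V_1 \cup \partial V_2 \cup \partial V_3$, or alternatively a vertical annulus between two of the exceptional fibres. The geometric intersection numbers of $\partial S$ with the dividing sets $\Gamma_{\partial V_i}$ are forced by the attaching matrices $A_1,A_2,A_3$ and the boundary slopes fixed in the previous step, and for the Seifert data of $- \Sigma(2,3,6n-1)$ one computes that $\Gamma_S$ must contain a boundary-parallel arc, which yields an overtwisted disc. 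In other words, $t(\xi)=0$ would force $\xi$ to be isotopic to a contact structure transverse to the fibres, which cannot exist for this orientation of the Brieskorn sphere.

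I expect the middle step to be the main obstacle: tracking the dividing slopes through the gluing matrices $A_i$, bounding the possible Giroux torsion in the $\Sigma \times S^1$ piece so that the candidate list is genuinely finite, and checking that the case analysis in the last step is exhaustive. Since all of this parallels \cite{ghiggini-schonenberger} with only notational changes, I would present it as a sketch and refer the reader there for the details.
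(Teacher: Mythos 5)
Your overall strategy is the same as the paper's: the paper disposes of this proposition in one line by invoking the argument of \cite[Theorem 4.14]{ghiggini-schonenberger}, and your sketch is a reconstruction of that convex-surface-theory argument (vertical Legendrian of twisting zero, vertical annuli and the Imbalance Principle to make all the tori $\partial(Y_n\setminus V_i)$ convex with fibre-parallel dividing curves, then a horizontal punctured-sphere section). Up to that point your outline is the standard one and is fine as a sketch.

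The gap is in your endgame. A boundary-parallel dividing arc on the convex horizontal surface $S$ does \emph{not} yield an overtwisted disc; it yields a bypass for the adjacent torus $\partial(Y_n\setminus V_i)$. Attaching that bypass only changes the boundary slope, i.e.\ it thickens the corresponding solid torus $V_i$, and the contradiction is reached only after iterating this thickening and tracking the slopes through the attaching matrices $A_i$ until one violates either the classification of tight solid tori or the assumed maximality of the twisting (this slope bookkeeping, via \cite[Proposition 4.16]{honda:1} and the Twisting Number Lemma, is exactly the content of the cited argument, and is also how the two subsequent propositions in this section run). Likewise, the complementary case in which $\Gamma_S$ has no boundary-parallel arcs is not excluded simply by saying that $-\Sigma(2,3,6n-1)$ admits no contact structure transverse to the fibres: there is no off-the-shelf theorem here that zero maximal twisting forces the structure to be isotopic to a transverse one, so that exclusion must be done by hand, by an Eisenbud--Hirsch--Neumann-type count of the possible arc configurations against the Seifert invariants (equivalently, the sign of the Euler number). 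So the skeleton is right, but the two assertions that carry the actual contradiction are respectively incorrect as stated and unsupported; as written, the ``clean way'' you propose would not close the proof without importing precisely the slope analysis from \cite{ghiggini-schonenberger} that you defer.
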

\begin{proof}
The proof is the same as in \cite[Theorem 4.14]{ghiggini-schonenberger}.
\end{proof}

\begin{lemma}
If $\xi$ can be isotoped so that the singular fiber $F_2$ is a Legendrian curve 
with twisting number $tb(F_2, \xi)=-1$, then there is a Legendrian regular fiber
with twisting number zero. In particular $\xi$ is overtwisted.
\end{lemma}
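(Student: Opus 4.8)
The plan is to construct the required Legendrian regular fiber by a convex surface argument inside the Seifert decomposition of $Y_n$; once such a fiber is in hand, overtwistedness is immediate. Indeed, by the preceding Proposition every tight contact structure on $Y_n$ has $t(\xi)<0$, whereas a Legendrian regular fiber of twisting number $0$ forces $t(\xi)\ge 0$, so $\xi$ cannot be tight.

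First I would take a standard convex neighborhood $N(F_2)$ of the Legendrian singular fiber $F_2$, placed inside the solid torus $V_2$, so that $\partial N(F_2)$ is a convex torus with two dividing curves. Since $tb(F_2,\xi)=-1$ with respect to the framing coming from the identification $\partial V_2\cong\R^2/\Z^2$, the dividing set of $\partial N(F_2)$ is parallel to the contact longitude $\binom{0}{1}+(-1)\binom{1}{0}=\binom{-1}{1}$. Transporting this class through the attaching matrix $A_2$ and recording it in the fiber/section coordinates on $\partial(S^1\times\Sigma)_2$, one finds that $M:=Y_n\setminus\mathrm{int}\,N(F_2)$ --- the Seifert piece $(\Sigma\times S^1)\cup V_1\cup V_3$ fibered over a disk with the two remaining singular fibers $F_1$ and $F_3$ --- has convex boundary whose dividing set has slope $-\tfrac12$, while the regular fiber has slope $\infty$.

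Next I would realize $F_1$ and $F_3$ as Legendrian curves in efficient position, make the three boundary tori of $\Sigma\times S^1$ convex, and study a convex vertical annulus in $\Sigma\times S^1$ joining $\partial(S^1\times\Sigma)_2$ to the boundary of a neighborhood of the multiplicity-$2$ fiber $F_1$. After putting this annulus in standard form, its dividing set acquires a boundary-parallel arc, i.e.\ a bypass, and attaching that bypass to a parallel copy of $\partial M$ moves its dividing slope one step along the Farey graph. Iterating --- using the comparatively rigid classification of tight contact structures on the solid torus $V_1$, together with the relative half-Euler class of $\xi|_{\Sigma\times S^1}$, to control the intermediate slope on $\partial V_1$ and to supply the next bypass --- I expect to reach a convex vertical torus with dividing slope exactly the fiber slope $\infty$. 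Its Legendrian divides are then regular fibers, and, the torus being a union of fibers, they carry the Seifert framing and hence have twisting number $0$. This is essentially a transcription of the analogous steps of \cite{ghiggini-schonenberger}.

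The main obstacle is the slope bookkeeping in the last paragraph: one must check that the bypasses furnished by the vertical annuli genuinely exist and that they rotate the dividing slope toward $\infty$ rather than away from it, and that no tight ``stopping'' configuration can occur at an intermediate slope, so that the process is forced to terminate at the fiber slope. This is precisely where the hypothesis $tb(F_2,\xi)=-1$ enters: for $tb(F_2,\xi)\le-2$ the same manipulation would rotate the slope the wrong way.
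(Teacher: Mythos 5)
Your setup is the same as the paper's (standard neighbourhoods of $F_1$ and $F_2$, the slope $-\tfrac12$ on $\partial(Y_n\setminus V_2)$ coming from $tb(F_2,\xi)=-1$, vertical convex annuli, bypasses via the Imbalance Principle), but the argument has a genuine gap exactly at the point you yourself flag as ``the main obstacle'', and that obstacle cannot be overcome by working with $V_1$ and $V_2$ alone. The bypasses supplied by the annulus only persist while $tb(F_1)=k_1\le -1$: once $k_1=0$ the slope of $\partial(Y_n\setminus V_1)$ is $0$, the imbalance disappears, and nothing guarantees a further bypass --- the slopes $\tfrac{k_1}{2k_1-1}$ realized by neighbourhoods of $F_1$ never equal the fibre slope $\infty$, and neither the classification of tight contact structures on the solid torus $V_1$ nor the relative half-Euler class rules out the ``stopping'' configuration you mention. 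The paper's proof deals with precisely this case by bringing in the third singular fibre, which your proposal never uses: if the annulus $A$ carries no bypass after $k_1$ has been pushed to $0$, one cuts $Y_n\setminus(V_1\cup V_2)$ along $A$ and rounds the edges (\cite[Lemma 3.11]{honda:1}), obtaining a convex torus isotopic to $\partial(Y_n\setminus V_3)$ with slope $0$ in the outside coordinates, i.e.\ slope $-n$ measured in $\partial V_3$; then \cite[Proposition 4.16]{honda:1} produces a convex torus inside $V_3$ with slope $-n+\tfrac16$, which is exactly the vertical slope $\infty$ seen from $\partial(Y_n\setminus V_3)$, so its Legendrian divides are regular fibres with twisting number $0$. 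Without this step (or a substitute for it) your iteration is not forced to terminate at the fibre slope, so the lemma is not established.

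A smaller point: your explanation of where the hypothesis $tb(F_2,\xi)=-1$ enters is not quite the right mechanism. Its role is to fix the outside slope $-\tfrac12$ on $\partial(Y_n\setminus V_2)$, which both keeps the Imbalance Principle running until $k_1=0$ and makes the edge-rounded torus have the specific slope ($-n$ in $\partial V_3$) needed to find the intermediate vertical torus inside $V_3$; it is not that a smaller twisting number would ``rotate the slope the wrong way'', but that with $tb(F_2)\le -2$ the argument simply yields no vertical torus (as it must not, since tight structures exist there). The deduction of overtwistedness from the existence of a twisting-$0$ regular fibre via $t(\xi)<0$ is fine and is the same as in the paper.
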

\begin{proof}
We isotope $F_1$ so that it becomes a Legendrian curve with twisting number 
$tb(F_1)= k_1 \ll 0$. Let $V_1$ and $V_2$ be standard neighborhoods of $F_1$ and $F_2$
respectively. We assume that $\partial (Y_n \setminus V_1)$ and 
$\partial (Y_n \setminus V_2)$ have Legendrian rulings of infinite slope, and take a convex 
annulus $A$ with boundary on a Legendrian ruling curve of 
$\partial (Y_n \setminus V_1)$ and one of $\partial (Y_n \setminus V_2)$.

The slope of $\partial (Y_n \setminus V_1)$ is $\frac{k_1}{2k_1-1}$, and the slope of 
$\partial (Y_n \setminus V_2)$ is $- \frac 12$. As long as $k_1 \le -1$ the 
Imbalance Principle \cite[Proposition 3.17]{honda:1} provides a bypass along 
a Legendrian ruling curve of $\partial (Y_n \setminus V_1)$. Therefore we can 
apply the Twisting Number Lemma \cite[Lemma 4.4]{honda:1} to increase the 
twisting number $k_1$ of a singular fiber by one up to $k_1=0$, which corresponds
to slope $0$ on $\partial (Y_n \setminus V_1)$. At this point there are two possibilities for
the annulus $A$ between $\partial (Y_n \setminus V_1)$ and $\partial (Y_n \setminus V_2)$: 
either  $A$  carries a bypass for $\partial (Y_n \setminus V_1)$, or it does not. 
If such a bypass exists, then the slope of $\partial (Y_n \setminus V_1)$ can be made 
infinite, and we are done.  
If there is no such a bypass, cutting along $A$ and rounding the edges 
yields a torus with slope $0$ (see \cite[Lemma 3.11]{honda:1}), which is 
$-n$ when measured in $\partial V_3$.
In this case by \cite[Proposition 4.16]{honda:1} we find a convex torus  in $V_3$ with 
slope $- n+ \frac{1}{6}$ ,
which corresponds to infinite slope in $\partial (M \setminus V_3)$.  
\end{proof}

\begin{prop}
Let $\xi$ be a tight contact structure on $Y_n$. Then $t(\xi) = -6k+1$ for some $k$
with $0<k<n-1$.
\end{prop}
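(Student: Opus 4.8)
The plan is to run the standard convex-surface-theory analysis of tight contact structures on a small Seifert fibered space, in the spirit of Etnyre--Honda and of \cite{ghiggini-schonenberger}, taking as input the previous proposition (which gives $t(\xi)<0$) and the preceding lemma (which forbids a Legendrian realization of $F_2$ with twisting $-1$).

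First I would realize a Legendrian regular fibre $L$ with $tn(L,\xi)=t:=t(\xi)<0$ and, simultaneously, realize the three singular fibres $F_1,F_2,F_3$ as Legendrian curves with maximal twisting number; after a $C^{0}$-small perturbation they admit disjoint standard tubular neighbourhoods $N(L),V_1,V_2,V_3$ with convex boundary in standard form. Then $Y_n=P\cup V_1\cup V_2\cup V_3\cup N(L)$, where $P\cong\Sigma'\times S^1$ with $\Sigma'$ a four-punctured sphere, so that $\partial P$ has the four components $\partial V_1,\partial V_2,\partial V_3,\partial N(L)$ (equivalently, one may view $N(L)$ as the neighbourhood of a multiplicity-one exceptional fibre and $Y_n$ as a Seifert piece over a disc with four exceptional fibres, the extra one encoding the framing datum $t$). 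In coordinates adapted to the Seifert structure, with $\binom{0}{1}$ the fibre direction, the dividing set of $\partial N(L)$ has slope $\tfrac{1}{t}$, while the dividing set of each $\partial V_i$ has a slope determined by $tn(F_i,\xi)$ and by the gluing matrix $A_i$.

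The core of the argument is to pin down all of these slopes. Using vertical convex annuli inside $P$ joining pairs of the four boundary tori, together with the Imbalance Principle and the Twisting Number Lemma of \cite{honda:1}, one shows that either some such annulus carries a bypass that raises the twisting of a fibre --- contradicting maximality, or, in the case of $F_2$, producing a Legendrian realization of $F_2$ with twisting $-1$ and hence, by the preceding lemma, an overtwisted disc --- or else the tight contact structure on $P\cong\Sigma'\times S^1$ carries no vertical Giroux torsion and attains its minimal twisting along the vertical annuli. For contact structures of the latter type on a planar-base Seifert piece, the classification (Honda; see also \cite{ghiggini-schonenberger,wu:surgery}) imposes an additivity relation among the four boundary slopes, generalizing the Farey-sum rule governing tight contact structures on $T^2\times I$. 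Combining this relation with Honda's classification of tight contact structures on the solid tori $V_1,V_2,V_3$ --- which restricts the admissible slopes of $\partial V_i$, and in particular forces $tn(F_2,\xi)\le -2$ by the preceding lemma --- yields $t\equiv 1\pmod 6$. The modulus $6$ is the product of the multiplicities of $F_1$ and $F_2$; it enters through the change of coordinates that records transporting a vertical convex torus across both $V_1$ and $V_2$, and the same $6$ is visible in the matrix $A_3$ and in the slope $-n+\tfrac16$ from the proof of the preceding lemma. Since $t<0$, this congruence gives $t=-6k+1$ with $k\ge 1$. For the remaining bound $k\le n-2$, i.e. $t\ge -6n+13$, I would argue that a more negative value of $t$ would push the boundary slope of $V_3$ past the meridian, into the range that Honda's classification excludes for a tight contact structure on a solid torus --- the threshold being dictated by the multiplicity $6n-1$ of $F_3$ --- so that $\xi$ would be overtwisted.

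The main obstacle is the bookkeeping in the third paragraph: fixing mutually compatible coordinates on the four boundary tori, transporting dividing slopes correctly through the gluing matrices $A_1,A_2,A_3$ and through the framing change recording $t$, and then invoking the classification on $\Sigma'\times S^1$ under exactly the right \emph{minimal vertical twisting} hypothesis --- which must itself be secured by careful bypass-chasing along the vertical annuli and by the preceding lemma. Extracting precisely the congruence $t\equiv 1\pmod 6$ together with the sharp interval $0<k<n-1$, rather than a weaker divisibility or a coarser bound on $|t(\xi)|$, is where the real work lies; ruling out the Giroux torsion needed for the upper bound on $|t(\xi)|$ in particular may require an argument sensitive to the arithmetic of $- \Sigma(2,3,6n-1)$.
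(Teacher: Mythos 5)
Your outline lives in the right framework (convex decomposition along a maximally twisting regular fibre, Imbalance Principle, Twisting Number Lemma, the preceding lemma excluding $tn(F_2)=-1$), but it defers exactly the two steps that constitute the proof, and the way you propose to fill them would not work as stated. The congruence is not obtained in the paper from any classification of tight contact structures on the four-punctured $\Sigma'\times S^1$, and there is no off-the-shelf ``additivity relation among the four boundary slopes'' to invoke; the known classifications of minimally twisting tight structures on pants$\,\times S^1$ give lists of dividing-set configurations, not a slope identity, and you yourself concede that extracting $t\equiv 1\pmod 6$ ``is where the real work lies.'' The paper's derivation is short and concrete: with $t(\xi)=-q$, a regular fibre $L$ of twisting $-q$, and the singular fibres made Legendrian with $k_i\ll 0$, one runs annuli from $L$ to vertical ruling curves of $\partial(Y_n\setminus V_1)$ and $\partial(Y_n\setminus V_2)$ (the boundary slopes there are $\tfrac{k_1}{2k_1-1}$ and $-\tfrac{k_2}{3k_2+1}$), and the Imbalance Principle plus the Twisting Number Lemma push $k_1,k_2$ up until either the degenerate values $k_1=0$, $k_2=-1$ occur (excluded by $t(\xi)<0$ and by the preceding lemma) or $2k_1-1=-q=3k_2+1$. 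These two equalities alone force $q$ odd and $q\equiv 2\pmod 3$, i.e.\ $q=6k-1$, which is the entire source of the modulus $6$. Your proposal never produces these equalities, so it does not prove the statement.

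The bound on $k$ is also obtained by a different mechanism from the one you suggest, and yours is not sound. In the paper one takes a vertical convex annulus $A$ between $\partial(Y_n\setminus V_1)$ and $\partial(Y_n\setminus V_2)$; maximality of $-q$ rules out boundary-parallel dividing arcs, and cutting along $A$ and rounding edges yields a torus isotopic to $\partial(Y_n\setminus V_3)$ of slope $-\tfrac{k}{6k+1}$, i.e.\ slope $-n+k$ on $\partial V_3$. If $k\ge n$, Honda's Proposition 4.16 produces a standard neighbourhood $V_3'$ of $F_3$ whose boundary has slope $-\tfrac16$ seen from $Y_n\setminus V_3'$, hence a vertical Legendrian curve of twisting $-6$, contradicting the maximality of $t(\xi)=-6k+1\le -6n+1$. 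Note the contradiction is with the maximal twisting number, not with tightness of $V_3$: tight contact structures on a solid torus exist for every boundary slope except the meridional one, so ``pushing the slope of $\partial V_3$ past the meridian'' excludes at most the single value $k=n$ and cannot give the required range of $k$. To make your proposal into a proof you would need to carry out the explicit slope bookkeeping above rather than appeal to a classification of the $\Sigma'\times S^1$ piece.
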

\begin{proof}
Let $t(\xi)=-q$.
We start by assuming that the contact structure has been isotoped so that 
there is a Legendrian regular fiber $L$ with twisting number $tb(L, \xi)=-q$, 
and the singular fibers $F_i$ are Legendrian curves with twisting numbers 
$k_i \ll 0$. We take $V_i$ to be a standard neighborhood of the singular fiber 
$F_i$ disjoint from $L$ for $i=1,2,3$. 

The slopes of $\partial V_i$ are $\frac{1}{k_i}$, while the slopes of 
$\partial (Y_n \setminus V_i)$ are $\frac{k_1}{2k_1-1}$, $- \frac{k_2}{3k_2+1}$, and 
$- \frac{nk_3+1}{(6n-1)k_3+6}$ respectively. We also assume that the Legendrian 
ruling on each $\partial (Y_n \setminus V_i)$ has infinite slope, and take 
convex annuli $A_i$ whose boundary consists of $L$ and of a 
Legendrian ruling curve of $\partial (Y_n \setminus V_i)$ for $i=1,2$. If $2k_1-1 < -q$ the 
Imbalance Principle \cite[Proposition 3.17]{honda:1} provides a bypass along 
a Legendrian ruling curve either in $\partial (Y_n \setminus V_1)$ or in 
$\partial (Y_n  \setminus V_2)$. Then we can 
apply the Twisting Number Lemma \cite[Lemma 4.4]{honda:1} to increase the 
twisting number $k_i$ of a singular fiber by one until either $2k_1-1 < -q$, or
$k_1=0$. Similarly we use the annulus $A_2$ to increase $k_2$ until
either $3k_2+1=-q$, or $k_2=-1$.

If $2k_1-1=3k_2+1=-q$ we can write $k_1= -3k +1$, $k_2 = -2k$, and $q=6k-1$ for some
$k > 0$. Take a convex annulus $A$ with Legendrian boundary 
consisting of a Legendrian ruling curve of $\partial (Y_n \setminus V_1)$ and of one of 
$\partial (Y_n  \setminus V_2)$. The dividing set of $A$ contains no boundary parallel arc; 
otherwise we could attach a bypass to either $\partial (Y_n \setminus V_1)$ or to 
$\partial (Y_n  \setminus V_2)$, and the vertical Legendrian ruling curves of the 
resulting torus would
contradict the maximality of $-q$. If we cut $Y_n \setminus (V_1 \cup V_2)$ along $A$ and 
round the edges, we obtain a torus with slope $- \frac{k}{6k+1}$ isotopic to 
$\partial (Y_n \setminus V_3)$. 
Its slope corresponds to $-n +k$ on $\partial V_3$. If $k \geq n$ we can 
find a standard neighborhood $V_3'$ of $F_3$ with infinite boundary slope by
\cite[Proposition 4.16]{honda:1}. This boundary slope becomes $- \frac{1}{6}$ 
if measured with respect to $\partial (Y_n \setminus V_3')$, contradicting $q > 6n-1$.
(Remember that we are assuming $n \geq 2$.)
\end{proof}

\begin{prop}
There are at most $\frac{n(n-1)}{2}$ isotopy classes of tight contact 
structures on $Y_n$.
\end{prop}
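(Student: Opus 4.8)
The plan is to prove the bound with convex surface theory, following the scheme that Ghiggini and Sch\"onenberger used for $-\Sigma(2,3,11)$. By the two preceding propositions the maximal twisting number of a tight contact structure $\xi$ on $Y_n$ is negative and of the form $t(\xi)=-6k+1$, hence takes only finitely many values; so it suffices to bound, for each such value, the number of isotopy classes of tight $\xi$ with that value of $t(\xi)$, and then add up the bounds.

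Fix such a value and write $q=6k-1$. The first step is to normalise $\xi$: isotope it so that a regular fibre $L$ is Legendrian with $tn(L,\xi)=-q$ and the singular fibres $F_1,F_2,F_3$ are Legendrian with very negative twisting numbers, and take disjoint standard convex neighbourhoods $V_i$ of the $F_i$, all disjoint from $L$. Rerunning the bypass arguments from the proof of the previous proposition --- the Imbalance Principle \cite{honda:1} on convex vertical annuli joining $L$ to $\partial(Y_n\setminus V_i)$, then the Twisting Number Lemma, using at each stage that $-q$ is maximal and that a Legendrian singular fibre of twisting $-1$ forces $\xi$ overtwisted (the Lemma above) --- pins the dividing slopes of the three tori $\partial V_i$ down to values determined by $q$ and the attaching matrices $A_1,A_2,A_3$, and in particular forces the slope of $V_3$, measured in $\partial V_3$, to equal the integer $-n+k$. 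The complement $M=Y_n\setminus(V_1\cup V_2\cup V_3)$ is then a contact pair-of-pants bundle $\Sigma\times S^1$ with convex boundary, vertical Legendrian rulings, two dividing curves on each boundary component, and these prescribed slopes.

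Next one counts tight structures piece by piece and controls the reassembly. On $M$, maximality of the twisting forces the dividing set of a convex horizontal pants $\Sigma\times\{pt\}$ to contain no boundary-parallel arc, so by the classification of tight contact structures on circle bundles over a pair of pants there are only boundedly many candidates for $\xi|_M$, essentially indexed by a relative half-Euler class, and most are eliminated by the maximal twisting constraint. On each solid torus $V_i$ one uses Honda's classification \cite{honda:1} of tight structures with convex boundary of the pinned slope; the decisive contribution is the $n-k$ tight structures on $V_3$ coming from its integral boundary slope $-n+k$, the analogous counts for $V_1$ and $V_2$ being trivial once their slopes are pinned. Finally one reassembles: the gluing map on tight structures is not injective, so two choices differing by a basic slice attached to some $V_i$ may become isotopic in $Y_n$, and several normalised configurations are overtwisted and must be discarded. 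Carrying this out --- via relative Euler-class computations and the detaching/edge-rounding bookkeeping of \cite{honda:1} --- should leave at most $n-k$ isotopy classes for each $k$, and $\sum_{k}(n-k)=\tfrac{n(n-1)}{2}$ (the admissible $k$ being $1,\dots,n-1$).

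The hard part will be precisely this reassembly, i.e.\ getting a \emph{sharp} count after gluing. A naive product of continued-fraction terms overcounts, because a basic slice attached to one $V_i$ can be absorbed inside $\Sigma\times S^1$; conversely one has to check that no isotopy class is lost and that no surviving configuration is overtwisted. Disentangling which normalised pictures are genuinely distinct --- uniformly in $n$ and $k$, and matching ${\mathcal P}_n$ row by row --- is the delicate point, and it is where the arithmetic of the Seifert invariants of $-\Sigma(2,3,6n-1)$ actually enters.
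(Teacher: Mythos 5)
Your proposal follows essentially the same route as the paper: stratify by the maximal twisting number $t(\xi)=-6k+1$, pin the boundary slope of a standard neighbourhood $V_3$ of the third singular fibre to $-n+k$, invoke Honda's classification to get $n-k$ tight structures on $V_3$ (with the complement contributing uniquely, as in \cite{ghiggini-schonenberger}), and sum $\sum_{k=1}^{n-1}(n-k)=\frac{n(n-1)}{2}$. The paper's own proof is in fact terser than your sketch --- it simply cites the slope computation from the preceding proposition and Honda's count, deferring all the normalisation and gluing bookkeeping you describe to \cite{ghiggini-schonenberger} --- so the reassembly issues you flag are real but are handled by reference rather than in the text.
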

 
\begin{proof}
If $t(\xi)= -6k+1$ we can find a neighborhood $V_3$ of $F_3$ such that 
$\partial (Y_n \setminus V_3)$ has slope $- \frac{k}{6k+1}$. This slope corresponds to 
$-n +k$ on $\partial V_3$. By the classification of tight contact structures on solid tori
\cite[Theorem 2.3]{honda:1}, there are $n-k$ tight contact structures on $V_3$. 
Since $k$ ranges from $1$ to $n-1$, we have a total count of at most 
$\frac{n(n-1)}{2}$ tight contact structures on $Y_n$.
\end{proof}

\section{Heegaard Floer homology with twisted coefficients}
In the computation of the contact invariants of $\eta_{i,j}$ we will use the Heegaard
Floer homology groups with twisted coefficients. Since this theory is not as well known as 
the usual Heegaard Floer theory, we give a brief review of its properties. A more detailed 
exposition for the interested reader can be found in the original papers 
\cite{O-Sz:2,O-Sz:3} and in \cite{jabuka-mark}.

Let $Y$ be a closed, connected and oriented $3$-manifold. In the following, singular 
cohomology groups will always be taken with integer coefficients, unless a different abelian 
group is explicitly indicated. Given a module $M$
over the group algebra $\Z[H^1(Y)]$ and a $\spc$-structure $\mathfrak{t} \in \spc(Y)$,
in \cite[Section 8]{O-Sz:2} Ozsv\'ath and Szab\'o defined the Heegaard Floer homology 
group with twisted coefficients $\underline{\hf}(Y, \mathfrak{t} ; M)$, 
which has a natural structure of a  $\Z[H^1(Y)]$-module. When we omit 
the $\spc$-structure from the notation, we understand that we take the direct sum 
over all $\spc$-structures of $Y$. Defining $\underline{\hf}(Y, \mathfrak{t} ; M)$ as a 
$\Z[H_2(Y)]$-module would be a somewhat more natural choice and would lead to simpler 
formulas; however we have chosen to follow the exposition in the original papers.

 Two modules $M$ are of particular interest: the free module of rank one
$M= \Z[H^1(Y)]$, and the module $M=\Z$ with the trivial action of $H^1(Y)$. In the first 
case we will denote $\underline{\hf}(Y; M)=\underline{\hf}(Y)$, and in the second case
$\underline{\hf}(Y; M)=\hf(Y)$.
The automorphism $x \mapsto -x$ of $H^1(Y)$ induces an involution of $\Z[H^1(Y)]$ 
which we call {\em conjugation} and denote by $r \mapsto \bar{r}$. If $M$ is a 
module over $\Z[H^1(Y)]$, we define a new module $\overline{M}$ by taking $M$ as an
additive group, and equipping it with the multiplication $r \otimes m \mapsto \bar{r} \cdot 
m$.

To a cobordism $W$ from $Y_0$ to $Y_1$, in \cite{O-Sz:3}  Ozsv\'ath and Szab\'o 
associated morphisms between the Heegaard Floer homology groups with twisted 
coefficients. However there is an extra complication which is absent
in the untwisted case: the groups are usually modules over different rings, and we need 
to define a ``canonical'' way to transport coefficients across a cobordism. Let us define
\[ K(W) = \ker \left (H^2(W, \partial W) \to H^2(W) \right ). \]
Its group algebra $\Z[K(W)]$ has the structure of both a $\Z[H^1(Y_0)]$-module and 
a $\Z[H^1(Y_1)]$-module induced by the connecting homomorphism $\delta \colon 
H^1(\partial W) \to H^2(W, \partial W)$ for the relative long exact sequence of the pair
$(W, \partial W)$. Therefore we can define the $\Z[H^1(Y_1)]$-module $M(W)$ as
\[ M(W)= \overline{M} \otimes_{\Z[H^1(Y_0)]} \Z[K(W)]. \]

\begin{theorem}[{\cite[Theorem 3.8]{O-Sz:3}}] \label{map-twisted}
Any cobordism $W$ from $Y_0$ to $Y_1$ with a $\spc$-structure $\mathfrak{s} \in
\spc(W)$ induces an anti-$\Z[H^1(Y_0)]$-linear map
\[ F_{W, \mathfrak{s}} \colon \underline{\hf}(Y_0, \mathfrak{s}|_{Y_0}; M) \to
 \underline{\hf}(Y_1, \mathfrak{s}|_{Y_1}; M(W)) \]
which is well defined up to sign, right multiplication by invertible elements of $\Z[H^1(Y_1)]$,
and left multiplication by invertible elements of $\Z[H^1(Y_0)]$.
\end{theorem}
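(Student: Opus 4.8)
The plan is to follow Ozsv\'ath and Szab\'o's construction of cobordism maps in the untwisted setting, carrying the group-ring bookkeeping along at every stage. First I would fix a handle decomposition of $W$ relative to $Y_0$ and reorder the handles so that all $1$-handles are attached first, then all $2$-handles, then all $3$-handles; this presents $W$ as a composition $W = W_3 \circ W_2 \circ W_1$, so that it suffices to define the map for each elementary piece (the $3$-handle case being dual to the $1$-handle case) and for a trivial cobordism, where it should be the identity after a canonical identification of coefficient modules. Before doing so I would establish the formal functoriality of the coefficient transport: $K(W)$ and the module $M(W) = \overline{M} \otimes_{\Z[H^1(Y_0)]} \Z[K(W)]$ are natural under composition, so that there is a canonical module isomorphism $\bigl(M(W_1)\bigr)(W_2) \cong M(W_2 \circ W_1)$, well defined up to multiplication by units; this comes from comparing $H^*(W_i, \partial W_i)$ with $H^*(W, \partial W)$ via Mayer--Vietoris, and it is one source of the ``right and left multiplication by units'' ambiguity, another being the dependence of the twisted chain complexes on auxiliary choices.

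For a $1$-handle (dually, a $3$-handle) cobordism I would use a Heegaard diagram for $Y_1$ obtained from one for $Y_0$ by the standard stabilization-type move introducing a new pair of attaching curves meeting transversely in two points; the induced chain map counts the evident small triangles and identifies $\underline{\hf}(Y_0)$ with a direct summand of $\underline{\hf}(Y_1)$ exactly as in the untwisted case, and since a $1$-handle contributes an $S^1 \times S^2$ whose $H^1$ dies in $W$, the coefficient module is matched canonically. The substantive case is the $2$-handle cobordism $W_2$ along a framed link $\link \subset Y_1'$: here I would choose a Heegaard triple $(\Sigma, \alpha, \beta, \gamma)$ subordinate to a bouquet for $\link$ and define the twisted map by counting holomorphic triangles $\psi$ in the $\mathfrak{s}$-class, each weighted by the element of $\Z[K(W_2)]$ determined by the relative $2$-chain that $\psi$ sweeps out in $W_2$ (equivalently by the image of its homology class under $H_2(W_2, \partial W_2) \to K(W_2)$ coming from Poincar\'e--Lefschetz duality). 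One then checks this is a chain map over the relevant rings --- Gromov compactness for one-parameter families degenerating to a triangle plus a disk, together with additivity of the weight under splicing --- and that the induced map on homology does not depend on the complex structure, the $\gamma$-curves, or the bouquet; this is the twisted analogue of the naturality statements in \cite{O-Sz:3} and is proved by counting holomorphic quadrilaterals.

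Finally, to see that $F_{W, \mathfrak{s}}$ depends only on $(W, \mathfrak{s})$ and not on the handle decomposition, I would invoke the fact that any two handle decompositions of $W$ rel $Y_0$ differ by handle isotopies, handle slides, and creation or cancellation of canceling pairs. Isotopies are immediate; a cancellation corresponds to the triangle map of a Heegaard triple containing a small isotopy-translate pair being the identity up to a unit; and a handle slide corresponds to associativity of a composite of two triangle-counting maps, which is the twisted holomorphic-quadrilateral argument --- the weights multiply correctly because the relative $2$-chains in $W$ add --- with the sum over $\spc$-structures on the intermediate diagrams organized exactly as in the untwisted proof. I expect the main obstacle to be precisely this last point: arranging a coherent orientation system for the triangle and quadrilateral moduli spaces that is simultaneously compatible with the group-ring weights and with the transport isomorphisms of the first step, and tracking the resulting indeterminacy; this is what ultimately forces the ``up to sign and multiplication by units'' qualification in the statement.
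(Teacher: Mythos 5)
This statement is not proved in the paper at all: it is quoted from Ozsv\'ath--Szab\'o \cite{O-Sz:3} (Theorem 3.8 there), so there is no internal proof to compare against. Your outline --- handle decomposition into $1$-, $2$- and $3$-handles, twisted triangle counts weighted by classes in $K(W)$ coming from the relative $2$-chains swept out, and invariance under handle slides and cancellations via the quadrilateral/associativity argument, with the sign and unit ambiguities traced to orientation systems and to the auxiliary choices in the twisted complexes --- is essentially the construction carried out in that cited reference, so your approach coincides with the actual proof being invoked.
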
 \noindent
We will denote the equivalence class of such a map by $[F_{W, \mathfrak{s}}]$. The anti-linearity of
the cobordism maps is a consequence of the unnatural choice of coefficients. The reason for 
it is that $W$ induces the opposite orientation on $Y_0$ and hence a negative sign appears in 
comparing the Poincar\'e duality on $W$ and $Y_0$.

The cobordism maps fit into surgery exact sequences, of which we state only 
the simple case we use in the paper. 
\begin{theorem}[{\cite[Theorem 9.21]{O-Sz:2}}; cf. 
{\cite[Section 9]{jabuka-mark}}] \label{twisted-surgery} 
Let $Y$ be an integer homology sphere and $K \subset Y$ a knot. We identify framings 
on $K$ with integer numbers by assigning $0$ to the framing induced by an embedded 
surface with boundary in $K$, and denote by $Y_n(K)$
the manifold obtained by performing $n$ surgery along $K$.  Then there is an
exact triangle
\[ \xymatrix{
\hf(Y)[t^{-1},t] \ar[rr]^{F} & & \hf(Y_{-1}(K))[t^{-1},t] \ar[dl] \\
& \underline{\hf}(Y_0(K)) \ar[ul]
} \]
If $W$ is the $4$-dimensional cobordism from $Y$ to $Y_{-1}(K)$ induced by the 
surgery, $[\widehat{\Sigma}]$ is a generator of $H_2(W)$, and $\mathfrak{s}_k$ is the 
unique $\spc$-structure on $W$ such that 
$\langle c_1(\mathfrak{s}_k), [\widehat{\Sigma}] \rangle = 2k+1$, then
\[ F= \sum \limits_{k \in \Z} F_{W, \mathfrak{s}_k} \otimes t^k. \]
\end{theorem}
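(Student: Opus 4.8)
I would obtain the exact triangle as the specialization, to a knot in an integer homology sphere, of the general surgery exact sequence with twisted coefficients of Ozsv\'ath and Szab\'o (\cite[Section 9]{O-Sz:2}, see also \cite[Section 9]{jabuka-mark}). The existence and exactness of that general sequence, proved by the standard holomorphic-triangles argument carried out with twisted coefficients, is the deep input; the content of the present statement is then the specialization, the identification of the three coefficient modules, and the $\spc$-refinement of the map $F$.

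First I would fix the geometry. Put the framing $-1$ on $K$; then the three surgeries with relative slopes $\infty$, $0$ and $1$ are $Y$, $Y_{-1}(K)$ and $Y_0(K)$, joined by $2$-handle cobordisms $W_1\colon Y\to Y_{-1}(K)$, $W_2\colon Y_{-1}(K)\to Y_0(K)$ and $W_3\colon Y_0(K)\to Y$ which form the standard surgery triad. Since $Y$ and $Y_{-1}(K)$ are integer homology spheres we have $H^1(Y)=H^1(Y_{-1}(K))=0$, while $H^1(Y_0(K))\cong\Z$ and hence $\Z[H^1(Y_0(K))]\cong\Z[t^{-1},t]$; the twisting lives at this last vertex, where the term is the fully twisted group $\underline{\hf}(Y_0(K))$.

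Next I would identify the coefficient modules. The key is the computation, for each $W_i$, of $K(W_i)=\ker(H^2(W_i,\partial W_i)\to H^2(W_i))$ from the long exact sequence of the pair: for a $2$-handle cobordism whose ends are a homology sphere and a manifold with $b_1\le 1$, this group is either $0$ --- when the generator of $H_2$ has nonzero square, as for $W_1$, where $H_2(W_1)$ is spanned by $[\widehat{\Sigma}]$ with $[\widehat{\Sigma}]^2=-1$ --- or a copy of $\Z$, which happens exactly when that generator has square zero and $b_1$ is allowed to jump. Feeding these into $M(W)=\overline{M}\otimes_{\Z[H^1]}\Z[K(W)]$ and tracing the base-change isomorphisms of Remark \ref{simplifymap}, one checks that with the fully twisted theory placed at the $b_1=1$ vertex the other two vertices carry $\hf(Y)\otimes_\Z\Z[t^{-1},t]=\hf(Y)[t^{-1},t]$ and $\hf(Y_{-1}(K))[t^{-1},t]$, the trivial $\Z=\Z[H^1(\cdot)]$-action being extended freely; this produces the claimed exact triangle of $\Z[t^{-1},t]$-modules.

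Finally I would pin down the map $F$. On $W_1$ the $\spc$-structures are precisely the $\mathfrak{s}_k$ with $\langle c_1(\mathfrak{s}_k),[\widehat{\Sigma}]\rangle=2k+1$, $k\in\Z$, because $\langle c_1(\cdot),[\widehat{\Sigma}]\rangle\equiv[\widehat{\Sigma}]^2=-1\pmod{2}$; and under the identifications above the variable $t$ is arranged precisely to record this index, so that $\mathfrak{s}_k$ is matched with $t^k$. Tracking the connecting homomorphisms $\delta_i\colon H^1(\partial W_i)\to K(W_i)$ through the isomorphisms $e^a\otimes e^b\mapsto e^{\iota_W^!(a)+\delta^{-1}(b)}$ of Remark \ref{simplifymap} then identifies the $\mathfrak{s}_k$-component of $F$ with $F_{W_1,\mathfrak{s}_k}\otimes t^k$, whence $F=\sum_{k\in\Z}F_{W_1,\mathfrak{s}_k}\otimes t^k$; the residual indeterminacy of Theorem \ref{map-twisted} --- an overall sign and a unit $\pm t^m$ --- is absorbed by the choices of a generator of $H^1(Y_0(K))$ and of $[\widehat{\Sigma}]$. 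I expect this to be the main obstacle: confirming that the powers of $t$ match the $\spc$-structures under the specific normalization $\langle c_1(\mathfrak{s}_k),[\widehat{\Sigma}]\rangle=2k+1$ --- rather than under some other affine reparametrization --- requires carrying the connecting maps $\delta_i$ carefully around the whole triad, whereas exactness and the naturality of the cobordism maps are inherited directly from \cite{O-Sz:2} and Theorem \ref{map-twisted}.
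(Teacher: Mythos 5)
Your proposal is correct in spirit, but note that the paper itself offers no proof of this statement: it is imported verbatim from Ozsv\'ath--Szab\'o \cite[Theorem 9.21]{O-Sz:2} (with \cite[Section 9]{jabuka-mark} as an expository reference), which is exactly the deep input you also defer to. Since your additional work --- identifying the coefficient modules via $K(W)$ and matching the powers of $t$ with the $\spc$-structures $\mathfrak{s}_k$ (an indeterminacy absorbed by the unit ambiguity of Theorem \ref{map-twisted} in any case) --- is just the routine specialization to the integer-homology-sphere setting, your approach coincides with the paper's treatment of this result as a citation.
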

 
Maps between Heegaard  Floer homology groups with twisted coefficients satisfy 
composition formulas which are both more involved and more powerful than the analogous
formulas for ordinary coefficients. The source of the difference is that, given cobordisms 
$W_0$ from $Y_0$ to $Y_1$ and $W_1$ from $Y_1$ to $Y_2$, the coefficient ring $M(W)$
associated to the map $F_W$ induced by the composite cobordism $W=W_1 \cup W_0$ is
usually smaller than the coefficient ring $M(W_0)(W_1)$ associated to the composition 
$F_{W_1} \circ F_{W_0}$. More precisely:
\begin{lemma} \label{Ksequence}
There is an exact sequence
\begin{equation} \label{eqn: Ksequence} 
0 \to K(W) \stackrel{\iota} \to \dfrac{K(W_0) \oplus K(W_1)}{H^1(Y_1)} \to 
{\rm Im}(\delta)  \to 0
\end{equation}
where $\delta \colon H^1(Y_1) \to H^2(W)$ is the connecting homomorphism for the
Mayer--Vietoris sequence of the triple $(W_0, W_1, W)$.
\end{lemma}
\begin{proof}
The exact sequence (\ref{eqn: Ksequence}) follows from the commutative diagram: 
\[ \xymatrix{
 & H^2(W_0) \oplus H^2(W_1) & H^2(W) \ar[l] & H^1(Y_1) \ar[l]_{\delta} \\
H^1(Y_1) \ar[r] & H^2(W_0, \partial W_0) \oplus H^2(W_1, \partial W_1) \ar[r]  \ar[u] & 
H^2(W, \partial W)  \ar[u] & } \]
where the top row is the Mayer--Vietoris sequence and the 
bottom row is the relative cohomology sequence for the pair $(W, Y_1)$. In fact 
$H^2(W_0, \partial W_0) \oplus H^2(W_1, \partial W_1)  \cong H^2(W, Y_0)$ by homotopy 
equivalence and excision.
\end{proof}

 The inclusion $\iota \colon K(W) \to
\dfrac{K(W_0) \oplus K(W_1)}{H^1(Y_1)}$ gives rise to a projection 
\[\Pi \colon \Z[K(W_0)] \otimes_{\Z[H^1(Y_1)]} \Z[K(W_1)] \cong \Z \left [  
\dfrac{K(W_0) \oplus K(W_1)}{H^1(Y_1)} \right ] \longrightarrow \Z[K(W)] \]
defined by 
\[ \Pi(e^w) = \left \{ \begin{array}{l} e^w \quad \text{if } w=\iota(v) \text{ for some } v \\
0 \quad \text{otherwise.}
\end{array} \right. \]
which extends to a projection $\Pi_M \colon M(W_0)(W_1) \to M(W)$ for any
$\Z[H^1(Y_0)]$-module $M$. The composition law for twisted coefficients can be stated 
as follows.
\begin{theorem}[{\cite[Theorem 3.9]{O-Sz:3}}; cf.
{\cite[Theorem 2.9]{jabuka-mark}}] \label{twisted-composition}
Let $W=W_0 \cup_{Y_1} W_1$ be a composite cobordism with a $\spc$-structure 
$\mathfrak{s}$. Write $\mathfrak{s}_i=\mathfrak{s}|_{W_i}$. Then there are choices
of representatives for the maps $F_{W_0}$, $F_{W_1}$, and $F_W$ such that:
\begin{equation} \label{eqn: twisted composition}
[F_{W, \mathfrak{s}+ \delta h}] = [\Pi_M \circ F_{W_2, \mathfrak{s}_2} \circ 
e^{-h} \cdot F_{W_1, \mathfrak{s}_1}]
\end{equation}
where $h \in H^1(Y_1)$ and $\delta \colon H^1(Y_1) \to H^2(W)$ is the connecting 
homomorphism for the Mayer--Vietoris sequence.
\end{theorem}

To a contact structure $\xi$ on $Y$ we can associate an element $c(\xi,M) \in 
\underline{\hf}(-Y, \mathfrak{t}_{\xi})$ where $-Y$ denotes $Y$ with the opposite 
orientation, and $\mathfrak{t}_{\xi}$ denotes the canonical $\spc$-structure on $Y$ 
determined by $\xi$. This contact element is well defined up to sign and multiplication by
invertible elements in $\Z[H^1(Y)]$, and we will denote $[c(\xi,M)]$ its equivalence class.
When $M$ is clear from the context we will drop it from the notation.
\begin{theorem}[Ozsv\'ath--Szab\'o \cite{O-Sz:cont}] Let $\xi$ be a contact structure on 
$Y$. Then:
\begin{enumerate}
\item $[c(\xi,M)]$ is an isotopy invariant of $\xi$,
\item if $c_1(\xi)$ is a torsion cohomology class, then $[c(\xi,M)]$ is a set of
homogeneous elements of degree $- \frac{\theta(\xi)}{4} - \frac 12$, where $\theta$ is
Gompf's $3$-dimensional homotopy invariant defined in \cite[Definition 4.2]{gompf:1},
\item if $\xi$ is overtwisted, then $[c(\xi,M)]=\{ 0 \}$.
\end{enumerate}
\end{theorem}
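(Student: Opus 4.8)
The plan is to derive all three statements from the open book description of the contact class, together with Giroux's correspondence between contact structures and open books up to positive stabilization. Recall that, given an open book $(\Sigma,\phi)$ supporting $\xi$, one builds a pointed Heegaard diagram $(\Sigma',\alpha,\beta,w)$ for $-Y$ subordinate to it, carrying a distinguished intersection point $x_0$ lying in $\mathfrak t_\xi$, and sets $c(\xi,M)=[x_0\otimes 1]\in\underline{\hf}(-Y,\mathfrak t_\xi;M)$; the sign ambiguity, together with the ambiguity by multiplication by invertible elements of $\Z[H^1(Y)]$, comes from the choices of diagram and of the monomial decorating $x_0$. This is Ozsv\'ath--Szab\'o's construction from \cite{O-Sz:cont} in the untwisted case, and the twisted refinement simply keeps track of the periodic domains of the same diagram. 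Two remarks that will streamline the argument: the grading of $x_0\otimes 1$ is the Maslov grading of $x_0$ and hence does not depend on $M$, and it suffices to treat (2) and (3) with $M=\Z[H^1(Y)]$, since the chain-level computations there are coefficient-module independent.

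For (1), by Giroux two open books supporting isotopic contact structures become isomorphic after finitely many positive stabilizations, so it is enough to check that a single positive stabilization does not change $[x_0\otimes 1]$. Such a stabilization attaches a $1$-handle to the page and post-composes $\phi$ with a right-handed Dehn twist about a curve running once over the handle; on the Heegaard level this is exactly a connected-sum (index-one) stabilization of the diagram by the standard genus-one diagram for $(S^3,\xi_{\mathrm{std}})$, adding one new $\alpha$-curve, one new $\beta$-curve, and a single new transverse intersection point, which is forced into the new distinguished generator. Since $S^3$ has trivial $H^1$ the coefficient module is untouched, and stabilization invariance of $\underline{\hf}$ identifies the classes. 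For (3), I would use the results of Goodman and of Honda--Kazez--Mati\'c: $\xi$ is overtwisted precisely when it is supported by an open book whose monodromy is not right-veering, and such an open book may be taken to be a \emph{negative} stabilization of another; in a Heegaard diagram subordinate to such a negatively stabilized open book, the extra handle and left-handed twist produce an embedded bigon disjoint from $w$ with one corner at $x_0$, which forces $x_0$ to be a boundary over $\Z[H^1(Y)]$; hence $x_0\otimes 1$ is a boundary for every $M$, and $[c(\xi,M)]=\{0\}$.

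For (2), assume $c_1(\xi)$ is torsion, so the homology is $\Q$-graded. By the first paragraph it suffices to compute the Maslov grading of $x_0$ in the subordinate open-book diagram. I would evaluate it through the Maslov-index formula for generators associated to an open book --- equivalently, by relating $c(\xi)$ to the cobordism maps of contact $2$-handle attachments (Theorem \ref{map-twisted}), whose degree shift is $\tfrac14\bigl(c_1^2-2\chi-3\sigma\bigr)$ --- and match the outcome against Gompf's formula for the $d_3$-invariant of $\xi$, which is the quantity $\theta(\xi)$ up to the normalization built into the statement; this is how Ozsv\'ath--Szab\'o treat the Stein fillable case, and the general case follows because both $\deg c(\xi,M)$ and $-\theta(\xi)/4-1/2$ are determined by the same open-book data. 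This yields $\deg c(\xi,M)=-\theta(\xi)/4-1/2$.

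I expect the grading statement (2) to be the main obstacle, precisely because it requires reconciling the Heegaard Floer grading conventions with Gompf's homotopy-theoretic invariant: one must verify, with all signs and normalizations, that the Maslov grading of the open-book generator varies under contact $2$-handle cobordisms by exactly $\tfrac14(c_1^2-2\chi-3\sigma)$ and that this matches the variation of $-\theta(\xi)/4-1/2$ --- Ozsv\'ath--Szab\'o carry out part of this by a direct Maslov-index computation in a subordinate diagram. By contrast, parts (1) and (3) are essentially formal once the subordinate Heegaard diagram and the right-veering criterion are in hand.
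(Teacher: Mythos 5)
The paper does not prove this theorem: it is quoted verbatim from Ozsv\'ath--Szab\'o \cite{O-Sz:cont} (with the twisted-coefficient refinement taken as standard), so your proposal can only be judged against the literature, not against an argument in the text. What you sketch is essentially the Honda--Kazez--Mati\'c route: the contact class as the distinguished generator $x_0$ of a Heegaard diagram subordinate to a supporting open book, invariance via Giroux stabilization, vanishing via negative stabilization/right-veering. That is a legitimate route, but it is not Ozsv\'ath--Szab\'o's construction in \cite{O-Sz:cont}, which defines $c(\xi)$ through the fibered binding (the bottom generator of the Floer homology of the fibration/knot filtration). If you take the $x_0$-description as the definition, you must either invoke the nontrivial equivalence of the two classes or accept that you are proving properties of a possibly different invariant --- an issue that matters here, since the paper later uses the Ozsv\'ath--Szab\'o normalization (Theorem \ref{natural}, and the appeal to \cite{O-Sz:genus} in Lemma \ref{hf1}). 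Also, for (1) you only treat positive stabilization; independence of the choice of arc basis (arc slides) for the subordinate diagram must be checked as well, and it is worth noting that the original proof of (3) is different in spirit: Eliashberg's classification plus connected sum with an overtwisted $S^3$ whose invariant vanishes for grading reasons, i.e.\ it actually leans on (2).

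The concrete gaps are in (2) and (3). For (3), an embedded bigon missing $w$ with an end at $x_0$ only produces a term $\pm e^h x_0$ in $\partial y$ for some auxiliary generator $y$; to conclude $[x_0\otimes 1]=\{0\}$ you must also show that $\partial y$ has no other contributions (or that they cancel), and with twisted coefficients that the resulting group-ring coefficient is a unit --- this is exactly where the work in the negative-stabilization vanishing argument lies, and you also use without proof that every overtwisted structure is supported by a negatively stabilized open book. For (2), what you write is a plan, not a proof: the assertion that $\deg c(\xi,M)$ and $-\frac{\theta(\xi)}{4}-\frac12$ ``are determined by the same open-book data'' does not establish their equality. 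One needs an anchor computation (say the standard tight $S^3$) together with a verification that both quantities change by the same amount under the moves relating the general case to the anchor --- for instance Gompf's formula for the variation of $\theta$ under a Stein $2$-handle attachment matched against the degree shift $\frac14\left(c_1^2-2\chi-3\sigma\right)$ of the cobordism maps, plus the homotopy-invariance of $\theta$ --- or else a direct Maslov-index computation in the subordinate diagram; none of this is carried out, and you yourself flag it as the main obstacle. So parts (1) and (3) are plausible but incomplete sketches, and part (2) remains unproved as written.
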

The behaviour of  the contact invariant is contravariant with respect to Legendrian
surgeries, as described by the following theorem:
\begin{theorem}\label{natural}
Let $(Y_0, \xi_0)$ and $(Y_1, \xi_1)$ be contact manifolds, and let
$(W, J)$ be a Stein cobordism from $(Y_0, \xi_0)$ to $(Y_1, \xi_1)$\footnote{Our convention is
that Stein cobordisms always go from the negative end to the positive end. This is the natural convention 
from the point of view of topology. Some authors use the opposite convention, which is more natural for 
symplectic field theory.}  which is 
obtained by Legendrian surgery on some Legendrian link in $Y_0$. If $\mathfrak{k}$ is the 
canonical $\spc$-structure on $W$ for the complex structure $J$, then:
\[ [F_{W, \mathfrak{s}}(c(\xi_1, M))] = \left \{ \begin{array}{l} [c(\xi_0, M(W))] \text{ if }
\mathfrak{s} = \mathfrak{k} \\ \{ 0 \} \text{ if } \mathfrak{s} \ne \mathfrak{k}.
\end{array} \right. \]
\end{theorem}
This theorem is essentially due to Ozsv\'ath and Szab\'o \cite{O-Sz:cont}, but an explicit 
statement has been given by Lisca and Stipsicz \cite{lisca-stipsicz:4} for untwisted 
coefficients. Here we state a generalization of \cite[Lemma 2.10]{ghiggini:3} to twisted 
coefficients and to links with more than one component. However the proof remains 
unchanged. 

In Theorem \ref{natural} the map $F_{W, \mathfrak{s}}$ is actually induced by the {\em
opposite} cobordism, which goes from $-Y_1$ to $-Y_0$, and which is often denoted by 
$\overline{W}$. We chose to drop this extra decoration from the notation because, in the 
computation of the Ozsv\'ath--Szab\'o invariants, our maps will {\em always} be induced by
 the opposite of the cobordisms constructed by Legendrian surgeries.

For any contact structure $\xi$ on a $3$--manifold $Y$ we denote by 
$\overline{\xi}$ the contact structure on $Y$ obtained from $\xi$ by inverting the 
orientation of the planes.  This operation is called {\em conjugation}.
 In Heegaard Floer homology  there is an involution
$\mathfrak{J}: \widehat{HF}(-Y) \to \widehat{HF}(-Y)$ 
 defined in \cite[Section 2.2]{O-Sz:2} and \cite[Section 5.2]{O-Sz:3} which is 
closely related to conjugation of contact structures. We are going to state and use its 
main property only for the untwisted version of the contact invariant.
\begin{theorem}[{\cite[Theorem 2.10]{ghiggini:3}}] \label{coniugazione}
Let $(Y, \xi)$ be a contact manifold. Then 
\[ c(\overline{\xi})= \mathfrak{J}(c(\xi)). \]
\end{theorem}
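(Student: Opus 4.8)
The plan is to compute both $c(\overline{\xi})$ and $\mathfrak{J}(c(\xi))$ from one and the same open book and to observe that the passage from $\xi$ to $\overline{\xi}$ is, on the level of Heegaard diagrams, exactly the symmetry that defines $\mathfrak{J}$. Recall that if an open book $(S,\phi)$ supports $(Y,\xi)$, then the Honda--Kazez--Mati\'c construction produces a pointed Heegaard diagram $\mathcal{H}(S,\phi) = (\Sigma, \alpha, \beta, z)$ representing $-Y$ --- with $\Sigma$ the double of the page, the $\alpha$- and $\beta$-curves built from a basis of properly embedded arcs on $S$ and from the image of such a basis under $\phi$, and $z$ in a specific region adjacent to the binding --- together with a distinguished intersection point $\mathbf{x}(S,\phi) \in \mathbb{T}_\alpha \cap \mathbb{T}_\beta$, supported by the near-binding intersections, whose homology class is the contact invariant $c(\xi)$. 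On the other hand $\mathfrak{J}$ is induced, up to the naturality isomorphisms that identify the Heegaard Floer groups attached to different diagrams of the same manifold, by the tautological bijection between $\mathcal{H}(S,\phi)$ and its conjugate diagram $\overline{\mathcal{H}} = (-\Sigma, \beta, \alpha, z)$: this conjugate diagram again represents $-Y$, the obvious identification of intersection points interchanges holomorphic and anti-holomorphic disks (hence intertwines the two differentials), and it carries the summand for a $\spc$-structure $\mathfrak{t}$ onto the summand for $\overline{\mathfrak{t}}$.

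With this in place the argument has three steps. First I would identify a compatible open book for $\overline{\xi}$: reversing the co-orientation of $\xi$ makes the binding negatively transverse and the pages negatively contact, so reversing the orientation of the page together with the direction of the monodromy yields an open book of the form $(-S, \phi^{-1})$ --- the same underlying fibration of $Y$ with the opposite conventions --- which supports $(Y, \overline{\xi})$. Second I would run the Honda--Kazez--Mati\'c recipe on $(-S, \phi^{-1})$ and check that $\mathcal{H}(-S,\phi^{-1})$ is, up to isotopy of the diagram, nothing but the conjugate $\overline{\mathcal{H}}$ of $\mathcal{H}(S,\phi)$: reversing the orientation of the page reverses the orientation of the Heegaard surface $\Sigma$, and interchanging the role of ``basis of arcs'' with ``its image under the monodromy'' interchanges the $\alpha$- and $\beta$-curves, while the region containing $z$ is unaffected. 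Third I would verify that the distinguished generators match: $\mathbf{x}(-S,\phi^{-1})$ is again the tuple of near-binding intersection points, which is exactly the image of $\mathbf{x}(S,\phi)$ under the tautological bijection. Taking homology classes and using the invariance of both $c(\cdot)$ and $\mathfrak{J}$ under the naturality maps then gives $c(\overline{\xi}) = \mathfrak{J}(c(\xi))$.

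The hard part is the second step: one must match \emph{all} of the orientation data in the Honda--Kazez--Mati\'c recipe --- the orientation of $\Sigma$, the ordering of the two handlebodies (equivalently of $\mathbb{T}_\alpha$ and $\mathbb{T}_\beta$), the sign of the monodromy, and the placement of the basepoint --- so that the open book of $\overline{\xi}$ produces \emph{literally} the conjugate diagram, and so that the near-binding generators are genuinely carried to one another by the tautological map and not merely shown to be homologous. (As a consistency check the statement respects gradings, since $\mathfrak{J}$ preserves the absolute $\Q$-grading, which forces $\theta(\overline{\xi}) = \theta(\xi)$.) For the sub-case in which $\xi$ is Stein fillable there is an alternative, diagram-free argument: write $(Y,\xi)$ as Legendrian surgery on a Legendrian link in $(S^3, \xi_{\mathrm{std}})$; conjugation commutes with Legendrian surgery --- reversing the co-orientation leaves Legendrian knots and their Thurston--Bennequin framings unchanged but negates rotation numbers, hence conjugates the $\spc$-structure carried by the surgery cobordism --- so combining Theorem \ref{natural} with the intertwining $\mathfrak{J}\circ F_{W,\mathfrak{s}} = F_{W,\overline{\mathfrak{s}}}\circ \mathfrak{J}$ and the fact that $\mathfrak{J}$ fixes the generator of $\hf(-S^3) \cong \Z$ reduces the identity to the trivial case $\overline{\xi_{\mathrm{std}}} = \xi_{\mathrm{std}}$ on $S^3$.
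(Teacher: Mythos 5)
Your strategy is genuinely different from the source of this statement: the paper itself offers no proof, citing \cite{ghiggini:3}, where the identity is obtained from Ozsv\'ath and Szab\'o's original definition of the contact class --- via the binding of a compatible open book regarded as a fibered knot and the induced filtration (equivalently via $0$--surgery on the binding) --- together with the conjugation equivariance of that construction; the Honda--Kazez--Mati\'c cycle you rely on postdates that argument. Your route is viable and buys a very concrete, diagram-level proof: the reversed open book $(-S,\phi^{-1})$ does support $\overline{\xi}$, and the expected conclusion of your second step is correct. But to make it a proof you must add two things beyond the orientation bookkeeping you already flag. First, the statement concerns the Ozsv\'ath--Szab\'o invariant, so you need to invoke the Honda--Kazez--Mati\'c theorem identifying their distinguished class with $c(\xi)$. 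Second, the HKM diagram of $(-S,\phi^{-1})$ is \emph{not} literally the conjugate diagram for any choice of arc basis: with the original data $\alpha_i=a_i\cup a_i$, $\beta_i=b_i\cup\phi(b_i)$, a curve of the form $c_i\cup c_i$ can never equal $b_i\cup\phi(b_i)$ unless $\phi(b_i)=b_i$. The match is achieved only after taking the arc basis of the reversed book to be the push-offs $b_i$ (whose push-offs in $-S$ are the $a_i$) \emph{and} composing with the self-diffeomorphism of the doubled page that is the identity on one copy and $\phi$ on the other; this diffeomorphism is not induced by an ambient isotopy, so you must then track, through the naturality isomorphisms, that its effect combined with the tautological identification really is $\mathfrak{J}$, and that the near-binding generator (which does sit at the same geometric intersection points) is preserved. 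That naturality accounting is the substantive remaining work, not a routine check. Finally, your alternative diagram-free argument for the Stein fillable case is fine but would not cover the way Theorem \ref{coniugazione} is used in this paper, where it is applied to $\eta^n_{n-2,0}$, which is not Stein fillable for $n\ge 3$.
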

We end this section with a remark which explains how, under certain circumstances, it is 
possible to re-interpret the cobordism maps as {\em linear} maps by making the appropriate 
identifications between the coefficient rings.
\begin{lemma} \label{simplifymap}
Let $\iota_0 \colon Y_0 \to W$ and $\iota_1 \colon Y_1 \to W$ be the inclusions. 
If the maps $(\iota_0)_* \colon H_2(Y_0) \to H_2(W)$ and $(\iota_1)_* \colon H_2(Y_1) 
 \to H_2(W)$ are injective and ${\rm Im}(\iota_0)_* = {\rm Im}(\iota_1)_*$ 
we can define an isomorphism
$(\iota_W)_* = (\iota_1)_*^{-1} (\iota_0)_* \colon H_2(Y_0) \to H_2(Y_1)$. After 
composing with Poincar\'e dualities, we obtain an isomorphism
\[ \iota_W^! \colon H^1(Y_0) \to H^1(Y_1), \]
which induces a structure of $\Z[H^1(Y_0)]$-module on $\Z[H^1(Y_1)]$. Moreover with 
this structure there is an anti-$\Z[H^1(Y_0)]$-linear isomorphism  
\[ \overline{\Z[H^1(Y_0)]} \otimes_{\Z[H^1(Y_0)]} \Z[K(W)] \cong \Z[H^1(Y_1)]. \]
\end{lemma}
When the hypothesis of Lemma \ref{simplifymap} is satisfied we can interpret the 
cobordism 
map $F_W$ as a $\Z[H^1(Y_0)]$-{\em linear} map
\[ F_W \colon \underline{\hf}(Y_0) \to \underline{\hf}(Y_1). \]
\begin{proof}
Let us decompose 
$$\delta \colon H^1(\partial W) = H^1(Y_0) \oplus H^1(Y_1) \to K(W)$$
as $\delta = \delta_0 \oplus \delta_1$. Each $\delta_i \colon H^1(Y_i) \to K(W)$ is an
isomorphism because $\delta \colon H^1(\partial W) \to H^2(W, \partial W)$ corresponds 
to $\iota \colon H_2(\partial W) \to H_2(W)$ by Poincar\'e-Lefshetz duality. (See 
\cite[Theorem~28.18]{ga}). 
Then we have a commutative diagram:
 \[ \xymatrix{
H^1(Y_0) \ar[rr]^{- \iota_W^!} \ar[rd]_{\delta_1} &  &  H^1(Y_1) \ar[ld]^{\delta_2} \\
& K(W) & } \]
In fact, let $c_0$ be the Poincar\'e dual of $a \in H^1(Y_0)$, and $c_1$ the Poincar\'e
dual of $\iota^!_W(a) \in H^1(Y_1)$. Then $(\iota_1)_*(c_1)- (\iota_0)_*(c_0) =0$ in
$H_2(W)$, which implies that $c_1 - c_0 = \partial C$ for some class 
$C \in H_3(W, \partial W)$. Taking Poincar\'e duals on $W$ and $\partial W$ we obtain that
$a + \iota_W^!(a)$ is in the image of the restriction map 
$H^1(W) \to H^1(\partial W)$ --- the change of sign because $W$ induces the opposite 
orientation on  $Y_0$. Therefore $\delta(a + \iota_W^!(a))=0$, from which the 
commutativity of the diagram follows.

The isomorphism between $\overline{\Z[H^1(Y_0)]} \otimes_{\Z[H^1(Y_0)]} \Z[K(W)]$ and 
$\Z[H^1(Y_1)]$ is given by the map 
\[ e^a \otimes e^b \mapsto e^{\iota_W^!(a)+ \delta_2^{-1} (b)}. \]
This map is well defined by the universal property of the tensor product, because it is 
induced by a $\Z[H^1(Y_0)]$-bilinear map  
\[ \overline{\Z[H^1(Y_0)]} \times  \Z[K(W)] \to \Z[H^1(Y_0)]. \]
In fact for all $a' \in H^1(Y_0)$ we have 
\begin{align*} 
(e^{a-a'}, e^b) & \mapsto e^{\iota_W^!(a-a')+ \delta_2^{-1}(b)} \\
(e^a, e^{\delta_1(a')+b}) & \mapsto e^{\iota_W^!(a) + \delta_2^{-1}(\delta_1(a')+b)},
\end{align*}
but $e^{\iota_W^!(a-a')+ \delta_2^{-1}(b)}= e^{-a'} \cdot e^{\iota_W^!(a)+ \delta_2^{-1}(b)}$.
\end{proof}
\section{Computation of the Ozsv\'ath--Szab\'o contact 
invariants}
We are going to sketch the strategy of the computation as a guide for the 
reader. The topological input is a Legendrian surgery along a Legendrian link
$\leg \cup \mathcal{C}$ which takes the contact manifold $(Y_{\infty}, \xi_{i+1})$ 
to $(Y_n, \eta_{i,j}^n)$.  This Legendrian surgery factors in two ways, one through 
$(Y_{\infty}, \xi_i)$ and one through $(Y_{n+1}, \eta_{i+1,j}^{n+1})$, depending on 
whether we perform the surgery first along $\leg$, and then along $\mathcal{C}$, or 
vice versa.  The knot $\leg$ is a stabilization of $F$ and $\mathcal{C}$ is a link which 
is naturally Legendrian in each $(Y_\infty, \xi_i)$ for $i>0$. 

Then we have homomorphisms in Heegaard Floer homology mapping the 
invariants of the tight contact structures on $Y_n$ to the invariants of the 
tight contact structures on $Y_{n+1}$ above the bottom row of the triangle 
${\mathcal P}_{n+1}$. We compute these invariants by an inductive 
argument using the fact that the invariants of the tight contact structures
on the bottom row span $\widehat{HF}(- Y_n)$ in the appropriate degree 
\cite{plam:1}.

A feature of the computation is that it requires the use of Heegaard Floer
homology with twisted coefficients.
This is somewhat surprising as the manifolds $Y_n$ are integer homology
spheres and therefore they carry no nontrivial twisted coefficient system. The reason of the
effectiveness of twisted coefficients is twofold. On the one hand the large indeterminacy 
of the contact invariant with twisted coefficients allows the contact invariants 
$c(\eta^n_{0,j})$ to be mapped to different representatives of $c(\xi_0)$ --- see Lemma 
\ref{pluto}. On the other hand the contact invariants  of $(Y_{\infty}, \xi_i)$ with twisted 
coefficients are all nonzero and pairwise distinct for $i \ge 0$, while the untwisted ones 
vanish for $i>0$ by \cite[Theorem 1]{ghiggini-honda-vhm}. 

\subsection{The surgery construction}\label{construction}

We find the Legendrian link $\leg \cup \mathcal{C}$ by studying open book 
decompositions of $(Y_\infty, \xi_i)$ and $(Y_n, \eta_{i, j}^n)$.  All knots will be oriented 
and $\overline{K}$ will be used to denote $K$ with its orientation reversed.

\begin{figure}[htp]
	\labellist
	\small\hair 2pt

	\pinlabel $F$ at 52 183
	\pinlabel $\leg_{l,r}$ at 27 179
     \pinlabel {$r$} at 86 281
	\pinlabel {$l$} at 163 309

	\pinlabel {$\left\{ \phantom{\begin{matrix} B \\B \end{matrix} }\right.$} at 102 281
	\pinlabel {$\left\{ \phantom{\begin{matrix} B \\B \end{matrix} }\right.$} at 180 309
	\pinlabel \rotatebox{90}{$ \left\{ \phantom{\begin{matrix} 
B \\B\\B\\B\\B\\B\\B\\B\\B\\B\\B\\B\\B\\B \end{matrix} }\right.$} at 208 -5
	\pinlabel {$i$ times} at 208 -30 

	\pinlabel - at 81 159
	\pinlabel - at 137 159
	\pinlabel - at 149 159
	\pinlabel - at 202 159
	\pinlabel - at 214 159
	\pinlabel - at 270 159
	\pinlabel - at 282 159
	\pinlabel - at 336 159
	\pinlabel - at 82 208
	\pinlabel - at 153 208
	\pinlabel - at 229 208
	\pinlabel - at 284 208
	\pinlabel - at 341 208
	\pinlabel + at 123 62
	\pinlabel + at 192 62
	\pinlabel + at 259 62
	\pinlabel + at 324 62
	\pinlabel + by -.25 0 at 127 239 
	\pinlabel + by -.25 0 at 200 245 
	\pinlabel + at 268 270
	\pinlabel + at 323 271
	\pinlabel + at 377 270 
	\pinlabel + at 123 289
	\pinlabel + at 123 271	
	\pinlabel + at 197 317
	\pinlabel + at 197 299
	\endlabellist
\centering
\includegraphics[height=3in]{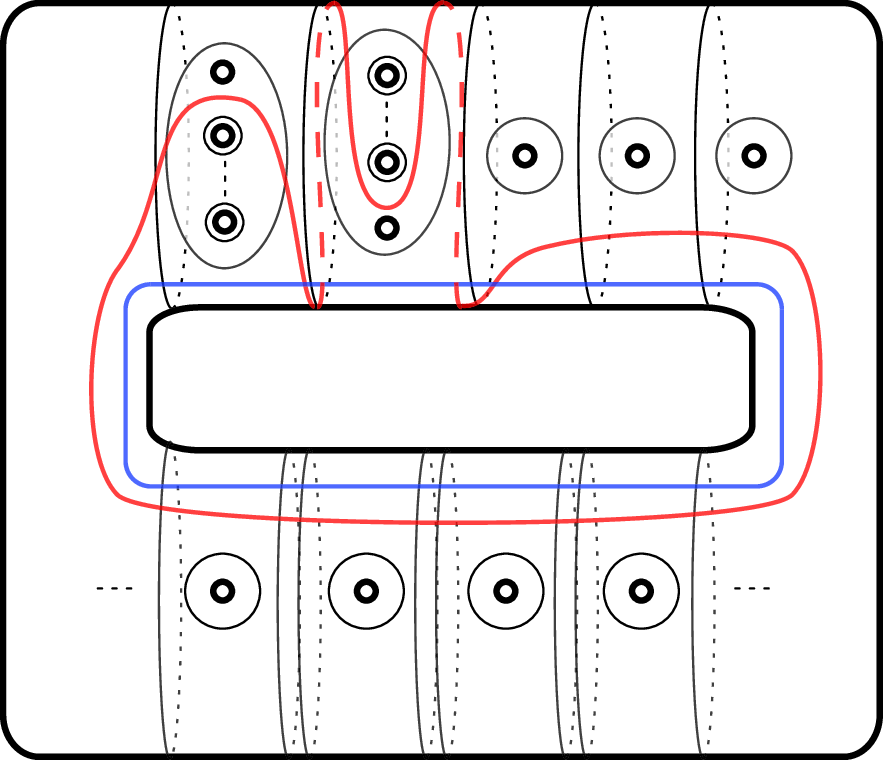}
\vspace{20pt}
\caption{\label{fig:books} Genus one open book decompositions of the contact 
manifolds $(Y_\infty, \xi_i)$ and $(Y_n,\eta_{i,j}^n)$ where $j = l-r$ and $n = l+r+i+2$.  
The thick circles are the boundary of the page and the label $i$ determines how many 
times the region along the bottom should be repeated.  To get an open book compatible 
with $(Y_\infty, \xi_i)$, take the monodromy as the product of Dehn twists along all curves, 
except $F$ and $\leg_{l,r}$, with signs as indicated.  Add a positive twist along the curve 
$\leg_{l,r}$ to get $(Y_n, \eta_{i,j})$.  The curve $F$ is shown for comparison.} 
\end{figure}

\begin{prop} \label{prop:surgery}  There is a Legendrian link $\leg_{l,r} \cup 
\mathcal{C}$ in $(Y_\infty, \xi_{i+1})$, for every $i \ge 0$, so that Legendrian surgery 
along $\leg_{l,r} \cup \mathcal{C}$ gives the contact manifold $(Y_n, \eta_{i,j}^n)$, 
while surgery along $\leg_{l,r}$ gives the contact manifold $(Y_{n+1}, 
\eta_{i+1, j}^{n+1})$, where $j = l-r$ and $n = l+r+i+2$.  For every $i$, the links 
$\leg_{l,r} \cup \mathcal{C}$ are smoothly isotopic in $Y_\infty$.  Further, the image of 
$\leg_{l,r}$ in $\xi_{i+1}$ under the surgery along $\mathcal{C}$ can be identified with 
$\leg_{l,r}$ in $\xi_i$.
\end{prop}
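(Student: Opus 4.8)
The plan is to prove Proposition~\ref{prop:surgery} by working entirely at the level of open book decompositions, using the genus one open books drawn in Figure~\ref{fig:books} together with the translation between Legendrian surgery, open books, and Dehn twists. First I would fix, for each $i > 0$, the abstract open book $(\Sigma_i, \phi_i)$ with page $\Sigma_i$ the surface shown in the figure (a genus one surface with some number of boundary components determined by $i$) and monodromy $\phi_i$ the product of the indicated Dehn twists along \emph{all} of the drawn curves except $F$ and $\leg_{l,r}$, with the specified signs. The first claim to verify is that this open book is compatible with $(Y_\infty, \xi_{i+1})$ -- or rather with $(Y_\infty, \xi_i)$ -- by checking that the monodromy is a product of positive and negative Dehn twists matching the Giroux description of $\xi_i$ from Section~\ref{sec:construction}. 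Concretely, I would start from the standard open book for $(Y_\infty, \xi_i)$ coming from the $0$-surgery on the right-handed trefoil presentation of $Y_\infty$ and the function $\varphi_i$, and perform a sequence of positive and negative stabilizations (Giroux stabilizations, i.e.\ plumbings of Hopf bands) to arrive at the genus one page in the figure; each stabilization adds one boundary-parallel handle and the corresponding $\pm$ Dehn twist, which accounts for the bottom region being repeated $i$ times. This is the bookkeeping-heavy step and I expect it to be the main obstacle: one must match the signs in the figure precisely against the sign conventions for $\varphi_i$ and for Legendrian versus contact framings.

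Next I would use the fact, due to Giroux and made precise in this context, that adding a positive Dehn twist along a homologically essential, non-boundary-parallel curve $\gamma$ on the page corresponds exactly to Legendrian surgery on the Legendrian knot $L_\gamma \subset (Y_\infty, \xi_i)$ realized by $\gamma$ on a convex page (the curve $\gamma$ sits on a page, which is convex, so it is canonically Legendrian with the page framing, and a positive Dehn twist is $(-1)$-surgery with respect to that framing, i.e.\ Legendrian surgery). Applying this to $\gamma = \leg_{l,r}$: the open book obtained from $(\Sigma_i, \phi_i)$ by adding one positive Dehn twist along $\leg_{l,r}$ has monodromy equal to $\phi_i$ composed with $t_{\leg_{l,r}}$, hence on one hand is compatible with $(Y_n, \eta_{i,j}^n)$ -- this requires identifying the resulting open book with the one for $\eta_{i,j}^n$, using $j = l-r$ to track the rotation number (the choice of $l$ versus $r$ records the signs of the stabilizations of $F$, hence $\mathtt{rot}(F_{i,j}) = l - r = j$) and $n = l+r+i+2$ to track $tb$ -- and on the other hand it is, by the Giroux correspondence just invoked, the result of Legendrian surgery on $\leg_{l,r} \subset (Y_\infty, \xi_i)$. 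To get the statement as phrased, with the link living in $(Y_\infty, \xi_{i+1})$, I would let $\mathcal{C}$ be the Legendrian link realized by the collection of curves in the ``$i$-times-repeated'' bottom region that, when Dehn-twisted, convert $(\Sigma_i, \phi_i)$-compatible $\xi_i$ into $(\Sigma_{i+1}, \phi_{i+1})$-compatible $\xi_{i+1}$; then surgery on $\mathcal{C}$ alone sends $(Y_\infty, \xi_{i+1})$ to $(Y_\infty, \xi_i)$, and the two factorizations of the surgery on $\leg_{l,r} \cup \mathcal{C}$ are exactly the two orders in which one can add the corresponding Dehn twists to the page.

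For the remaining assertions: that the links $\leg_{l,r} \cup \mathcal{C}$ are smoothly isotopic in $Y_\infty$ for all $i$ follows because forgetting the contact structure they are all built from the same curves on (stabilizations of) a fixed page of a fixed open book for $Y_\infty$, and Giroux stabilization does not change the underlying manifold or the smooth isotopy class of curves disjoint from the stabilizing region; and that the image of $\leg_{l,r} \subset (Y_\infty, \xi_{i+1})$ under the surgery along $\mathcal{C}$ is identified with $\leg_{l,r} \subset (Y_\infty, \xi_i)$ is automatic from the open book picture, since performing the $\mathcal{C}$-surgery just amends the monodromy by the $\mathcal{C}$-twists while leaving the curve $\leg_{l,r}$ literally where it sits on the page -- so it is the same Legendrian knot, now viewed on a page of the open book compatible with $\xi_i$. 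Throughout, the one genuinely delicate point remains the sign and framing bookkeeping in matching Figure~\ref{fig:books} to the Giroux description of $\xi_i$ and to the stabilization pattern defining $F_{i,j}$; everything else is a formal consequence of the open-book/Legendrian-surgery dictionary.
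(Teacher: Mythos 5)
Your overall framework (realize everything on the genus one open books of Figure~\ref{fig:books}, interpret adding a positive Dehn twist along a page curve as Legendrian surgery, and factor the surgery in the two orders) is the same as the paper's, but the step you dismiss as definitional is exactly the step that carries the mathematical content, and as written it has a genuine gap. You define $\mathcal{C}$ as the curves in the repeated bottom region ``that, when Dehn-twisted, convert the $\xi_i$-compatible open book into the $\xi_{i+1}$-compatible one,'' and then assert that surgery on $\mathcal{C}$ sends $(Y_\infty,\xi_{i+1})$ to $(Y_\infty,\xi_i)$ with $\leg_{l,r}$ going to $\leg_{l,r}$ ``automatically.'' This is not automatic, and the direction of your description is backwards: the open books for $\xi_i$ and $\xi_{i+1}$ in Figure~\ref{fig:books} have \emph{different pages} (different numbers of boundary components), so the two monodromies do not differ merely by composing with twists along curves on a common page. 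What is true, and what must be proved, is that adding the positive twists $\mathcal{C}$ to the $\xi_{i+1}$ open book yields an open book that is only \emph{Hopf-stably} equivalent to the $\xi_i$ open book. The paper establishes this with Lemma~\ref{lm:braid} (the ``braid relation''), proved via the lantern relation: one repeatedly rewrites the local monodromy and positively destabilizes to excise the extra Giroux torsion region. Without this (or an equivalent argument) you have no proof that the result of Legendrian surgery along $\mathcal{C}$ is tight, let alone isotopic to $\xi_i$ --- and note that this is precisely the counterintuitive point, since Legendrian surgery here \emph{decreases} Giroux torsion.

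The same gap infects your last claim. Since the post-surgery open book is not literally the Figure~\ref{fig:books} open book for $\xi_i$ but only a stabilization of it, identifying the image of $\leg_{l,r}$ with the curve $\leg_{l,r}$ on the $\xi_i$ page requires tracking it through the lantern-relation destabilizations; this is why Lemma~\ref{lm:braid} includes the statement that the horizontal arc $K$ is Legendrian and sits on a page of \emph{both} local open books, compatibly with a single contact structure (so the identification is by an ambient contact isotopy), and why the proof checks that the destabilizing arc is disjoint from $K$ and that the relation applies in the presence of twists parallel to the segment. Separately, your route to compatibility of the figure's open books with $\xi_i$ (stabilizing a ``standard'' open book built from $\varphi_i$) is not how the paper argues --- it uses weak fillability of the compatible structures, Honda's classification on torus bundles, the $i=0$ case, and the explicit basic-slice local model to locate $F$ --- and your version remains an unexecuted bookkeeping claim; but the essential missing idea is the lantern/braid-relation mechanism.
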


We will prove Proposition \ref{prop:surgery} by first constructing open book 
decompositions compatible with $(Y_\infty, \xi_{i+1})$ which have the Legendrian knot 
$F$ sitting naturally on a page, and see how to stabilize $F$ to get the knot $\leg_{l,r}$, 
still sitting on the page of a compatible open book.  We then show how to modify this 
open book by adding positive Dehn twists to its monodromy to get an open book 
compatible with $\xi_{i}$, noting that this takes the knot $F$ in $\xi_{i+1}$ to the knot $F$ in 
$\xi_{i}$.  

We deal primarily with open books in their abstract form: as a surface $S$ with boundary 
together with a self-diffeomorphism $\phi$, usually presented as a product of Dehn twists
along curves labeled either $+$ or $-$ on a diagram of $S$.  In Figure~\ref{fig:books}, such a 
diagram is given for the contact manifold $(Y_\infty, \xi_i)$.  The surface $S$ is a torus 
with $3i+n+3$ open disks removed. The monodromy consists of positive 
(right-handed) Dehn twists about circles parallel to (most) boundary circles of $S$ 
together with negative (left-handed) Dehn twists about certain pairwise disjoint curves 
which intersect $F$ once. We will call these latter curves 
{\em meridians.}    
In \cite{vanhorn} (see Section 4.4),
it was shown how such open books corresponded to torus bundles as well as how these 
open books could be embedded. We discuss some of that procedure here.  

The total space $Y_\infty$ is a torus bundle. We can see the bundle structure directly from the open book. Begin by looking at a meridian $c$ on the torus $S$ in Figure~\ref{fig:books} which is disjoint from those curves used in the presentation of the monodromy. 
Since $c$ is fixed by the monodromy, it traces out a torus which will be a fiber in the
torus bundle $Y_\infty$.  As we move $c$ around the torus 
page it traces out a family of torus fibers. When $c$ crosses the meridional (negative) Dehn twists, this induces a (negative) Dehn twist along the torus fiber, along a curve 
parallel to the page of the open book. Crossing a boundary circle with a positive Dehn 
twist induces a 
negative Dehn twist along the fiber, this time along a direction orthogonal to that of the 
page (see \cite[Section 4.2]{vanhorn}).  These two Dehn twists correspond to the 
standard Dehn twist generators of the mapping class group of the torus and allow one 
to construct all the universally tight, linearly twisting contact structures on torus bundles 
(i.e. those described in Proposition \ref{prop:contact forms}).  

The region above the bracket labeled `$i$ times' shows an 
open book compatible with a region of Giroux torsion one (multiplied $i$ times).  When 
$i=0$, the open book describes the unique Stein fillable contact structure 
(see \cite[Section 4.5]{vanhorn}).  Each of the pieces of the open book swept out as $c$ 
passes a boundary component is a bypass and is compatible with a 
linear contact form of type used in Proposition \ref{prop:contact forms}.  The 
horizontal arcs that make up the curve $F$ in Figure \ref{fig:books} are linear in these 
local models (see \cite[Figure 4.5] {vanhorn}) and correspond to a Legendrian $\{ pt\}
 \times I$ in $T^2 \times I$.  Section 4.5 of \cite{vanhorn} constructs our particular open 
books and shows they are compatible with the given contact structures; the manifold 
$Y_\infty$ is the torus bundle with monodromy $T^{-1}S = \left( \begin{array}{cc} 1 & 1 
\\ -1 & 0 \end{array} \right)$.  (This is different than what is stated in 
\cite[Section 4.7.2]{vanhorn}.  The second author gave there a description 
of the torus bundle with the wrong orientation (0 surgery on the left-handed trefoil).)

We will prove this compatibility later in Proposition \ref{prop:equivalence}.
The first thing we need for Proposition \ref{prop:surgery} is a way to stabilize $F$ on 
the page of a compatible open book.

\begin{figure}
	\labellist	
	\small\hair 2pt
	\pinlabel $\leg$ at 220 80
	\pinlabel $\leg_-$ at 40 155
	\pinlabel $\leg_+$ at 40 25
	\endlabellist
\centering
\includegraphics{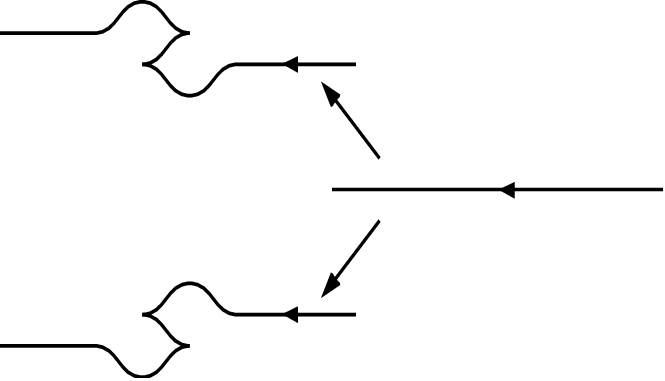}
\caption{ \label{fig:leg stab} Local picture of the front projection for two Legendrian 
stabilizations.}
\end{figure}

\begin{dfn}  Let $\leg$ be a Legendrian knot.  There are two stabilizations of $\leg$, 
positive and negative, denoted by $\leg_+$ and $\leg_-$, resp., given by the front 
projections shown in Figure \ref{fig:leg stab}.
\end{dfn}

Oriented to the right as shown in Figure \ref{fig:leg stab}, $\leg_-$ and $\leg_+$ have 
$${\tt rot}(\leg_- \cup \overline{\leg}) =  -1$$ and
$${\tt rot}(\leg_+ \cup \overline{\leg}) = +1.$$ 

\begin{dfn}  Let $L$ be a knot on a page of an open book.  There are two stabilizations, 
left and right, denoted by $L_l$ and $L_r$, resp., given by the local pictures shown in 
Figure \ref{fig:book stab}.  
\end{dfn}

\begin{figure}[htp]
\labellist
\pinlabel $P$ at 27 39
\pinlabel $\knot_r$ at 88 55
\pinlabel $\knot_l$ at 157 23
\pinlabel $\knot$ at 55 45
\pinlabel $+$ at 73 51
\pinlabel $+$ at 189 29 
\endlabellist
\centering
\includegraphics[width=4.in]{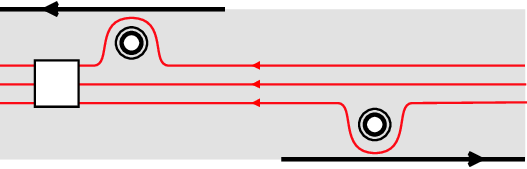}
\caption{ \label{fig:book stab} Local picture of the two stabilizations of a knot on a page 
of an open book.  All knots $\knot$, $\knot_l$ and $\knot_r$ are oriented to the left, so 
that $\knot_l$ is obtained by sliding $\knot$ across a trivial stabilization on its left, and 
similarly $\knot_r$ on its right.  The patch $P$ is used in Lemma \ref{lm:zig zag} to 
prove that open book stabilizations give rise to Legendrian stabilizations.  The embedded 
knots $\knot$, $\knot_l$ and $\knot_r$ are smoothly isotopic.}
\end{figure}

We need the following lemma and will sketch its proof.  A more complete proof can be 
found in \cite{Onaran}.

\begin{lemma}\label{lm:zig zag}  Let $\knot$ be a nonisolating knot on a page of an 
open book.  If $\leg$ is the Legendrian realization of $\knot$, then the Legendrian 
realization of $\knot_r$ is the negative stabilization $\leg_-$ and the Legendrian realization 
of $\knot_l$ is the positive stabilization $\leg_+$.  
\end{lemma}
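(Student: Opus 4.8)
The plan is to work in the local model near the stabilization and show that the open-book stabilization of $\knot$ — which by definition slides $\knot$ across a positive Hopf band attached to the page — produces exactly the zig-zag in the front projection that defines a Legendrian stabilization. First I would recall that a non-isolating curve $\knot$ on a page of an open book for $(Y,\xi)$ can be Legendrian realized (by the Legendrian realization principle of Honda, adapted to pages of open books), and that near such a curve the contact structure looks like the standard $I$-invariant structure on a neighborhood of a Legendrian arc in a page. In this local model the page is a strip, $\knot$ runs along it, and attaching a positive Hopf band on one side corresponds to plumbing in the standard neighborhood of a stabilizing arc.

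The key computational step is to track the front projection through the patch $P$ drawn in Figure \ref{fig:book stab}. I would set up coordinates so that the page sits in the standard contact $\R^3$ with the Legendrian realization $\leg$ of $\knot$ appearing as a straight segment in the front projection. Sliding $\knot$ across a trivial (positive) stabilization on its \emph{right} pushes the knot over an extra fold of the page; writing the corresponding Legendrian arc in the front projection, the fold forces a single down-up cusp pair, which is precisely the local picture of $\leg_-$ in Figure \ref{fig:leg stab}. The sign is pinned down by the rotation-number computation already recorded in the excerpt: ${\tt rot}(\leg_- \cup \overline{\leg}) = -1$ while ${\tt rot}(\leg_+ \cup \overline{\leg}) = +1$. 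Comparing with the orientation conventions of Figure \ref{fig:book stab} (all knots oriented to the right, $\knot_r$ obtained by sliding to the right), the right stabilization contributes $-1$ to the rotation number and the left stabilization contributes $+1$, which forces $\knot_r \leadsto \leg_-$ and $\knot_l \leadsto \leg_+$. Since the two stabilizations are mirror images of each other across the page, it suffices to do the computation for one of them and obtain the other by symmetry.

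The main obstacle I expect is the bookkeeping in the local model: one must be careful that the plumbing of the positive Hopf band really is compatible with the contact structure in the way claimed (so that the Legendrian arc on the new page is isotopic, rel the part outside the patch, to the stabilized Legendrian arc and not to something that differs by a further isotopy or an extra cusp pair), and that the "left/right" convention on the page matches the "positive/negative" convention on the front. This is why I would keep the argument at the level of a sketch and cite \cite{Onaran} for the full verification; the only genuine content beyond the local picture is matching the rotation-number signs, which is forced as above.
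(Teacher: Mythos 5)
Your strategy---put the stabilization region into a local model in standard $\R^3$ and read the new zig-zag off a front projection---is not the route the paper takes, and as written it has a genuine gap: the sign determination is circular. You assert that ``the right stabilization contributes $-1$ to the rotation number and the left stabilization contributes $+1$,'' but that is precisely the conclusion of Lemma \ref{lm:zig zag}, not something that follows from the orientation conventions of Figure \ref{fig:book stab}. The only step you offer in its place---``the fold forces a single down-up cusp pair, which is precisely the local picture of $\leg_-$''---is exactly the point at issue: whether sliding to the right produces an up or a down cusp pair \emph{is} the positive/negative question, and you give no model in which this can actually be checked. The existence of the front model itself is also nontrivial: one needs a contact embedding of a neighborhood of the patch $P$ together with the region carrying the stabilizing positive twist into standard $\R^3$ in which both $\leg$ and $\leg_r$ appear as fronts, and producing such a model is essentially the bookkeeping you defer to \cite{Onaran}; so nothing in your sketch pins down the sign. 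The closing symmetry remark needs care for the same reason: a left/right mirror of the picture is orientation-reversing (and reverses the knot orientation as drawn), so it exchanges the two signs rather than merely transporting a computation from one side to the other.

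For contrast, the paper's sketch contains a self-contained mechanism that does determine the sign, and it uses convex surface theory rather than fronts: realize $\knot$, $\knot_l$, $\knot_r$ simultaneously as Legendrian curves on one page $\Sigma$; since $\knot$ and $\knot_l$ are smoothly isotopic they cobound an annulus $A$, which is made convex with Legendrian boundary starting from the patch $P \subset \Sigma$; because the dividing set of $\Sigma$ is empty, the dividing set of $A$ consists of boundary-parallel arcs adjacent to $\leg_l$ only; a framing (twisting number) comparison shows there is exactly one such arc; and then ${\tt rot}(\knot \cup \overline{\knot}_l) = \chi(A_+) - \chi(A_-) = -1$ identifies $\knot_l$ as the positive stabilization. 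If you want your argument to stand at the same level of rigor as the paper's sketch, either replace the front-projection step by this convex-annulus computation, or actually construct the local contact model of the Hopf-twist region and evaluate ${\tt rot}(\leg_r \cup \overline{\leg})$ in it; citing the rotation-number values of $\leg_{\pm}$ from Figure \ref{fig:leg stab} alone cannot decide which of the two you have produced.
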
  

\begin{proof}[Sketch of proof]  First, observe that we can make $\knot$, $\knot_l$ and 
$\knot_r$ simultaneously Legendrian while sitting on the same page $\Sigma$ of the open 
book, and we let $\leg$, $\leg_l$ and $\leg_r$ refer to these Legendrian knots.  All three 
knots are smoothly isotopic and so $\knot \cup \overline{\knot}_l$ bounds an annulus, 
$A$.  We can make this annulus convex with Legendrian boundary $\leg \cup 
\overline{\leg}_l$, starting with the patch $P$ as shown
 in Figure \ref{fig:book stab}, a subset of the page.  Since the dividing set is empty on $\Sigma$, it is empty on 
$P$ and so the dividing set of $A$ consists of 
boundary parallel arcs adjacent to $\knot_l$. Comparing framings shows that there is only 
one such arc and so ${\tt rot}(\knot \cup \overline{\knot}_l) = \xi(A_+) - \xi(A_-) = -1 = -{\tt rot}(\knot_l \cup \overline{\knot})$.  
This shows that $\knot_l$ is the positive stabilization.
\end{proof}

We will also need the following tool regarding stabilizations of open books.  

\begin{lemma}[The braid relation] \label{lm:braid} Two open books which locally differ as 
in Figure \ref{fig:braid} are related by a positive Hopf stabilization.  There is a contact structure $\xi$ defined in a neighborhood of the local picture compatible with both open books and such that the horizontal arc $K$ is Legendrian and sitting on a page in each.
\end{lemma}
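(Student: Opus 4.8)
The plan is to recognize Figure \ref{fig:braid} as the standard \emph{lantern/braid relation} picture: a single Dehn twist along a curve encircling three marked points (here, three boundary components, or an endpoint configuration) equals a product of twists along curves encircling two of them, together with one extra twist introduced by a Hopf stabilization. Concretely, I would first set up the local model: a disk with three boundary-parallel curves $a$, $b$, $c$ and the curve $d$ surrounding all three; the braid relation on the mapping class group of the 3-holed disk gives $\tau_d = \tau_a \tau_b \tau_c$-type identity in the relevant sub-surface after the stabilization has added one handle. The first step is therefore purely combinatorial-topological: exhibit the explicit ambient surface with boundary appearing in Figure \ref{fig:braid} on each side, identify which curve the positive Hopf stabilization adds, and verify by a direct mapping-class-group computation (the chain relation / braid relation between two Dehn twists whose curves intersect once) that the two monodromies presented by the two local pictures agree after the stabilization. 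This is a routine check in the mapping class group and I would not grind through it; I would cite the standard braid relation $\tau_\alpha\tau_\beta\tau_\alpha = \tau_\beta\tau_\alpha\tau_\beta$ for $|\alpha \cap \beta| = 1$, which is exactly what a single Hopf band insertion realizes at the level of open books (cf. the Giroux correspondence and the stabilization calculus for open books).

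The second step is the contact-geometric statement. Since the Giroux correspondence says that an open book and its positive stabilization support the \emph{same} contact structure, once I have the topological identity I get automatically a contact structure $\xi$, defined on the neighborhood of the local picture, compatible with both open books. Here I would either invoke the relative version of the Giroux correspondence (for open books with boundary / for a neighborhood of a piece of a page) or simply note that both local pictures, assembled into any global open book in the obvious way, yield isotopic contact manifolds, and that a positive Hopf stabilization is supported in a Darboux ball so the contact structure is literally unchanged outside a neighborhood. In either case, the claim ``$\xi$ is defined in a neighborhood of the local picture'' is the local/relative form of this.

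The third step is to track the horizontal arc $K$. The point is that the braid/Hopf stabilization is performed \emph{away from} the arc $K$ — $K$ lies on the part of the page that is common to both local pictures and is untouched by the plumbing of the positive Hopf band. So $K$ sits on a page of both open books, and since in both it is a properly embedded arc on a page (an arc of the form $\{pt\} \times I$ in the appropriate product region, as in the discussion around Figure \ref{fig:books}), it is Legendrian with respect to the compatible contact structure $\xi$: an arc on a page of a compatible open book inherits a canonical Legendrian structure from $\xi$, and the stabilization, being disjoint from $K$, carries this Legendrian arc to a Legendrian arc with the same characteristic foliation near $K$. Hence $K$ is Legendrian and sitting on a page in each, as claimed.

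The main obstacle is the second step: making precise the \emph{relative} Giroux correspondence for a neighborhood of a local picture rather than for a closed manifold. The honest way around it is to observe that every use of this lemma in the paper occurs inside a globally defined open book, so one may always close up the local picture to an honest open book on a closed $3$-manifold, apply the (global) Giroux stabilization theorem there, and then restrict; the arc $K$ and the compatible $\xi$ are then obtained by restriction to the neighborhood. I would state the lemma this way and remark that the local assertion is shorthand for this closing-up argument. The mapping-class-group identity in the first step and the disjointness-from-$K$ observation in the third step are both elementary once the local model is drawn.
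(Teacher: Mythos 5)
Your outline (exhibit the stabilization via a mapping class group identity, get a common compatible contact structure, note the move is disjoint from $K$) has the right shape, but the two substantive steps both have genuine gaps. The first sentence of the lemma \emph{is} the mapping class group computation, and you defer it while pointing at the wrong relation: the braid relation $\tau_\alpha\tau_\beta\tau_\alpha=\tau_\beta\tau_\alpha\tau_\beta$ for once-intersecting curves is an identity inside a fixed mapping class group and does not change the page, so it cannot by itself account for a Hopf stabilization, and your ``$\tau_d=\tau_a\tau_b\tau_c$-type identity'' on the three-holed disk is not a relation in that group. What actually works (and is what the paper does) is to recognize the local piece $B_2$ of Figure \ref{fig:braid} as a four-holed sphere with monodromy $\partial_1\partial_2\sigma_3^{-1}$ and apply the \emph{lantern} relation $\partial_1\partial_2\partial_3\partial_4=\sigma_1\sigma_2\sigma_3$ to rewrite that factorization so that an explicit destabilizing arc appears (Figure \ref{fig:lantb}); destabilizing along it yields $B_1$. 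Without identifying the correct relation and exhibiting the destabilizing arc, the claim that the two open books differ by a positive Hopf stabilization is left unproved.

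The second gap is the contact-geometric statement. Closing up to a global open book and invoking Giroux's stabilization theorem only yields that the two supported contact structures are \emph{isotopic}; it does not produce a single contact structure, defined near the local picture, compatible with both embedded open books, and the isotopy it provides need not respect $K$. That stronger conclusion is exactly what is needed downstream: in the proof of Proposition \ref{prop:surgery} the Legendrian $\leg_{l,r}$ is tracked through repeated applications of this lemma up to ambient contact isotopy, so ``isotopic supported contact structures'' is not enough, and the relative Giroux correspondence you gesture at is not available off the shelf. The paper avoids this by explicit construction: Section 4.2 of \cite{vanhorn} builds a model contact form compatible with the piece $B_1$ in which the horizontal arc is Legendrian, and gluing two such models gives one contact form compatible with both $B_1$ and $B_2$ with $K$ Legendrian on a page of each. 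Your observation that the stabilization is performed away from $K$ is correct and is also used in the paper (the destabilizing arc is disjoint from $K$), but on its own it does not repair this step.
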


 \begin{figure} 
 \begin{center}

 		\mbox{\includegraphics[width=5cm]{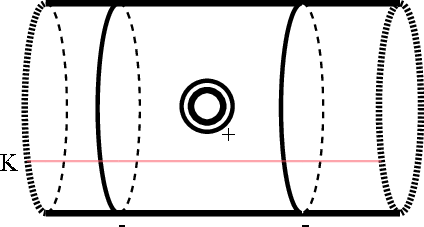}}
 		\hspace{.5in}
 		\mbox{\includegraphics[width=5cm]{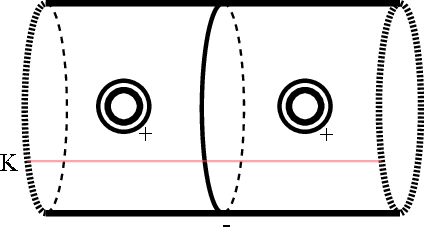}}
		
        \caption{Local pictures $B_1$ and $B_2$ of open books which differ by a Hopf 
        stabilization.}
 	\label{fig:braid}
 \end{center}
 \end{figure}
 
 \begin{figure}[htp]
	\begin{center}
	
	\labellist
	\small\hair 2pt
	\pinlabel $\sigma_1$ at 131 95
	\pinlabel $\sigma_2$ at 38 100
	\pinlabel $\sigma_3$ at 93 7
	\endlabellist
	\includegraphics[width=5cm]{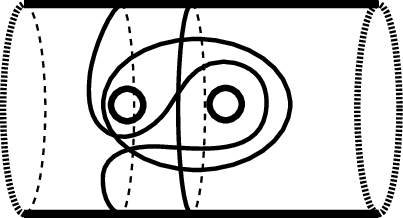}
	\hspace{.5 in}
	\labellist
	\pinlabel $\partial_1$ at 116 42
	\pinlabel $\partial_2$ at 64 42
	\pinlabel $\partial_3$ at 27 5
	\pinlabel $\partial_4$ at 166 5
	\endlabellist
	\includegraphics[width=5cm]{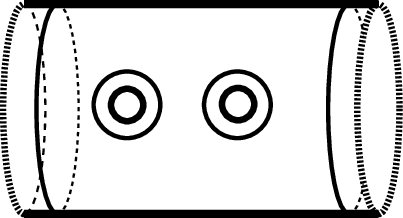}
	\caption{The lantern relation}
	\label{fig:lantern relation}
	\end{center}
 \end{figure}

 \begin{figure} 
 \begin{center}
 		\mbox{\includegraphics[width=5cm]{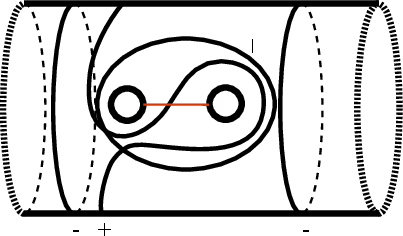}}
		
 	\caption{The monodromy of $B_2$ after applying the lantern relation. 
 The obvious destabilizing arc is shown. }

 	\label{fig:lantb}
 \end{center}
 \end{figure}

\begin{proof} The lantern relation (shown in Figure \ref{fig:lantern relation}) relates the 
product of right-handed Dehn twists along each boundary component to the product of 
those along the three interior curves: $\partial_1 \partial_2 \partial_3 \partial_4 = \sigma_1 
\sigma_2 \sigma_3$, (where the Dehn twists act left to right).  This diagram is different
than the usual presentation of the lantern relation which draws the surface as a 
three-holed disk with the curves $\sigma_i$ placed symmetrically, cf. 
\cite{dehn38, johns79}, but is more convenient for our purposes here.

The segment $B_2$ of the open book shown on 
the right hand side of Figure \ref{fig:braid} is a 4-holed sphere with monodromy
$\partial_1 \partial_2 \sigma_3^{-1}$ using the same curves as in Figure \ref{fig:lantern 
relation}.  After applying the lantern relation to $B_2$ we get the presentation 
$\sigma_1 \sigma_2 \partial_3^{-1} \partial_4^{-1}$.
In applying the lantern relation here it is important that all $\partial_i$ commute with 
each other and with all $\sigma_i$.
The new presentation is shown in Figure 
\ref{fig:lantb} with an obvious destabilizing arc.  After destabilizing, we are left with the 
open book segment $B_1$.  Notice that the destabilizing arc is disjoint from the  
horizontal arc $K$ shown in Figure \ref{fig:braid} and so we can apply the braid relation 
even when there are Dehn twists along curves running parallel to the segment, so long as 
the Dehn twists along $\partial_1$ and $\partial_2$ occur simultaneously in the described 
factorization.  It was shown in \cite[Section 4.2]{vanhorn} how to construct a contact 
form compatible with $B_1$.  Gluing two of these together gives a contact form compatible 
with both $B_2$ and $B_1$ and with the horizontal arc $K$ being Legendrian and sitting 
on pages of each.
\end{proof}

\begin{prop}\label{prop:equivalence}  The open books described in Figure \ref{fig:books} 
are compatible with the contact structures $\xi_i$ on $Y_\infty$ with the Legendrian knot 
$F$ from Proposition \ref{prop:contact forms} sitting on the page as the knot $F$ in the 
figure.
\end{prop}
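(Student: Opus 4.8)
The plan is to build a contact form compatible with the open book of Figure \ref{fig:books} by assembling it out of the standard local models of \cite{vanhorn}, using the braid relation (Lemma \ref{lm:braid}) as the gluing device, and then to recognize the resulting contact structure as $\xi_i$ by computing both the monodromy of the underlying torus bundle and the total twisting of the contact planes. The guiding observation is that each piece of the open book swept out as the meridional curve $c$ of Figure \ref{fig:books} crosses a boundary component or a negative meridional twist is a local model that already carries one of the linear contact forms of Proposition \ref{prop:contact forms}.

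First I would recall from \cite[Sections 4.2 and 4.5]{vanhorn} that each elementary segment of the open book in Figure \ref{fig:books} --- the bypass region swept out as $c$ crosses a single boundary component, together with the horizontal arc of $F$ lying in it --- is a planar open book segment carrying a contact form $\alpha = \sin(\varphi(t))\,dx + \cos(\varphi(t))\,dy$ on a slab $T^2 \times I$ of exactly the type used in Proposition \ref{prop:contact forms}, with the horizontal arc realized as a Legendrian $\{pt\} \times I$. Crossing a negative Dehn twist along a curve parallel to $c$ induces a page-parallel Dehn twist on the torus fiber, while crossing a positive boundary-parallel Dehn twist induces the orthogonal Dehn twist; these are the two standard generators $T$ and $S$ of the mapping class group of $T^2$, and under the correspondence they amount to letting the angle $\varphi$ advance past its next critical value in the respective coordinate direction.

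Next I would glue these pieces together. Since the horizontal arc $K$ in Lemma \ref{lm:braid} is disjoint from the destabilizing arc, the braid relation can be applied repeatedly to reassemble the elementary segments of Figure \ref{fig:books} into the full open book while keeping all arcs of $F$ simultaneously Legendrian and on a common page; gluing the local contact forms then produces a global contact form $\alpha = \sin(\varphi(t))\,dx + \cos(\varphi(t))\,dy$ with $\varphi$ monotone increasing and the contact structure invariant under the action $(\mathbf{v},t) \mapsto (A\mathbf{v}, t-1)$, with $F$ sitting on a page exactly as drawn. The Hopf (de)stabilizations hidden inside the braid relation do not change the supported contact structure, so compatibility is preserved at each step.

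Finally I would identify the output. Tracking $c$ once around the page of Figure \ref{fig:books} and recording the induced mapping classes in the order in which they occur --- an $S$-type twist for each boundary-parallel crossing and a $T^{-1}$-type twist for each negative meridional twist --- shows that the monodromy of the torus bundle is conjugate to $T^{-1}S = \begin{pmatrix} 1 & 1 \\ -1 & 0 \end{pmatrix}$, i.e.\ to the presentation of $Y_\infty$ recorded in Section \ref{sec:construction}; here I would be especially careful with signs, since this is exactly the point corrected by the erratum to \cite[Section 4.7.2]{vanhorn}. Counting the increments of $\varphi$ contributed by the twists --- a full $2\pi$ for each of the $i$ copies of the Giroux-torsion-one region, plus the residual amount for the Stein-fillable remainder --- then shows that $\varphi$ satisfies condition $(2)$ of Proposition \ref{prop:contact forms} for the index corresponding to $i$; since that proposition asserts that $\xi_i$ is determined up to isotopy by this data, the contact structure supported by the open book is $\xi_i$. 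The main obstacle I expect is precisely this bookkeeping in the last step: verifying that the sequence of page-parallel and orthogonal Dehn twists read off from Figure \ref{fig:books} multiplies out to $T^{-1}S$ with the correct sign, and that the total angular rotation of the contact planes lands in the half-open interval of condition $(2)$ rather than being off by a single multiple of $\pi$ --- making these two integer counts come out right, consistently across all of the gluings, is where the real work lies.
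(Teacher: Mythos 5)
Your proposal is correct in outline, but at the key identification step it takes a genuinely different route from the paper, one much closer to the constructive argument of \cite{vanhorn} that the paper deliberately avoids. You propose to glue the linear local models into a global form $\sin(\varphi(t))\,dx+\cos(\varphi(t))\,dy$ compatible with the open book and then identify the index by reading off the torus-bundle monodromy and the total rotation of $\varphi$, invoking Proposition \ref{prop:contact forms}. The paper instead argues softly: by \cite[Lemma 4.4.4]{vanhorn} the supported structures are weakly fillable, hence by Honda's classification \cite{honda:2} they lie in Giroux's family $\{\xi_k\}$; the region repeated $i$ times is a Giroux torsion one region and the $i=0$ open book supports the torsion zero structure (realizable by Legendrian surgery on the Stein fillable structure on $T^3$), which pins the index to $i$ with no angle or sign bookkeeping beyond the base case. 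Your route is more self-contained (it does not lean on the classification of tight structures on torus bundles), but it carries precisely the bookkeeping burden you flag at the end, and that bookkeeping is genuinely treacherous: the paper explicitly corrects a handedness error of exactly this type in \cite[Section 4.7.2]{vanhorn}, whereas its fillability-plus-torsion argument is immune to such slips. One adjustment to your plan: the braid relation of Lemma \ref{lm:braid} is not the gluing device here --- Figure \ref{fig:books} is already a single open book, and the local contact forms are glued directly along matching boundary data as in \cite[Sections 4.2 and 4.5]{vanhorn}; this direct gluing of basic slices (keeping the horizontal arc Legendrian) is also exactly how the paper handles the statement that $F$ sits on a page, while the braid relation only enters later, in the proof of Proposition \ref{prop:surgery}. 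With that correction, and with the monodromy and rotation counts actually carried out, your approach goes through.
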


\begin{proof}  This can and is proved without resorting to many of the intricacies 
discussed in the beginning of the section.  We first note that the region labeled $i$ 
times is a region of Giroux torsion 1, multiplied $i$ times, and for convenience, let 
us denote the associated open book $\mathfrak{B}_i$.  From 
\cite[Lemma 4.4.4]{vanhorn} and its corollary, we see that the compatible contact 
structures  are weakly 
fillable for all $i$ and hence by the classification in \cite{giroux, honda:2} must be in the 
Giroux's family of tight contact structures constructed in Section \ref{sec:construction}.  
From \cite[Section 4.5]{vanhorn} --- recalling that 
the monodromy for the right-handed trefoil is $T^{-1}S = \left( \begin{array}{cc} 1 
& 1 \\ -1 & 0 \end{array} \right)$ --- we see that, when $i=0$, the compatible contact 
structure has zero Giroux torsion. (Indeed, it can be realized by Legendrian surgery 
on the Stein fillable contact structure on $T^3$).  Thus $\mathfrak{B}_i$ is compatible 
with $\xi_i$.  To see that the curve $F$ in the diagram really is the Legendrian $F$ 
discussed after Proposition \ref{prop:contact forms}, we do need a bit of detail.  
Looking at \cite[Figure 4.5]{vanhorn}, the embedded diagram of a basic slice is 
compatible with a linearly twisting contact form of the type discussed in Proposition 
\ref{prop:contact forms}, and further the arc tangent to the $t$-axis at the left and 
right sides of the picture is Legendrian.  One can glue any number of basic slices 
together (as well as gluing the front and back boundaries together) and the resulting 
open book will still be compatible with a linear contact form and the matched horizontal 
arc will remain Legendrian. 
\end{proof}

\begin{proof}[Proof of Proposition \ref{prop:surgery}]
From Proposition \ref{prop:equivalence} and Lemma \ref{lm:zig zag} we see that 
$\leg_{l,r}$ is $F_{i,j}$ from Section \ref{sec:construction}, (where $j=l-r$ and 
$n = l+r+i+2$).  Thus adding a right handed Dehn twist to the open book along $\leg_{l,r}$ 
gives an open book compatible with $(Y_n, \eta_{i,j}^n)$ and this describes the surgery 
from $Y_\infty$ to $Y_n$.  

To find the second link $\mathcal{C}$ and prove the lemma, we add positive twists to the 
monodromy of $(Y_\infty, \xi_{i+1})$ in a small standard region of a compatible open book 
and see that, after applying the braid relation of Lemma \ref{lm:braid}, we have an open 
book compatible with $(Y_\infty, \xi_{i})$.  Notice now that each open book for $(Y_\infty, 
\xi_{i+1})$, $i\geq0$, has a region:
\begin{center} \includegraphics[height=1.2in]{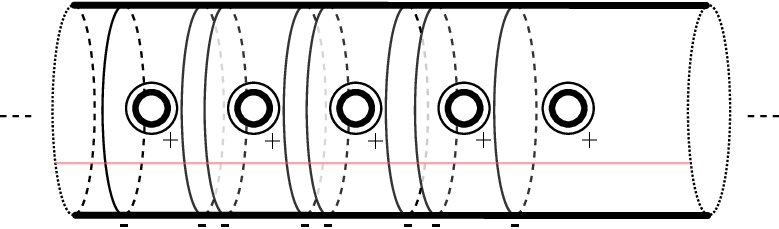}\end{center}
 which describes a region of Giroux torsion one plus a basic slice.  
To this, we add positive twists to the monodromy to get to the following open book. 
\begin{center} \includegraphics[height=1.2in]{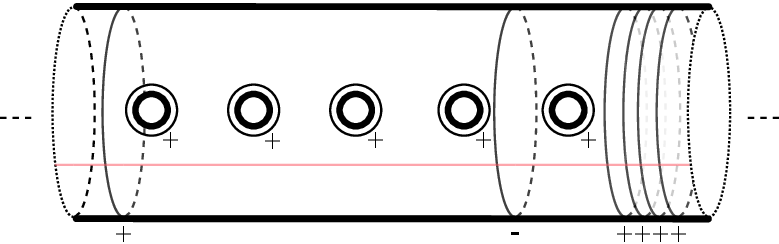}\end{center} 
These positive twists make up the Legendrian link $\mathcal{C}$.  Now repeatedly apply 
the braid relation and reduce to the open book for $(Y_\infty, \xi_i)$ where the Giroux 
torsion has been excised. Locally we now have the following picture.
\begin{center} \includegraphics[height=1.2in]{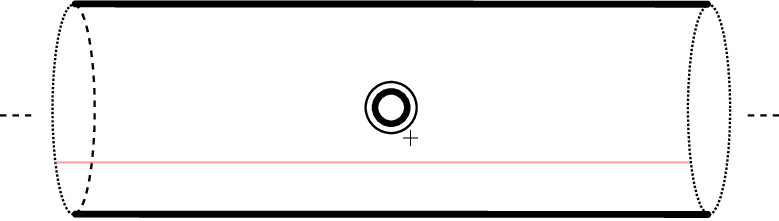} \end{center} 
Since the braid relation can be applied in the presence of Dehn twists along a horizontal 
curve (i.e. the red curve in the pictures above, see Figure \ref{fig:braid}), attaching 
Stein handles along 
$\leg_{l,r}\cup \mathcal{C}$ in $(Y_\infty, \xi_{i+1})$ gives an open book which is related 
to that shown in Figure \ref{fig:books} for $(Y_n, \eta_{i,j}^n)$ by Hopf stabilization.  
We showed in Lemma \ref{lm:braid} that the braid relation does not change how 
horizontal knots on the page are embedded up to an ambient contact isotopy, and thus 
after surgery and applying braid relation the image of the Legendrian knot $\leg_{l,r}$ 
from $\xi_{i+1}$ sits on a page of the open book as $\leg_{l,r}$ in $\xi_{i}$.
\end{proof}
\subsection{The computation of the invariants}
\begin{lemma} \label{homotopy1}
All tight contact structures $\xi_i$ on $Y_{\infty}$ are homotopic and have 
$3$-dimensional homotopy invariant $\theta(\xi_i)= - 4$
\end{lemma}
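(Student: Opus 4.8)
The plan is to reduce the Lemma to two claims: (i) every $\xi_i$ is homotopic, as an oriented $2$-plane field, to $\xi_0$; and (ii) $\theta(\xi_0)=-4$. Since homotopic $2$-plane fields have equal Gompf invariant, (i) and (ii) give the Lemma at once. Note first that $c_1(\xi_i)=0$ for all $i$, as observed in Section~\ref{sec:construction}; in particular $c_1$ is torsion, so $\theta(\xi_i)$ is defined, and, since $H^2(Y_\infty;\Z)\cong\Z$ has no $2$-torsion, all the $\xi_i$ induce the same $\spc$-structure.

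For (i) I would use that $\xi_{i+1}$ is isotopic to the contact structure obtained from $\xi_i$ by inserting one unit of Giroux torsion along a pre-Lagrangian $T^2$-fibre (this is how Giroux's family is indexed, and it is visible from the defining functions $\varphi_i$), together with the fact --- which I would prove by an explicit homotopy --- that such an insertion does not change the homotopy class of the underlying $2$-plane field. The insertion replaces a collar $T^2\times(-\epsilon,\epsilon)$ of the fibre, on which $\xi_i=\ker(\sin\varphi(t)\,dx+\cos\varphi(t)\,dy)$, by a longer region $T^2\times[0,1]$ carrying $\ker(\sin\psi(t)\,dx+\cos\psi(t)\,dy)$ with $\psi(0)\equiv\varphi(-\epsilon)$ and $\psi(1)\equiv\varphi(\epsilon)\pmod{2\pi}$ --- the congruences being exactly what is needed for the cooriented characteristic foliations to match across the gluing --- and leaves everything else in place. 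With respect to the flat metric the unit normal map of a plane field of the form $\ker(\sin f(t)\,dx+\cos f(t)\,dy)$ sends $(v,t)$ to $(\sin f(t),\cos f(t),0)$, independently of $v\in T^2$; applied with $f=\varphi$ on the old collar and with $f=\psi$ on the inserted region, this exhibits both unit normal maps as factoring through the respective $t$-intervals by paths in $S^2$, and these two paths have the same endpoints. Since $\pi_1(S^2)=0$, they are homotopic rel endpoints; composing such a homotopy with the projection, and keeping the complementary region fixed, yields a homotopy of $2$-plane fields on $Y_\infty$ from $\xi_i$ to the torsion-inserted structure. Combined with the isotopy to $\xi_{i+1}$ and induction on $i$, this gives $\xi_i\simeq\xi_0$ for every $i$.

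For (ii) I would compute $\theta(\xi_0)$ from a Stein filling. By the proof of Proposition~\ref{prop:equivalence} (see also \cite{vanhorn}) $\xi_0$ is Stein fillable; since $Y_\infty$ is $0$-surgery on the right-handed trefoil, a Stein filling is the handlebody $W$ obtained from $B^4$ by attaching a single Stein $2$-handle along a Legendrian right-handed trefoil $K\subset(S^3,\xi_{\mathrm{std}})$ of maximal Thurston--Bennequin invariant $tb(K)=1$, the attaching framing being $tb(K)-1=0$ (the contact structure so produced is Stein fillable, hence of zero Giroux torsion, hence is $\xi_0$). Then $\chi(W)=2$, the intersection form of $W$ is $(0)$ so $\sigma(W)=0$, and $c_1(W,J)$ evaluates on a generator of $H_2(W)$ as ${\tt rot}(K)=0$ (using $tb(K)+|{\tt rot}(K)|\le 2g(K)-1=1$); as $H^2(W;\Z)$ is torsion free this forces $c_1(W,J)=0$. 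Gompf's formula \cite[\S4]{gompf:1} then gives
\[
\theta(\xi_0)=c_1(W,J)^2-2\chi(W)-3\sigma(W)=0-4-0=-4 .
\]

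The hard part is claim (i), and within it the bookkeeping: checking that $\xi_i$ and the torsion-inserted structure really do agree on the common region outside the inserted piece, so that the homotopy constructed there extends over all of $Y_\infty$, and that the coorientations and the relevant multiple of $2\pi$ are tracked correctly so that the two $S^2$-valued paths genuinely have matching endpoints. The ingredients for (ii) --- the Euler characteristic, signature, and first Chern class of the Stein handlebody, together with the normalization of Gompf's formula --- are routine.
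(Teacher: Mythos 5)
Your proposal is correct, and it splits into two halves of different character relative to the paper. For the value $\theta(\xi_0)=-4$ you follow exactly the paper's route: take the Stein filling $(V_\infty,J_\infty)$ given by a single Stein $2$-handle on a maximal-$tb$ Legendrian right-handed trefoil and apply Gompf's formula from \cite[Definition 4.2]{gompf:1}; your version is in fact slightly more careful, since you justify $c_1(J_\infty)=0$ via the Bennequin inequality forcing ${\tt rot}(K)=0$ when $tb(K)=1$ (note the paper's phrase ``Thurston--Bennequin invariant $0$'' is best read as referring to the attaching framing $tb-1=0$, consistent with the $tb=+1$ representative used later in Lemma \ref{pluto}). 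For the homotopy statement, however, the paper simply cites \cite[Proposition 2]{giroux:3}, whereas you give a direct argument: $\xi_{i+1}$ differs from $\xi_i$ by a Giroux torsion insertion along a pre-Lagrangian fibre, and on the inserted slab the Gauss maps of both plane fields factor through the $t$-interval as paths into $S^2$ with common endpoints, hence are homotopic rel endpoints because $\pi_1(S^2)=0$. This is a standard and correct argument (it is the usual proof that a Lutz-type twist along a torus does not change the homotopy class of the plane field), and it buys self-containedness at the cost of the bookkeeping you acknowledge --- chiefly the identification of the stretched collar with the original one so that the two plane fields live on the same manifold and agree outside it, which is legitimately routine. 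Your argument does still lean on Proposition \ref{prop:contact forms} (independence of the isotopy class from the choice of $\varphi_i$) to realize $\xi_{i+1}$ as the torsion-inserted $\xi_i$, so Giroux's work is not entirely avoided, but the homotopy-theoretic content is supplied rather than quoted.
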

\begin{proof}
The homotopy between all the contact structures $\xi_i$ was proved in 
\cite[Proposition 2]{giroux:3}. Therefore we can 
compute the $3$-dimensional homotopy invariant of $\xi_0$, which has a  Stein filling 
$(V_{\infty}, J_{\infty})$ obtained by attaching a Stein handle on a Legendrian right-handed 
trefoil knot with Thurston--Bennequin invariant $0$
(see Figure \ref{topsurg.fig}). It is easy to see that $c_1(J_{\infty})=0$, $\chi(V_{\infty})=2$, 
and $\sigma(V_{\infty})=0$. The formula for the $3$-dimensional homotopy 
invariant in \cite[Definition 4.2]{gompf:1} is
$$ \theta(\xi_0)= c_1(J_\infty)^2 - 2 \chi(V_\infty) -3 \sigma(V_\infty),$$
so $\theta (\xi_0)=-4$.
\end{proof}
\begin{lemma}[{\cite[Theorem 3.12]{ghiggini:3}}] \label{homotopy2}
All tight contact structures $\eta_{i,j}^n$ are homotopic and have 
$3$-dimensional homotopy invariant $\theta (\xi_i)= - 6$.
\end{lemma}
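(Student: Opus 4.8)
The plan is to reduce the assertion to a computation of Gompf's $3$-dimensional homotopy invariant $\theta$ and then to evaluate $\theta$ on explicit fillings. Recall that for an oriented plane field $\xi$ on a closed oriented $3$-manifold $Y$ with $c_1(\xi)$ torsion, $\theta(\xi)$ may be computed from \emph{any} compact almost complex $(X,J)$ with $\partial X=Y$ and $\xi$ the field of complex tangencies, by $\theta(\xi)=c_1(X,J)^2-2\chi(X)-3\sigma(X)$ --- this is exactly the recipe already applied to $\xi_0$ in Lemma \ref{homotopy1}. Since each $Y_n$ is an integer homology sphere it has a unique $\spc$-structure, so the $2$-dimensional part of Gompf's plane field invariant is trivial, and by Gompf's classification \cite{gompf:1} two oriented plane fields on $Y_n$ are homotopic if and only if they have the same $\theta$. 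It therefore suffices to show that $\theta(\eta^n_{i,j})=-6$ for every $(i,j)\in{\mathcal P}_n$.

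I would first dispose of the bottom row $i=0$ by a direct computation. By construction $\eta^n_{0,j}$ is obtained by Legendrian surgery on a Legendrian realization of the link of Figure \ref{topsurg.fig} in which the right-handed trefoil component has maximal Thurston--Bennequin invariant $1$ (hence rotation number $0$) and the meridional unknot has rotation number $j$. This displays a Stein filling $X^n_{0,j}$, a $4$-ball with two Stein $2$-handles attached, with $\chi(X^n_{0,j})=3$; its intersection form has diagonal entries $0$ and $-n$ and off-diagonal entry the linking number, which is $\pm1$ because $Y_n$ is a homology sphere, so $\sigma(X^n_{0,j})=0$. By Gompf's formula for Stein handlebodies $c_1(X^n_{0,j})$ pairs with the two handle generators to give $0$ and $j$; inverting the intersection form, a one-line computation gives $c_1(X^n_{0,j})^2=0$. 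Hence $\theta(\eta^n_{0,j})=0-6-0=-6$, independently of $j$.

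For $i>0$ I would propagate this along the surgery construction of Section \ref{construction}. Iterating Proposition \ref{prop:surgery} identifies $(Y_n,\eta^n_{i,j})$ with the result of Legendrian surgery along successive copies of the link ${\mathcal C}$ starting from $(Y_{n-i},\eta^{n-i}_{0,j})$ --- note that $(i,j)\in{\mathcal P}_n$ forces $(0,j)\in{\mathcal P}_{n-i}$, so this is consistent. Attaching the corresponding Stein $2$-handles to the filling $X^{n-i}_{0,j}$ yields a Stein filling of $(Y_n,\eta^n_{i,j})$, and the change in $\theta$ is $\Delta(c_1^2)-2m-3\,\Delta\sigma$, where $m$ is the total number of handles and, writing $Q$ for the linking matrix of the surgery link and $\vec\rho$ for the vector of rotation numbers of its components, $\Delta(c_1^2)=\vec\rho^{\,\top}Q^{-1}\vec\rho$ and $\Delta\sigma=\sigma(Q)$. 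The heart of the argument is to check, from the open book picture of ${\mathcal C}$ in Figure \ref{fig:books} (equivalently, from the proof of Proposition \ref{prop:surgery}), that $\vec\rho^{\,\top}Q^{-1}\vec\rho-2m-3\,\sigma(Q)=0$: excising a unit of Giroux torsion by these Legendrian surgeries must leave $\theta$ unchanged, and one expects each component of ${\mathcal C}$ to carry rotation number $\pm1$ and contribute a negative definite block, for which the identity is immediate. Granting this, $\theta(\eta^n_{i,j})=\theta(\eta^{n-i}_{0,j})=-6$ for all $(i,j)$, and the homotopy statement follows.

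The step I expect to be the main obstacle is precisely that last verification: turning the schematic description of ${\mathcal C}$ into an honest framed link in $Y_{n-i}$, computing $Q$ and the rotation numbers, and confirming the numerical identity --- including the edge case $i=1\leadsto i=0$, which lies slightly outside the range $i>0$ of Proposition \ref{prop:surgery} and may have to be treated separately, for instance from a weak Giroux filling of $\xi_1$. An alternative that avoids ${\mathcal C}$ altogether would use a single smooth $4$-manifold $W_\infty$ weakly filling $(Y_\infty,\xi_i)$ for all $i$, as in Giroux's construction on torus bundles: since $\theta(\xi_i)=-4$ for every $i$ by Lemma \ref{homotopy1}, Gompf's formula forces $c_1(W_\infty,J_i)^2$ to be independent of $i$, and one then attaches the Stein $2$-handle along $F_{i,j}$; but this trades the combinatorics of ${\mathcal C}$ for a study of $H_*(W_\infty)$ and of the homology class of the new handle, of comparable difficulty. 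In either form the statement is \cite[Theorem 3.12]{ghiggini:3}.
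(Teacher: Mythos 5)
Your reduction of the statement to a computation of $\theta$ is fine: since $Y_n$ is an integer homology sphere it carries a unique $\spc$-structure, so Gompf's classification shows plane fields are classified by $\theta$, and your bottom-row computation is correct ($\chi=3$, $\sigma=0$, and $\vec\rho^{\,\top}Q^{-1}\vec\rho=0$ because the trefoil component has rotation number zero and the corresponding entry of $Q^{-1}$ vanishes, giving $\theta(\eta^n_{0,j})=-6$). Note that the paper does not reprove this lemma at all: it cites \cite[Theorem 3.12]{ghiggini:3} and observes that the proof there extends to all $(i,j)$; that argument computes $\theta$ from a bounding almost complex manifold obtained by attaching the single handle along $F_{i,j}$ to a filling of $(Y_\infty,\xi_i)$, i.e.\ essentially the ``alternative'' you sketch at the end but do not carry out.

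The propagation step for $i>0$, however, has a genuine gap: you read Proposition \ref{prop:surgery} in the wrong direction. Legendrian surgery along $\mathcal{C}$ \emph{excises} Giroux torsion: it is performed in $(Y_\infty,\xi_{i+1})$, producing $(Y_\infty,\xi_i)$, and correspondingly it takes $(Y_{n+1},\eta^{n+1}_{i+1,j})$ to $(Y_n,\eta^n_{i,j})$ --- this is exactly why, in the proof of Theorem \ref{main}, the cobordism map satisfies $F_{W_n}(c(\eta^n_{i,j}))=c(\eta^{n+1}_{i+1,j})$. Hence $(Y_n,\eta^n_{i,j})$ is \emph{not} obtained from $(Y_{n-i},\eta^{n-i}_{0,j})$ by surgeries along copies of $\mathcal{C}$; rather $(Y_{n-i},\eta^{n-i}_{0,j})$ is obtained from $(Y_n,\eta^n_{i,j})$. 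In particular there is no Stein filling of $\eta^n_{i,j}$ of the form $X^{n-i}_{0,j}\cup(\text{handles along }\mathcal{C})$, and there cannot be one in general: $\eta^n_{n-2,0}$ is not Stein fillable for $n\ge 3$ by \cite{ghiggini:nonstein}, and the paper conjectures that no $\eta^n_{i,j}$ with $i>0$ is Stein fillable. So the obstacle is not the numerical identity $\vec\rho^{\,\top}Q^{-1}\vec\rho-2m-3\sigma(Q)=0$ that you flag, but the nonexistence of the claimed fillings. To repair the induction you must either run it in the correct direction --- start from \emph{any} compact almost complex $4$-manifold bounding $(Y_n,\eta^n_{i,j})$ (such a manifold always exists, though it need not be Stein), attach the Weinstein handles along $\mathcal{C}$ to obtain one bounding $(Y_{n-1},\eta^{n-1}_{i-1,j})$, and show the increment of $c_1^2-2\chi-3\sigma$ vanishes --- or follow \cite{ghiggini:3}: attach the single Stein handle along $F_{i,j}$ to a weak filling of $(Y_\infty,\xi_i)$ and use that $\theta(\xi_i)=-4$ independently of $i$ (Lemma \ref{homotopy1}).
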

The reference \cite{ghiggini:3} computes $\frac{\theta}{4}$ for $i=0$ or $i=n$, but the 
proof can be extended to all cases without modification.

\begin{lemma} \label{hf1}
$\underline{\hf}(-Y_{\infty}) \cong \Z[H^1( -Y_{\infty})] \oplus \Z[H^1( -Y_{\infty})]$ with one 
summand in degree $\frac 12$ and one in degree $\frac 32$. Moreover 
$c(\xi_0)$ is a generator of the summand in degree $\frac 12$.
\end{lemma}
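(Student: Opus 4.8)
The plan is to realize $-Y_\infty$ as surgery on the trefoil, feed this into the twisted surgery exact triangle (Theorem \ref{twisted-surgery}), and then identify the contact class using the naturality of the twisted contact invariant under Legendrian surgery (Theorem \ref{natural}) applied to the Stein filling $(V_\infty,J_\infty)$.

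Since $Y_\infty$ is $0$-surgery on the right-handed trefoil, $-Y_\infty$ is $0$-surgery on the left-handed trefoil $K$. Applying Theorem \ref{twisted-surgery} with $Y=S^3$ gives an exact triangle
\[ \hf(S^3)[t,t^{-1}] \xrightarrow{\ F\ } \hf(S^3_{-1}(K))[t,t^{-1}] \longrightarrow \underline{\hf}(-Y_\infty) \longrightarrow \hf(S^3)[t,t^{-1}], \]
where $F=\sum_k F_{W,\mathfrak s_k}\otimes t^k$ and $W$ is the $(-1)$-framed $2$-handle cobordism from $S^3$ to $S^3_{-1}(K)$. Now $S^3_{-1}(K)$ is the Poincar\'e homology sphere $\Sigma(2,3,5)$, so that $\hf(S^3_{-1}(K))\cong\Z$ is supported in degree $d(\Sigma(2,3,5))=2$, while $\hf(S^3)\cong\Z$ sits in degree $0$. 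The cobordism $W$ is negative definite with intersection form $(-1)$, so each $F_{W,\mathfrak s_k}$ shifts degree by $\tfrac14\bigl(c_1(\mathfrak s_k)^2+1\bigr)=\tfrac14\bigl(1-(2k+1)^2\bigr)\le 0$; since the target is concentrated in the strictly higher degree $2$, every $F_{W,\mathfrak s_k}$, hence $F$, vanishes. The triangle therefore degenerates to a short exact sequence
\[ 0 \longrightarrow \hf(\Sigma(2,3,5))[t,t^{-1}] \longrightarrow \underline{\hf}(-Y_\infty) \longrightarrow \hf(S^3)[t,t^{-1}] \longrightarrow 0, \]
which splits because its quotient is free; hence $\underline{\hf}(-Y_\infty)\cong\Z[t,t^{-1}]^2=\Z[H^1(-Y_\infty)]^2$. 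Its two free generators lie in degrees $\tfrac12$ and $\tfrac32$: this follows either by comparing with the untwisted group $\hf(-Y_\infty)\cong\Z_{(1/2)}\oplus\Z_{(3/2)}$, which is recovered from the $\Z[H^1(-Y_\infty)]$-free module $\underline{\hf}(-Y_\infty)$ by reduction of coefficients, or by tracking the grading shifts of the two cobordism maps appearing in the triangle.

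For the final assertion, by Lemma \ref{homotopy1} and the degree formula for contact invariants $\underline c(\xi_0)$ is homogeneous of degree $-\tfrac{\theta(\xi_0)}{4}-\tfrac12=\tfrac12$, so it lies in the degree-$\tfrac12$ summand, a free rank-one $\Z[H^1(-Y_\infty)]$-module; write $\underline c(\xi_0)=p\cdot g$ for a generator $g$, with $p\in\Z[H^1(-Y_\infty)]$. The contact manifold $(Y_\infty,\xi_0)$ is obtained from $(S^3,\xi_{\mathrm{std}})$ by Legendrian surgery, with Stein filling $(V_\infty,J_\infty)$; its intersection form is the zero form of rank one (recall $\sigma(V_\infty)=0$ from the proof of Lemma \ref{homotopy1}), so $K(\overline{V_\infty})=H^2(\overline{V_\infty},\partial\overline{V_\infty})$ is infinite cyclic and the transported coefficient module $M(\overline{V_\infty})$ is again free of rank one over $\Z[H^1(-Y_\infty)]$ (with its conjugated action). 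By Theorem \ref{natural} the induced anti-$\Z[H^1(-Y_\infty)]$-linear map $F_{\overline{V_\infty}}$ carries $\underline c(\xi_0)$ to the contact class of $(S^3,\xi_{\mathrm{std}})$, which is a unit of $M(\overline{V_\infty})\cong\Z[t,t^{-1}]$. Setting $q=F_{\overline{V_\infty}}(g)$ we obtain $\overline p\cdot q=(\text{unit})$, so $\overline p$, and hence $p$, is a unit of $\Z[t,t^{-1}]$; thus $\underline c(\xi_0)$ generates the degree-$\tfrac12$ summand.

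The main subtlety is to get the orientations exactly right so that the Poincar\'e sphere $\Sigma(2,3,5)$, with its \emph{positive} $d$-invariant, rather than $-\Sigma(2,3,5)$, appears as $S^3_{-1}(K)$ in the triangle — this is precisely what produces the grading inequality that kills $F$ — and, for the contact-class statement, to exploit that the Stein filling $V_\infty$ carries the \emph{zero} intersection form, which keeps the transported twisted coefficients free of rank one over $\Z[H^1(-Y_\infty)]$ and thereby lets one pull the unit contact class of $(S^3,\xi_{\mathrm{std}})$ back to $-Y_\infty$.
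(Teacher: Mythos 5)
Your computation of the module structure follows essentially the paper's own route: the same twisted surgery triangle for the left-handed trefoil, the same vanishing of $F$ via the nonpositive degree shift $\tfrac14\bigl(1-(2k+1)^2\bigr)=-k(k+1)$ against the degree-$2$ target $\hf(\Sigma(2,3,5))$, and the resulting free rank-two module. Where you genuinely diverge is the last claim: the paper deduces that $\underline{c}(\xi_0)$ generates $\underline{\hf}_{1/2}(-Y_\infty)$ from Ozsv\'ath--Szab\'o's nonvanishing theorem for weakly fillable contact structures with twisted coefficients (\cite[Theorem 4.2]{O-Sz:genus}), whereas you re-derive it by pushing $\underline{c}(\xi_0)$ through the Stein cobordism $V_\infty$ to $(S^3,\xi_{\mathrm{std}})$ via Theorem \ref{natural}, using anti-$\Z[t^{\pm1}]$-linearity to force the coefficient $p$ to be a unit. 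That argument is sound, anticipates the paper's own use of $F_{V_\infty}$ in Lemma \ref{pluto}, and has the virtue of staying inside the toolkit already set up in the paper rather than importing an external theorem.

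Two steps, however, are missing or under-justified. First, nothing in your argument shows that $\underline{\hf}(-Y_\infty)$ is supported in the torsion $\spc$-structure: the exact triangle only computes the direct sum over all $\spc$-structures, and without concentration in the torsion one the assignment of absolute $\Q$-degrees $\tfrac12$ and $\tfrac32$ --- as well as your identification of the degree-$\tfrac12$ homogeneous part with a free rank-one summand, which your generator argument relies on --- does not even make sense. The paper settles this by applying the adjunction inequality to the torus generating $H_2(Y_\infty)$. Second, the degree placement itself is only gestured at: your first alternative (reading it off from an untwisted isomorphism $\hf(-Y_\infty)\cong\Z_{(1/2)}\oplus\Z_{(3/2)}$) assumes a computation you have not supplied, and your second alternative is exactly the paper's argument but you do not carry it out --- one needs that both remaining maps in the triangle shift degree by $-\tfrac12$, which the paper obtains because one is a $0$-framed two-handle attachment increasing $b_1$ and the other is a two-handle attachment along a homologically essential knot (\cite[Lemma 3.1]{O-Sz:4}). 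With these two points filled in, your proof is correct.
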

\begin{proof}
$-Y_{\infty}$ can be obtained by $0$-surgery on the left-handed trefoil knot and the 
Poincar\'e  sphere $\Sigma(2,3,5)$ can be obtained by $(-1)$-surgery on the same knot. 
Then the surgery exact triangle of Theorem \ref{twisted-surgery} gives:
\[ \xymatrix{ \hf(S^3)[t,t^{-1}]  \ar[rr]^{F} & & \hf(\Sigma(2,3,5))[t,t^{-1}]
\ar[dl] \\ & \underline{\hf}(-Y_{\infty}) \ar[ul] & } \]
where the horizontal 
map $F$ is induced by a cobordism $W$ constructed by the attachment of a 
$2$-handle with framing $-1$ along the left-handed trefoil knot.
Therefore the integer homology group $H_2(W)$ is generated
by a surface $\widehat{\Sigma}$ with self-intersection $\widehat{\Sigma}^2=-1$.
 The ${\rm Spin}^c$-structures on $W$ are indexed by integers $k$ such
that $\langle c_1(\mathfrak{s}_k), [\widehat{\Sigma}] \rangle = 2k+1$, so
$c_1(\mathfrak{s}_k)^2= -(2k+1)^2$. By Theorem \ref{twisted-surgery}, $F=
\sum \limits_{k \in \Z} F_{W, \mathfrak{s}_k} \otimes t^k$. For any $\spc$-structure 
$\mathfrak{s}_k$ the map $F_{W, \mathfrak{s}_k}$ shifts the degree by 
$$\frac 14 \left (c_1(\mathfrak{s}_{k})^2 -2 \chi(W) -3 \sigma(W) \right ) = -k(k+1).$$ 
Since $-k(k+1) \le 0$,  $\hf(S^3) \cong \Z_{(0)}$ and $\hf(\Sigma(2,3,5)) \cong \Z_{(2)}$, 
the horizontal map is trivial. This implies that $\underline{\hf}(-Y_{\infty}) \cong 
\Z[H^1( -Y_{\infty})] \oplus \Z[H^1( -Y_{\infty})]$. The second homology group of 
$Y_{\infty}$ is generated by an embedded torus, and
therefore the adjunction inequality \cite[Theorem 7.1]{O-Sz:2}, which holds also for
Heegaard Floer homology with twisted coefficients, implies that $\underline{\hf}(-Y_{\infty})$
is concentrated in the trivial $\spc$-structure.  

The Heegaard Floer homology groups 
for $\spc$-structures with torsion first Chern class admit an absolute $\Q$-grading 
\cite[Section 7]{O-Sz:3}, which we are now going to determine for 
$\underline{\hf}(-Y_{\infty})$.
The map $\hf(\Sigma(2,3,5))[t,t^{-1}] \to  \underline{\hf}(- Y_{\infty})$ is induced by a 
$2$-handle attachment. The first Betti number of $\Sigma(2,3,5)$ is smaller than the first 
Betti number of $Y_\infty$, so the handle is attached along a null-homologous knot with 
framing $0$. Then the induced map has degree $- \frac 12$. 

The map $ \underline{\hf}(-Y_{\infty}) \to \hf(S^3)[t,t^{-1}]$  is induced by a $2$-handle 
attachment along a homologically nontrivial knot, so it also has degree 
$- \frac 12$;
see for example \cite[Lemma 3.1]{O-Sz:4}. This implies that 
$$\underline{\hf}(-Y_{\infty}) \cong \Z[H^1( -Y_{\infty})] \oplus \Z[H^1( -Y_{\infty})],$$ 
with one summand in degree $\frac 32$ and the other one in degree 
$\frac 12$. 

The contact invariant $c(\xi_0)$ has degree $- \frac{\theta(\xi_0)}{4} - \frac 12 = \frac 12$
and is a generator of $\underline{\hf}_{1/2}(-Y_{\infty})$ by \cite[Theorem 4.2]{O-Sz:genus}.
\end{proof}

\begin{lemma}[{\cite[Section 3]{plam:1}}] \label{hf2}\label{basis}
For any $n \geq 2$, $\hf_{+1} (-Y_n) \cong \Z^{n-1}$ and the contact invariants 
$c(\eta_{0,-n+2}^n), \ldots, c(\eta_{0,n-2}^n)$ form a basis.
\end{lemma}

The surgery described in Proposition \ref{prop:surgery} produces a cobordism 
$Z_n$ from $Y_{\infty}$ to $Y_n$ which can be decomposed in two different ways:
\begin{itemize}
\item either as a cobordism $W_{\infty}$ from $Y_{\infty}$ to itself followed by a cobordism
$V_{n}$ from $Y_{\infty}$ to $Y_{n}$ if we attach $2$-handles along ${\mathcal C}$
first, and then along ${\mathcal L}$, 
\item or as a cobordisms $V_{n+1}$ from 
$Y_{\infty}$ to $Y_{n+1}$ followed by a cobordism $W_n$ from $Y_{n+1}$ to $Y_n$ if we 
attach $2$-handles along  ${\mathcal L}$ first, and then along ${\mathcal C}$.
\end{itemize}

These cobordisms induce 
maps on Heegaard Floer homology according to Theorem \ref{map-twisted}.
Now we compute the change of the coefficient group for the maps induced by the 
cobordisms above. Let $K(V_n) = \ker \left ( H^2(V_n, Y_{\infty})  \to H^2(V_n) \right )$.  
By the cohomology exact sequence the map $H^1(Y_{\infty}) \to H^2(V_n, Y_{\infty})$ is an 
isomorphism. Therefore $K(V_n) \cong H^1(Y_{\infty})$ and we can identify
$\Z[K(V_n)]$ with $\Z[H^1(Y_{\infty})]$. Here we have used the fact that $Y_n$ is 
an integer homology sphere.
The manifolds $Y_n$ are integer homology spheres, so the groups
$K(W_n)$ are trivial. Moreover $H^2(Z_n,\partial Z_n)= H^2(W_n, \partial W_n) 
\oplus H^2(V_{n+1}, \partial V_{n+1})$ and $H^2(Z_n)= H^2(W_n) \oplus H^2(V_{n+1})$, 
so $K(Z_n) \cong K(W_n) \oplus K(V_{n+1}) \cong K(V_{n+1})$. 

\begin{lemma}\label{trivialconnecting}
The connecting homomorphism $\delta \colon H^1(Y_{\infty}) \to H^2(Z_n)$ in the 
Mayer--Vietoris exact sequence for the decomposition $Z_n=W_{\infty} \cup_{Y_{\infty}} V_n$
is the trivial map.
\end{lemma}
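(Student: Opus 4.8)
The plan is to read $\delta = 0$ directly off the Mayer--Vietoris sequence of the decomposition $Z_n = W_{\infty} \cup_{Y_{\infty}} V_n$. That sequence contains the portion
\[ H^1(W_{\infty}) \oplus H^1(V_n) \xrightarrow{\ \rho\ } H^1(Y_{\infty}) \xrightarrow{\ \delta\ } H^2(Z_n), \]
so $\delta = 0$ is equivalent to the surjectivity of the restriction map $\rho$. I will establish this by computing $H_1$ of the two pieces and invoking universal coefficients.

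First I would dispose of $V_n$. By Proposition \ref{prop:surgery} the knot $\leg$ is smoothly isotopic to $F$, and since $F$ is a section of the torus bundle $Y_{\infty} \to S^1$ its class generates $H_1(Y_{\infty}) \cong \Z$ (this also drops out of the Wang sequence of the bundle, using that $A - I$ is invertible over $\Z$). As $V_n$ is obtained from $Y_{\infty} \times I$ by attaching a $2$-handle along $\leg$, we get $H_1(V_n) = H_1(Y_{\infty})/\langle [\leg] \rangle = 0$, hence $H^1(V_n) = 0$. It therefore remains to show that the restriction $H^1(W_{\infty}) \to H^1(Y_{\infty})$ to the middle copy of $Y_{\infty}$ is surjective.

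For this the essential geometric input is that every component of $\mathcal{C}$ is null-homologous in $Y_{\infty}$. Indeed, in the proof of Proposition \ref{prop:surgery} the curves of $\mathcal{C}$ are introduced as extra positive Dehn twists along curves on a page of a compatible open book, each of which is parallel to a boundary component of the page — this is how the local models of Figure \ref{fig:books} and the braid relation of Lemma \ref{lm:braid} are arranged — and such a curve separates off an annulus from the page, hence bounds in $Y_{\infty}$. Granting this, $W_{\infty}$ is obtained from $Y_{\infty} \times I$ by attaching $2$-handles along null-homologous knots, so $H_1(W_{\infty}) \cong H_1(Y_{\infty}) \cong \Z$; reading the handle decomposition of $W_{\infty}$ from the other end presents it again as $2$-handles attached to $Y_{\infty} \times I$, so the inclusion of either boundary component induces a surjection, hence an isomorphism, on $H_1$, and therefore on $H^1$. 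In particular $\rho$ is surjective and $\delta = 0$.

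The main obstacle is precisely this middle claim — that the components of $\mathcal{C}$ bound in $Y_{\infty}$; once it is in hand the rest is routine homological bookkeeping. One could alternatively route the argument through Lemma \ref{Ksequence}, combining $K(Z_n) \cong K(V_{n+1}) \cong H^1(Y_{\infty})$ and $K(V_n) \cong H^1(Y_{\infty})$ with the computation $K(W_{\infty}) \cong \mathrm{Im}\big(H_2(\partial W_{\infty}) \to H_2(W_{\infty})\big) \cong H^1(Y_{\infty})$ to force the inclusion $K(Z_n) \to \big(K(W_{\infty}) \oplus K(V_n)\big)/H^1(Y_{\infty})$ in that exact sequence to be an isomorphism, whence $\mathrm{Im}(\delta) = 0$; but this still rests on the same fact about $\mathcal{C}$ (now entering through $K(W_{\infty})$), so the direct Mayer--Vietoris route seems cleanest.
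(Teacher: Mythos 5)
Your overall route is different from the paper's and is, in outline, viable: the paper proves the lemma by Poincar\'e--Lefschetz duality, turning $\delta$ into the map $H_2(Y_{\infty}) \to H_2(Z_n, \partial Z_n)$, which vanishes because the generating torus of the middle copy of $Y_{\infty}$ is isotopic across $W_{\infty}$ to a torus fiber in the boundary copy of $Y_{\infty}$; you instead check exactness by showing $H^1(W_{\infty}) \oplus H^1(V_n) \to H^1(Y_{\infty})$ is onto. Your handling of $V_n$ (the attaching circle is smoothly isotopic to the section $F$, which generates $H_1(Y_{\infty}) \cong \Z$ because $A - I$ is invertible) and the upside-down $2$-handle argument showing both boundary inclusions of $W_{\infty}$ are surjective on $H_1$ are correct.

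The genuine gap sits exactly where you put the weight, in the claim that the components of $\mathcal{C}$ are null-homologous in $Y_{\infty}$: the conclusion is true, but your justification is a non sequitur. A curve on a page that is parallel to a boundary component only cobounds an annulus with that binding component, so it represents the \emph{same} class as the binding circle, not the zero class, and binding components of open books are frequently homologically essential (for the annulus open book of $S^1 \times S^2$, a boundary-parallel curve on the page generates $H_1$). Moreover, it is not established in the proof of Proposition \ref{prop:surgery} that the components of $\mathcal{C}$ are boundary-parallel in the page; they are the extra positive twists inserted in the Giroux-torsion region before the braid relation is applied. The fact you actually need, and the one the paper's proof uses, is that each component of $\mathcal{C}$ lies on a torus fiber of $Y_{\infty}$. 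With that input your argument closes at once: by the same Wang-sequence computation you already invoked, invertibility of $A - I$ forces the fiber inclusion to induce the zero map $H_1(T^2) \to H_1(Y_{\infty})$, so any curve contained in a fiber is null-homologous, hence $H_1(W_{\infty}) \cong \Z$ and the rest of your bookkeeping (as well as your alternative through Lemma \ref{Ksequence}) goes through.
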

\begin{proof}
It is  easier to see this by taking the Poincar\'e--Lefschetz duals and looking at 
the associated map in the Mayer--Vietoris exact sequence for relative homology.  There 
$\delta \colon H^1(Y_{\infty}) \to H^2(Z_n)$ becomes 
$i \colon H_2(Y_{\infty}) \to H_2(Z_n, \partial Z_n)$.  This map is trivial because the torus 
generating $H_2(Y_{\infty})$ is homologous (indeed isotopic) to the torus generating
the second homology group of the copy of $Y_{\infty}$ in 
the boundary of $Z_n$.  This can be seen by examining $W_{\infty}$, which is built from 
$Y_{\infty}$ by adding 2-handles along curves, each lying on a torus fiber.  Hence the torus 
fibers in each boundary component of $W_\infty$ are isotopic.
\end{proof}

Lemma \ref{trivialconnecting} and Lemma \ref{Ksequence} imply that $K(V_n)(W_{\infty})
\cong K(Z_n)$, so also $\Z[K(W_{\infty})] \cong \Z[H^1(Y_{\infty})]$. Moreover the cobordism
$W_{\infty}$ satisfies the hypotheses of Lemma \ref{simplifymap}. 
 By Theorem \ref{map-twisted} and Lemma \ref{simplifymap}, $V_n$ and $W_{\infty}$ 
induce maps
\[ F_{V_n, \mathfrak{k}} \colon \hf(-Y_n) \to \underline{\hf}(-Y_{\infty}) \quad
 \text{and} \quad  F_{W_{\infty}, \mathfrak{k}} \colon \underline{\hf}(-Y_{\infty}) 
\to \underline{\hf}(-Y_{\infty}), \]
where the map  $F_{W_{\infty}, \mathfrak{k}}$ is $\Z[H^1(Y_\infty)]$-linear.
By an abuse of notation, we will denote the canonical $\spc$-structure on a symplectic 
cobordism by $\mathfrak{k}$ regardless of what the cobordism or the symplectic form 
are; in fact this will not be very important in our proof.
These maps fit into a diagram:

\begin{equation}\label{diagramma}
\xymatrix{
\hf(-Y_n) \ar^{F_{W_n, \mathfrak{k}}}[rr] \ar^{F_{V_n, \mathfrak{k}}}[d] & 
&\hf(-Y_{n+1}) \ar^{F_{V_{n+1}, \mathfrak{k}}}[d] \\
\underline{\hf}(-Y_{\infty}) \ar^{F_{W_{\infty}, \mathfrak{k}}}[rr] &  &
\underline{\hf}(-Y_{\infty})
}
\end{equation}
which commutes for a suitable choice of the maps in their equivalence class, because 
Lemma \ref{trivialconnecting} implies that the restriction map $\mathfrak{s} \mapsto 
(\mathfrak{s}|_{W_{\infty}}, \mathfrak{s}|_{V_n})$ gives an isomorphism 
$\spc(Z_n) \cong \spc(W_{\infty}) \times \spc(V_n)$, and we have a similar isomorphism
 $\spc(Z_n) \cong \spc(V_{n+1}) \times \spc(W_n)$ because $H^1(Y_n)=0$. 

From now on we will  make a change in the coefficient ring which will allow us to write our
formulas in a more symmetric form. Let $\Lambda= \Z[H^1(Y_{\infty}, \frac 12 \Z)]$ with
the $\Z[H^1(Y_{\infty})]$-module structure defined by the inclusion $H^1(Y_{\infty}) \subset
H^1(Y_{\infty}, \frac 12 \Z)$. Since $\Lambda$ is a free module over $\Z[H^1(Y_{\infty})]$, we
have 
\[ \underline{\hf}(- Y_{\infty}; \Lambda) \cong \underline{\hf}(- Y_{\infty}) 
\otimes_{\Z[H^1(- Y_{\infty})]} \Lambda \cong \Lambda_{\left (\frac 12 \right )} \oplus 
\Lambda_{\left ( \frac 32 \right )}. \]
We choose an identification $\underline{\hf}_{\frac 12}(- Y_{\infty}; \Lambda) \cong 
\Lambda$ such that $[c(\xi_0)]$ corresponds to $[1]$.

\begin{lemma} \label{pluto} We can choose a representative of $F_{V_n}$,
an identification of  $\underline{\hf}_{(\frac 12)}(-Y_{\infty}; \Lambda)$ with 
$\Z[t^{\pm \frac 12}]$, and signs for the contact invariants $c(\eta_{0,j})$ such that 
 \[ F_{V_n, \mathfrak{k}}(c(\eta_{0,j}^n))= t^{j/2}.\] 
\end{lemma}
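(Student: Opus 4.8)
The idea is to compute the map $F_{V_n}$ on the contact invariants $c(\eta_{0,j}^n)$ directly, exploiting the fact that the relevant contact structures sit at the bottom of the triangle and are obtained by Legendrian surgery. First I would recall from Proposition~\ref{prop:surgery} and its proof that $(Y_n,\eta_{0,j}^n)$ is obtained from $(Y_\infty,\xi_1)$ — more precisely from $(Y_\infty,\xi_0)$ after the relevant bookkeeping, since $i=0$ corresponds to a Stein fillable structure — by Legendrian surgery along the link realizing $\leg_{l,r}\cup\mathcal C$ with $l-r=j$. Actually, for the bottom row the cobordism $V_n$ is simply the one built by attaching the $2$-handle along the Legendrian knot $\leg_{l,r}=F_{0,j}$ in $(Y_\infty,\xi_0)$, which is a Stein cobordism. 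Theorem~\ref{natural} then gives immediately that $F_{V_n,\mathfrak k}(c(\xi_0,\Lambda(V_n)))=c(\eta_{0,j}^n,\Lambda)$ up to sign and units, and that the maps for the non-canonical $\spc$-structures kill the contact class; combined with Lemma~\ref{hf1} this already shows $F_{V_n}(c(\eta_{0,j}^n))$ lands in $\underline{\hf}_{1/2}(-Y_\infty,\Lambda)\cong\Lambda$ as a unit multiple of a fixed generator.

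The content of the lemma is therefore the identification of \emph{which} unit: I would show that the $\spc$-structure $\mathfrak k$ on $V_n$ restricts to the canonical structure on $Y_\infty$ and that, after tensoring up to the coefficient ring $\Lambda=\Z[H^1(Y_\infty,\tfrac12\Z)]$, the induced change-of-coefficients isomorphism $\Lambda(V_n)\cong\Lambda$ from Remark~\ref{simplifymap} sends the relevant generator to $t^{j/2}$ rather than to $1$. The point is that the $2$-handle in $V_n$ is attached along a knot which is homologically the regular fibre (the core of $F$), and the rotation number data $\mathrm{rot}(F_{0,j},\xi_0)=j$ enters through the first Chern class of the Stein structure: the difference between two such surgeries for different $j$ is exactly a twist by $e^{\text{(something proportional to }j)}$ in the group ring. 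Concretely I would compute $K(V_n)=\ker(H^2(V_n,Y_\infty)\to H^2(V_n))\cong H^1(Y_\infty)$ as already noted, track how the generator of $H_2(V_n)$ — a capped-off Seifert surface for $F_{0,j}$ — pairs against the canonical class, and read off from Theorem~\ref{map-twisted} the exponent; the half-integer coefficients are introduced precisely so that $c(\eta_{0,j}^n)\mapsto t^{j/2}$ symmetrically in $j$ (note $j\equiv n\pmod 2$, so $j/2$ may be a genuine half-integer, which is why we passed to $\Lambda$).

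The main obstacle, and the step requiring the most care, will be pinning down the exact power of $t$ — i.e.\ verifying the coefficient is $t^{j/2}$ and not, say, $t^{(j-j_0)/2}$ for some shift $j_0$, or some other normalization. This is genuinely a choice-of-conventions computation: I would argue that one is always free to absorb an overall unit by the clause ``we can choose \dots an identification \dots and signs'' in the statement, so the real assertion is only that \emph{consecutive} values of $j$ (differing by $2$, as forced by $j\equiv n-i\bmod 2$) produce answers differing by exactly $t$. To see this I would compare $\eta_{0,j}^n$ and $\eta_{0,j+2}^n$: their Legendrian surgery presentations differ by moving one stabilization from one cusp to the other (Figure~\ref{legsurg.fig}), which changes the rotation number by $2$ and hence changes the restriction of the canonical $\spc$-structure on the cobordism $V_n$ by a generator of $H^1(Y_\infty)\cong\Z$; under the identification of Remark~\ref{simplifymap} this is multiplication by $t$. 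Once the relative statement is established, fixing the base case $j$ minimal (or using the symmetry $j\leftrightarrow -j$ coming from conjugation, Theorem~\ref{coniugazione}, together with $\mathfrak J(t^k)=t^{-k}$) forces the symmetric normalization $t^{j/2}$, and choosing signs compatibly finishes the proof.
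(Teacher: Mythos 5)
Your reduction of the lemma to a relative statement is the right way to frame it: since the representative of $F_{V_n,\mathfrak k}$ and the signs of the $c(\eta_{0,j}^n)$ are free, and since Theorem \ref{natural} together with Lemma \ref{hf1} shows that each image $F_{V_n}(c(\eta_{0,j}^n))$ is a unit, i.e.\ of the form $\pm t^{m_j/2}$, the entire content is that $m_{j+2}-m_j=2$; the overall shift is then absorbed by re-choosing the representative, so the conjugation argument via Theorem \ref{coniugazione} is not even needed. The gap is in the mechanism you propose for this relative step. The rotation number $j$ is simply not visible in the $\spc$-structure on $V_n$: the knot $F$ generates $H_1(Y_\infty)\cong\Z$, so $F_{0,j}$ bounds no surface in $V_n$ and the ``capped-off Seifert surface'' you want to pair against $c_1$ does not exist there; in fact $H_2(V_n)\cong\Z$ is generated by the torus fibre $T$, on which $c_1$ of every canonical $\spc$-structure vanishes (because $c_1(\xi_0)=0$), so the canonical $\spc$-structures for the different Stein structures indexed by $j$ all agree on $V_n$ and carry no $j$-information. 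Moreover Remark \ref{simplifymap} does not apply to $V_n$ at all, since $H_2(Y_n)=0$ while $H_2(Y_\infty)\cong\Z$. Hence the claim that ``moving a stabilization changes the restriction of the canonical $\spc$-structure on $V_n$ by a generator of $H^1(Y_\infty)$, which is multiplication by $t$'' has no content as stated, and nothing in your outline actually couples the exponent of $t$ to $j$.

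What is missing is precisely the device the paper uses: cap off with the cobordism $V_\infty$ (the $0$-framed $2$-handle on the trefoil, read from $-Y_\infty$ to $S^3$) and work in the closed-up Stein filling $X_n=V_\infty\cup_{Y_\infty}V_n$ of Figure \ref{legsurg.fig}. Only in $X_n$ does the rotation number become a Chern number: there is a sphere class $S$ with $S\cdot T=1$ and $\langle c_1(\mathfrak s_j),S\rangle=j$. The twisted composition formula (Theorem \ref{twisted-composition}) gives $F_{X_n,\mathfrak s_k}=\Pi\circ F_{V_\infty}\circ t^{-k/2}\cdot F_{V_n}$, so that, after identifying $F_{V_\infty}$ with the conjugation $t\mapsto t^{-1}$, the map $F_{X_n,\mathfrak s_k}$ extracts the coefficient of $t^{k/2}$ in $F_{V_n}(c(\eta_{0,j}^n))$; Plamenevskaya's computation that $F_{X_n,\mathfrak s_k}(c(\eta_{0,j}^n))$ equals $c(\xi_{st})$ for $k=j$ and vanishes for $k\ne j$ then forces $F_{V_n}(c(\eta_{0,j}^n))=t^{j/2}$ up to the allowed choices. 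If you prefer your relative formulation (comparing $j$ and $j+2$), you would still need this capping and composition step, or an equivalent way of turning the stabilization data into a $\spc$-structure on a cobordism with the relevant closed surface class; the $\spc$-structures on $V_n$ alone cannot do it.
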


\begin{proof}
Let us view the $4$-manifold $V_{\infty}$ used in the proof of Lemma \ref{homotopy1}, 
constructed by adding a 
$2$-handle to $D^4$ along the right-handed trefoil knot in Figure \ref{topsurg.fig} with
attaching framing $0$, as a cobordism from $S^3$ to $Y_{\infty}$  and let 
$X_n= V_{\infty} \cup_{Y_{\infty}} V_n$. The second homology group of $X_n$ is generated 
by the class $T$ of a torus fiber  in $Y_{\infty}$ and by the class of a sphere $S$ such that 
$S \cdot T=1$. 

Let $\mathfrak{s}_j$ be the $\spc$-structure on $X_n$ such that
$$\langle c_1(\mathfrak{s}_j), T \rangle =0 \quad \text{and} \quad 
\langle c_1(\mathfrak{s}_j), S \rangle =j \text{ with } j \equiv n  (\text{ mod } 2).$$ 
The restriction of all the $\spc$-structures $\mathfrak{s}_j$ to $V_\infty$ and $V_n$ 
coincide, so there is a generator $h$ of $H^1(Y_{\infty})$ such that 
$\mathfrak{s}_{j+2}= \mathfrak{s}_j + \delta(h)$.  

We denote $\mathfrak{s}_j|_{V_\infty}= \mathfrak{h}$, 
$\mathfrak{s}_j|_{V_n}= \mathfrak{k}$ (in fact one can verify that the $\spc$-structure 
$\mathfrak{s}_j|_{V_n}$ coincide with the $\spc$-structure in Diagram (\ref{diagramma})) 
and identify $\Lambda$ with $\Z[t^{\pm 1/2}]$ by sending $e^h$ to $t$.
By the composition formula in Theorem \ref{twisted-composition} we can choose 
$F_{V_n,\mathfrak{k}}$ 
such that
\[F_{X_n, \mathfrak{s}_j} = \Pi \circ F_{V_{\infty}, \mathfrak{h}} \circ t^{-j/2} \cdot 
F_{V_n, \mathfrak{k}}. \]
In fact we choose $F_{V_n,\mathfrak{k}}$ such that $F_{X_n, \mathfrak{s}_{-n}} = \Pi \circ 
F_{V_{\infty}, \mathfrak{h}} \circ t^{n/2} \cdot F_{V_n, \mathfrak{k}}$ and the formula follows from
Equation (\ref{eqn: twisted composition}) because $ \mathfrak{s}_j = \mathfrak{s}_{-n} +
\frac{j+n}{2} \delta(h)$.

The restriction map $H^2(V_{\infty}; \Z) \to H^2(Y_{\infty}; \Z)$ is an
isomorphism and $H^1(V_{\infty})=0$, so $K(V_{\infty}) \cong H^1(Y_{\infty})$, and then
$V_{\infty}$ induces an anti-$\Lambda$-linear map 
\[ F_{V_{\infty}} \colon \underline{\hf}(-Y_{\infty}; \Lambda) \to 
\hf(S^3)[t^{1/2},t^{-1/2}]. \]

Since the right-handed trefoil knot has a Legendrian representative with 
Thurston--Bennequin invariant $+1$, $V_{\infty}$ can be endowed with a Stein 
structure providing a Stein cobordism from $(S^3, \xi_{st})$ to $(Y_{\infty}, \xi_0)$
with canonical $\spc$-structure $\mathfrak{h}$,
so $[F_{V_{\infty}, \mathfrak{h}}(c(\xi_0))]= [c(\xi_{st})] $. Then, after identifying
both $\underline{\hf}_{\frac 12}(-Y_{\infty}; \Lambda)$ and $\hf(S^3)[t^{1/2},t^{-1/2}]$ with 
$\Z[t^{\pm 1/2}]$, we can choose $F_{V_{\infty},\mathfrak{h}}$ to be the conjugation map 
$t \mapsto t^{-1}$.


The $\spc$-structure $\mathfrak{s}_j$ on $X_n$ is the canonical 
${\rm Spin}^c$-structure of the Stein filling $(X_n, J_n)$ of $\eta_{0,j}^n$ described 
by the Legendrian surgery diagram in Figure \ref{legsurg.fig}. Then we know from 
\cite{plam:1} that $F_{X_n, \mathfrak{s}_j} (c(\eta_{0,j}^m))= c(\xi_{st})$, and 
$F_{X_n, \mathfrak{s}_k} (c(\eta_{0,j}^n))= 0$ for $k \ne j$.
 Using the composition formula in Theorem \ref{twisted-composition} and the fact 
that $F_{V_{\infty}, \mathfrak{h}}$ is, in our choice of identifications, the 
conjugation map, we conclude that
$F_{V_n, \mathfrak{k}}(c(\eta_{0,j}^n))=t^{j/2}$. 
\end{proof}

\begin{figure}[ht] \centering
\psfrag{a}{\footnotesize $\frac{n-j}{2}$ cusps}
\psfrag{b}{\footnotesize $\frac{n+j}{2}$ cusps}
\includegraphics[width=6cm]{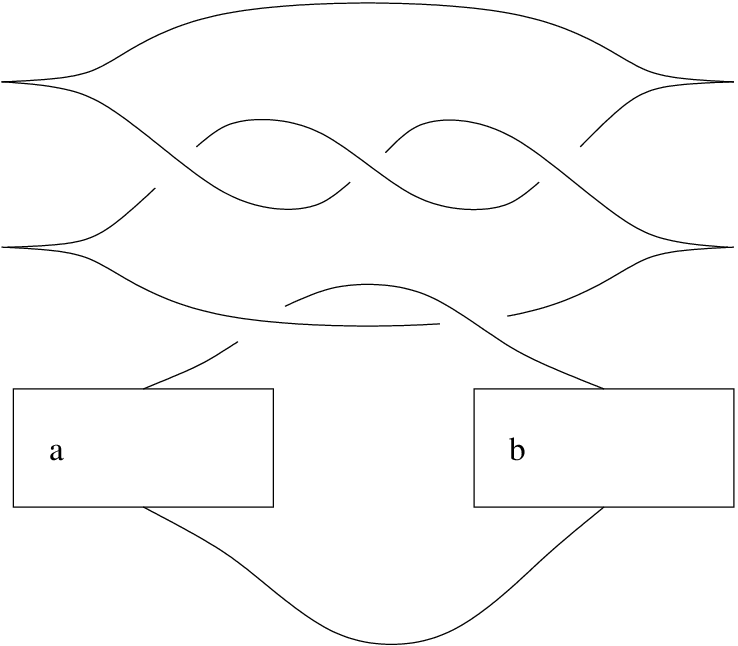}
\caption{Legendrian surgery presentation of $(Y_n, \eta_{0,j}^n)$.}
\label{legsurg.fig}
\end{figure}

Now we choose the maps in Diagram (\ref{diagramma}) so that it becomes commutative.
The horizontal map in the upper part  is fixed because the 
$Y_n$ are integer homology spheres, while the vertical maps are fixed by the choices in 
Lemma \ref{pluto}.
\begin{lemma}
If we choose $F_{V_n, \mathfrak{k}}$  such that 
$F_{V_n,  \mathfrak{k}}(c(\eta_{0,j}^n))= t^{j/2}$ for all $n$, then Diagram
(\ref{diagramma}) commutes  if we choose the map 
$F_{W_{\infty}, \mathfrak{k}}$  to be represented by the 
multiplication by $t^{\frac 12}-t^{- \frac 12}$. 
\end{lemma}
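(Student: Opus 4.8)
The plan is to compute both composite maps around Diagram (\ref{diagramma}) directly from the factorization $Z_n = W_\infty \cup V_n$ provided by Proposition \ref{prop:surgery}, and to identify the bottom horizontal arrow by comparing it against the two Stein cobordisms in the factorization that are already understood. First I would use the fact that the $Y_n$ are integer homology spheres, so $\widehat{HF}_{+1}(-Y_n)\cong \Z^{n-1}$ with the $c(\eta^n_{0,j})$ a basis (Lemma \ref{hf2}): by linearity it suffices to check the commuting square on these basis elements. Now I would chase $c(\eta^n_{0,j})$ around the diagram. Down the left edge, Lemma \ref{pluto} gives $F_{V_n}(c(\eta^n_{0,j}))=t^{j/2}$. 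Along the top, Theorem \ref{natural} applied to the Stein cobordism $W_n$ (whose underlying surgery is along $\mathcal C$, leaving the image of $\leg_{l,r}$ in $\xi_{i+1}$ identified with $\leg_{l,r}$ in $\xi_i$) together with the bottom-row structure forces $F_{W_n,\mathfrak k}(c(\eta^n_{0,j}))$ to be a $\Z$-linear combination of bottom-row classes $c(\eta^{n+1}_{0,l})$; then $F_{V_{n+1}}$ sends this to the corresponding combination of $t^{l/2}$. So the problem reduces to: identifying $F_{W_\infty}$ on the one-dimensional piece $\underline{\widehat{HF}}_{1/2}(-Y_\infty,\Lambda)$, and checking the two sides agree after applying it to $t^{j/2}$.

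The heart of the argument is the computation of $F_{W_\infty}$. Here I would use the composition formula of Theorem \ref{twisted-composition} applied to the other factorization $Z_n = V_{n+1}\cup W_n$, or more efficiently apply it to $W_\infty$ itself viewed as a composite of the basic-slice/Giroux-torsion cobordisms appearing in the proof of Proposition \ref{prop:surgery} together with the known twisted contact invariants of the $\xi_i$ computed in \cite{ghiggini-honda:1}. The key input is that, by Theorem \ref{natural}, the canonical $\spc$-structure component of the cobordism map induced by $W_\infty$ (a Stein cobordism from $(Y_\infty,\xi_{i+1})$ built from $\mathcal C$, realized as the opposite cobordism) sends $\underline c(\xi_{i+1})$ to $\underline c(\xi_i)$ — but after passing to $\Lambda$-coefficients and tracking the $\spc$-structure sum, the two adjacent $\spc$-structures contribute with relative weight $t^{1/2}$ and $t^{-1/2}$ respectively, because the torus class $T$ generates $H_2$ and the relevant evaluations of $c_1$ differ by $1$ (exactly as in the index computation $\langle c_1(\mathfrak s_k),\widehat\Sigma\rangle = 2k+1$ in Lemma \ref{hf1} and the parametrization $\mathfrak s_{j+1}=\mathfrak s_j+\delta(h)$, $e^h\mapsto t$, in Lemma \ref{pluto}). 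Summing these two contributions with the sign dictated by Theorem \ref{natural} (the off-canonical $\spc$-structures killing the contact element, the two surviving ones entering with opposite signs after the conjugation normalization of $F_{V_\infty}$) yields multiplication by $t^{1/2}-t^{-1/2}$.

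Finally I would verify consistency: with $F_{W_\infty} = (t^{1/2}-t^{-1/2})\cdot$, going down-then-across sends $c(\eta^n_{0,j})\mapsto t^{j/2}\mapsto t^{(j+1)/2}-t^{(j-1)/2}$, and this must equal $F_{V_{n+1}}\bigl(F_{W_n,\mathfrak k}(c(\eta^n_{0,j}))\bigr)$; since the right side is a $\Z$-combination of $t^{l/2}$ with $l\equiv n+1\pmod 2$ and $|l|\le n-1$, matching coefficients pins down $F_{W_n,\mathfrak k}(c(\eta^n_{0,j})) = c(\eta^{n+1}_{0,j+1}) - c(\eta^{n+1}_{0,j-1})$, which is consistent with Theorem \ref{natural} and the stabilization picture (the knot $\leg_{l,r}$ undergoes a positive/negative stabilization, changing the rotation number by $\pm 1$, cf. Lemma \ref{lm:zig zag}). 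I expect the main obstacle to be the bookkeeping of the $\spc$-structure sum through the composition formula — in particular pinning down the \emph{sign} with which the two surviving $\spc$-structures on $W_\infty$ contribute, so that one really gets $t^{1/2}-t^{-1/2}$ and not $t^{1/2}+t^{-1/2}$; this is where the normalization of $F_{V_\infty}$ as the conjugation map $t\mapsto t^{-1}$ (from Lemma \ref{pluto}) and the anti-linearity in Theorem \ref{map-twisted} must be used carefully.
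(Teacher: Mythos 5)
Your plan correctly identifies the two external inputs (the twisted invariants of the $\xi_i$ from \cite{ghiggini-honda:1} and the normalizations of Lemma \ref{pluto}), but the mechanism you propose for producing the factor $t^{1/2}-t^{-1/2}$ does not work as stated. The map $F_{W_\infty}$ in Diagram (\ref{diagramma}) is the map associated to the single canonical $\spc$-structure $\mathfrak{k}$ on $W_\infty$; it is not a $t$-weighted sum over $\spc$-structures (such sums occur in the surgery triangle of Theorem \ref{twisted-surgery}, not here), and Theorem \ref{natural} says precisely that every non-canonical $\spc$-structure annihilates the contact class, so there are not ``two surviving $\spc$-structures entering with opposite signs'' --- your description is internally inconsistent on this point. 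What is true, and what the paper actually uses, is that $W_\infty$ is the Stein cobordism realizing the Lutz-twist relation between $\xi_1$ and $\xi_0$, so $F_{W_\infty}(c(\xi_0))=c(\xi_1)$ up to sign and units, and the Ghiggini--Honda theorem gives $[c(\xi_1)]=[(t-1)\,c(\xi_0)]$; with $c(\xi_0)$ identified with $1$ this shows only that $F_{W_\infty}$ is multiplication by $\pm t^{k/2}(t-1)$ for some undetermined $k$. The factor $t-1$ comes from that theorem, not from $\spc$-bookkeeping in Theorem \ref{natural}.

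The genuinely missing step is the normalization: which unit $\pm t^{k/2}$ gives the representative that makes (\ref{diagramma}) commute once $F_{V_n}$ is fixed as in Lemma \ref{pluto}. This cannot be settled by ``careful bookkeeping'' inside Theorems \ref{map-twisted} and \ref{natural}, because those maps are only defined up to sign and multiplication by invertible elements of the group ring --- exactly the ambiguity at stake. Your closing paragraph is circular here: you assume commutativity with $t^{1/2}-t^{-1/2}$, deduce $F_{W_n}(c(\eta_{0,j}^n))=c(\eta_{0,j+1}^{n+1})-c(\eta_{0,j-1}^{n+1})$ (which is Equation (\ref{secondmain}), a consequence of the lemma), and then call this ``consistent.'' The paper pins down the unit by a symmetry argument you never invoke: conjugation of contact structures fixes $\eta_{n-2,0}^n$ and exchanges $\eta_{0,j}^n$ with $\eta_{0,-j}^n$ (cf. Theorem \ref{coniugazione}), so $F_{V_n}(c(\eta_{n-2,0}^n))$, and likewise the image of $c(\eta_{n-2,0}^n)$ under the top-then-right composite, are Laurent polynomials invariant under $t^{1/2}\mapsto t^{-1/2}$; compatibility of multiplication by $\pm t^{k/2}(t-1)$ with this symmetry singles out $t^{1/2}-t^{-1/2}$. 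Without this step (or some substitute, e.g.\ a direct model computation fixing the unit), your argument proves the lemma only up to an undetermined unit $\pm t^{k/2}$, which is not enough for the induction in Theorem \ref{main}.
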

\begin{proof}
The contact structure $\xi_1$ is obtained from $\xi_0$ by a generalized Lutz twist,  
so $[c(\xi_1)]= [(t-1) c(x_0)]$ by \cite[Theorem 2]{ghiggini-honda:1}. This implies
that $F_{W_{\infty},  \mathfrak{k}}$ is the multiplication by $(t-1)t^{k/2}$ for some $k \in \Z$.
 We will now determine which choice for $F_{W_{\infty}, \mathfrak{k}}$
will make Diagram (\ref{diagramma}) commutative.

Inverting the orientation of the contact planes results in a symmetry of the triangle 
(\ref{pascal}) about its vertical axis. In particular the contact structure $\eta_{n-2,0}^n$ is 
invariant under conjugation, and $\eta_{0,j}^n$ is conjugated to $\eta_{0,-j}^n$;
see \cite[Proposition 3.8]{ghiggini:4}, where $\eta_{n-2,0}^n$ is called $\eta_0$, and 
$\eta_{0,j}^n$ is called $\eta_j$. In the reference only odd $n$ are considered, but the 
proof carries through in general. By Lemma \ref{basis} we know that $c(\eta_{n-2,0}^n)$ 
can be expressed as a linear combination of the elements $c(\eta_{0,i}^n)$. The invariance of
$c(\eta_{n-2,0}^n)$ by conjugation implies that 
$c(\eta_{n-2,0}^n)= a_{1-n} c(\eta_{0,1-n}^n) + \ldots + a_{n-1} c(\eta_{0,n-1}^n)$ with 
$a_j = a_{-j}$. hence  $F_{V_n, \mathfrak{k}}(c(\eta_{n-2,0}^n))$ is a symmetric Laurent 
polynomial in the variable $t^{1/2}$ because it is invariant under the automorphism 
$t^{1/2} \mapsto t^{-1/2}$. 

Since $F_{V_{n+1}, \mathfrak{k}}(c(\eta_{n-2,0}^n))= c(\eta_{n-1,0}^{n+1})$, the composite
$F_{V_{n+1}, \mathfrak{k}} \circ F_{W_n,\mathfrak{k}}$ maps $c(\eta_{n-2,0}^n)$ to a symmetric
Laurent polynomial. If Diagram (\ref{diagramma}) commutes, then 
$F_{W_{\infty}, \mathfrak{k}}$ maps symmetric Laurent polynomials to symmetric Laurent 
polynomials and therefore it must be the multiplication by $t^{1/2}-t^{- 1/2}$.
\end{proof}

\begin{proof}[Proof of Theorem \ref{main}]
The theorem will be proved by induction on $n$. The initial step is $n=2$. Since 
 there is a unique tight contact structure on $Y_2$ by 
\cite[Theorem 4.9]{ghiggini-schonenberger},  
there is nothing to prove in this case. 
Now we assume that Formula (\ref{f:main}) holds for the tight contact structures on 
$Y_n$, for some $n$, and we prove that this implies that Formula (\ref{f:main}) holds for 
the tight contact structures on $Y_{n+1}$. From the surgery construction we have
\[ F_{W_n,  \mathfrak{k}}(c(\eta_{i,j}^n)) = c(\eta_{i+1, j}^{n+1}), \]
and the induction hypothesis gives, on $Y_{n+1}$,
the following expression for the contact invariants of $\eta_{i,j}^{n+1}$ for $i \ge 1$ in 
terms of the contact invariants of $\eta_{1,j}^{n+1}$:
\begin{equation} \label{induction}
  c(\eta_{i+1,j}^{n+1})= \sum \limits_{k=0}^i (-1)^k  \binom{i}{k} c(\eta_{1,j-i+2k}^{n+1}). 
\end{equation}

We can compute $c(\eta_{1,j}^{n+1})$ by the the commutativity of Diagram 
\ref{diagramma}: in fact
\[ F_{V_{n+1}, \mathfrak{k}}(c(\eta_{1,j}^{n+1}))= 
F_{W_{\infty}, \mathfrak{k}}(F_{V_n,  \mathfrak{k}}(c(\eta_{0,j}^n))) =
t^{(j+1)/2} - t^{(j-1)/2}, \]
and therefore 
\begin{equation} \label{secondmain}
c(\eta_{1,j}^{n+1})= c(\eta_{0,j+1}^{n+1})-c(\eta_{0,j-1}^{n+1})
\end{equation}
because the map $F_{V_{n+1}, \mathfrak{k}}$ is injective.

If we substitute $c(\eta_{1,j}^{n+1})$ in Equation \ref{induction} with the right-hand side of 
Equation \ref{secondmain}, and write $j-i+1+2k= j-(i+1)+2(k+1)$, we obtain:

\begin{align*}
& c(\eta_{i+1,j}^{n+1}) = \sum \limits_{k=0}^{i} (-1)^k \binom{i}{k} ( 
c(\eta_{0,j-(i+1)+2(k+1)}^{n+1}) - c(\eta_{0,j-(i+1)+2k}^{n+1})) \\
& = \sum \limits_{k=1}^{i+1} (-1)^{k-1} \binom{i}{k-1} c(\eta_{0,j-(i+1)+2k}^{n+1}) - 
\sum \limits_{k=0}^{i} (-1)^k \binom{i}{k} c(\eta_{0,j-(i+1)+2k}^{n+1})\\
& = \sum \limits_{k=0}^{i+1} (-1)^{k-1} \left [ \binom{i}{k} + \binom{i}{k-1} \right ] 
c(\eta_{0,j-(i+1)+2k}^{n+1}) \\
& = \sum \limits_{k=0}^{i+1} (-1)^{k-1} \binom{i+1}{k} c(\eta_{0,j-(i+1)+2k})
\end{align*}
from which the statement of Theorem \ref{main} follows.
\end{proof}


\end{document}